\newtheorem{theorem}{Theorem}[section]
\newtheorem{lemma}[theorem]{Lemma}
\newtheorem{proposition}[theorem]{Proposition}
\newtheorem{corollary}[theorem]{Corollary}
\theoremstyle{definition}
\newtheorem{definition}[theorem]{Definition}
\newtheorem{remark}[theorem]{Remark}
\newtheorem{example}[theorem]{Example}
\definecolor{Cerulean}{cmyk}{0.94,0.11,0,0}
\definecolor{LimeGreen}{cmyk}{0.50,0,1,0}
\definecolor{NavyBlue}{cmyk}{0.94,0.54,0,0}
\definecolor{RawSienna}{cmyk}{0,0.72,1,0.45}
\definecolor{applegreen}{rgb}{0.55,0.71,0.0}
\definecolor{darkblue}{rgb}{0.0, 0.0, 1}
\newcommand{\bbZ}{\mathbb{Z}}
\newcommand{\calA}{\mathcal{A}}
\newcommand{\calLP}{\mathcal{LP}}
\newcommand{\calP}{\mathcal{P}}
\newcommand{\calT}{\mathcal{T}}
\newcommand{\ttCat}{\mathtt{Cat}}
\newcommand{\ttNar}{\mathtt{Nar}}
\newcommand{\ttSch}{\mathtt{Sch}}
\newcommand{\ttsch}{\mathtt{sch}}
\newcommand{\inv}{^{-1}}
\tikzstyle{vertex}=[circle, draw=black, inner sep=0pt, minimum size=5pt]
\newcommand{\vertex}{\node[vertex]}
\title{Schr\"oder combinatorics and $\nu$-associahedra}
\author[von Bell]{Matias von Bell}
\author[Yip]{Martha Yip}
\email[von Bell]{matias.vonbell@uky.edu}
\email[Yip]{martha.yip@uky.edu}
\address{Department of Mathematics, University of Kentucky, Lexington KY 40506}
\date{\today}
\begin{document}
\maketitle

\begin{abstract}
We study $\nu$-Schr\"oder paths, which are Schr\"oder paths which stay weakly above a given lattice path $\nu$. Some classical bijective and enumerative results are extended to the $\nu$-setting, including the relationship between small and large Schr\"oder paths. We introduce two posets of $\nu$-Schr\"oder objects, namely $\nu$-Schr\"oder paths and trees, and show that they are isomorphic to the face poset of the $\nu$-associahedron $A_\nu$ introduced by Ceballos, Padrol and Sarmiento. A consequence of our results is that the $i$-dimensional faces of $A_\nu$ are indexed by $\nu$-Schr\"oder paths with $i$ diagonal steps, and we obtain a closed-form expression for these Schr\"oder numbers in the special case when $\nu$ is a `rational' lattice path. Using our new description of the face poset of $A_\nu$, we apply discrete Morse theory to show that $A_\nu$ is contractible. This yields one of two proofs presented for the fact that the Euler characteristic of $A_\nu$ is one.  A second proof of this is obtained via a formula for the $\nu$-Narayana polynomial in terms of $\nu$-Schr\"oder numbers.
\vspace{0.5cm}

\noindent {\bf Keywords}: Schr\"oder paths, Schr\"oder trees, $\nu$-associahedron, face poset, Morse matching.
\end{abstract}

\section{Introduction}
The $n$-dimensional {\em associahedron} is a simple polytope whose face poset is isomorphic to the poset of diagonal dissections of a convex $(n+3)$-gon ordered by coarsening, so that the minimal elements of the poset are the triangulations of the $(n+3)$-gon and the maximal element is the empty dissection.
A well-known equivalent statement is that the face poset of the $n$-associahedron is isomorphic to the poset of Schr\"oder paths from $(0,0)$ to $(n+1,n+1)$.
The number of elements in the poset of Schr\"oder paths is known as a {\em Schr\"oder--Hipparchus number} or a {\em small Schr\"oder number}.
This poset is graded by the number of diagonal steps in a Schr\"oder path, so the number of Schr\"oder paths with $i$ diagonal steps is the number of faces of the associahedron of dimension $i$.  
In particular, the vertices of the $n$-associahedron correspond to the Schr\"oder paths with no diagonal steps, which are better known as {\em Dyck paths}.
The {\em Tamari lattice} is a partial order on the set of Dyck paths, and a notable result~\cite{Hai84, Lee89} is that its Hasse diagram is realizable as the $1$-skeleton of the $n$-associahedron.

With the viewpoint that the set of Dyck paths is the set of lattice paths that lie weakly above the staircase path $(NE)^{n+1}$ from $(0,0)$ to $(n+1,n+1)$, Pr\'eville-Ratelle and Viennot~\cite{PV17} extended the notion of the Tamari lattice to the $\nu$-Tamari lattice, a partial order on the set of {\em $\nu$-Dyck paths}, which are lattice paths that lie weakly above a fixed lattice path $\nu$ from $(0,0)$ to $(b,a)$.
A striking result of Ceballos, Padrol and Sarmiento~\cite[Theorem 5.2]{CPS19} is that the Hasse diagram of the $\nu$-Tamari lattice is realizable as the $1$-skeleton of a polyhedral complex induced by an arrangement of tropical hyperplanes. 
This polyhedral complex is the {\em $\nu$-associahedron} $A_\nu$.

Inspired by the connection between Schr\"oder paths and faces of the associahedron, we define $\nu$-Schr\"oder paths (see Definition~\ref{defn.nuschroderpath}) and make a similar connection to the faces of the $\nu$-associahedron.
In this article we also consider another family of Schr\"oder objects: $\nu$-Schr\"oder trees (see Definition~\ref{defn.nutrees}) are a generalization of the $\nu$-trees of Ceballos, Padrol and Sarmiento~\cite{CPS20}. 

The face poset of $A_\nu$ is defined in~\cite{CPS19} to be the poset on {\em covering $(I,\overline{J})$-forests}, which may be considered as a $\nu$-analogue of non-crossing partitions. 
A central result of this article is the following pair of alternative descriptions of the face poset of $A_\nu$.

\noindent{\bf Theorem~\ref{thm:isoposets}.} 
The face poset of the $\nu$-associahedron is isomorphic to the poset on $\nu$-Schr\"oder paths, and to the poset on $\nu$-Schr\"oder trees.

\noindent{\bf Corollary~\ref{cor:isoposets}.}
The number of $i$-dimensional faces of the $\nu$-associahedron is the number of $\nu$-Schr\"oder paths with $i$ diagonal steps.

This is the outcome of combining bijections between covering $(I,\overline{J})$-trees, $\nu$-Schr\"oder trees, and $\nu$-Schr\"oder paths from {\bf Theorem~\ref{thm.coveringforesttreebijection}} and {\bf Theorem~\ref{treePosetIsPathPoset}}, and showing how the cover relation on the covering $(I,\overline{J})$-trees is translated to cover relations for the other $\nu$-Schr\"oder objects. 
Figures~\ref{fig:53poset} and~\ref{fig.35associahedron} show an example of a $\nu$-associahedron and its face poset in terms of $\nu$-Schr\"oder paths.

Combining Theorem~\ref{thm:isoposets} with the fact that $A_\nu$ is a polyhedral complex, we conclude in {\bf Theorem~\ref{thm:lattice}} that if $P_\nu$ is the contraction poset of $\nu$-Schr\"oder paths or the poset of $\nu$-Schr\"oder trees, then every interval in $P_\nu$ is an Eulerian lattice. As a consequece of results by Pr\'eville-Ratelle and Viennot \cite{PV17} we conclude in {\bf Corollary~\ref{cor:sublattice}} that for non-classical $\nu$, adjoining a minimal and a maximal element to $P_\nu$ yields a sublattice in the face lattice of a classical associahedron. 

From Theorem~\ref{thm:isoposets}, it also follows that the Euler characteristic of the $\nu$-associahedron is $\chi(A_\nu)=\sum_{i\geq0} (-1)^i \ttsch_\nu(i)$, where $\ttsch_\nu(i)$ is the number of $\nu$-Schr\"oder paths with $i$ diagonal steps.
We present two proofs of the fact that $\chi(A_\nu)=1$; one enumerative and one topological.
The enumerative proof relies on the {\em $\nu$-Narayana polynomial} $N_\nu(x)$ (see Definition~\ref{defn.narayana}), which is a generating function for $\nu$-Dyck paths with respect to their {\em valleys}.  
In {\bf Proposition~\ref{prop.narayana}}, we show that $N_\nu(x+1)=\sum_{i\geq0}\ttsch_\nu(i)x^i$, from which it follows that $\chi(A_\nu) = N_\nu(0)=1$, as there is a unique $\nu$-Dyck path with zero valleys.

From Proposition~\ref{prop.narayana}, it can also be deduced that the number of large $\nu$-Schr\"oder paths is twice the number of (small) $\nu$-Schr\"oder paths. 
A bijective proof of this fact is also given.
This generalizes results of Aguiar and Moreira~\cite{AM06} and Gessel~\cite{Ges09}.

The topological proof that $\chi(A_\nu)=1$ employs discrete Morse theory.
We show in {\bf Theorem~\ref{thm:contractible}} that the contraction poset of $\nu$-Schr\"oder paths has an acyclic matching with a unique critical element, thereby showing that the $\nu$-associahedron is contractible. The matching has a simple description in terms of $\nu$-Schr\"oder paths, highlighting a benefit of this viewpoint.

Another avenue for generalizing the theory of Schr\"oder paths is to define $(q,t)$-analogues.
Haglund~\cite[Section 4]{Hag08} developed the theory of the $(q,t)$-Schr\"oder polynomial in connection with the theory of Macdonald polynomials and diagonal harmonics.
Song~\cite{Son05}, and Aval and Bergeron~\cite{AB18} further extended this $(q,t)$ generalization to the case of $(a,ma+1)$- and $(a,b)$-Schr\"oder paths for any positive integers $a,b$.
The idea of Schr\"oder parking functions was also explored in~\cite{AB18}. 
We anticipate that the $(q,t)$-analogue can be further extended to the case of $\nu$-Schr\"oder paths.

This article is organized as follows. 
In Section~\ref{sec.enumSchroder}, a number of existing bijective and enumerative results on Schr\"oder paths are extended to the case of $\nu$-Schr\"oder paths.
Closed-form expressions are obtained for Schr\"oder numbers with respect to the number of diagonal step in the special case when $\nu$ is a `rational' lattice path. 
In Section~\ref{sec.SchroderTrees}, a bijection between $\nu$-Schr\"oder trees and paths is given.
Furthermore, a poset structure on the set of $\nu$-Schr\"oder trees given by contraction operations is defined, and it is shown that this induces a poset structure on the set of $\nu$-Schr\"oder paths.
In Section~\ref{sec.nuAssociahedron}, the face poset of the $\nu$-associahedron is shown to have alternative descriptions in terms of $\nu$-Schr\"oder trees and paths. 
An acyclic partial matching on the Hasse diagram of the contraction poset of $\nu$-Schr\"oder paths is exhibited, giving a proof that the $\nu$-associahedron is contractible. 

\thanks{{\bf Acknowledgments.}
We thank Richard Ehrenborg for suggesting the use of Discrete Morse Theory to show the contractibility of the $\nu$-associahedron. 
MY was partially supported by the Simons Collaboration Grant 429920.
}

\section{Small and large Schr\"oder paths}\label{sec.enumSchroder}
We begin by presenting some preliminary definitions of the various kinds of lattice paths that we will consider.
\begin{definition}\label{defn.nuschroderpath}
A {\em lattice path} in the rectangle defined by $(0,0)$ and $(b,a)$ in $\bbZ_{\geq 0}^2$ is a sequence of $a$ north steps $N=(0,1)$ and $b$ east steps $E=(1,0)$.

Let $\nu$ be a lattice path in the rectangle defined by $(0,0)$ and $(b,a)$.
A {\em $\nu$-Dyck path} is a lattice path from $(0,0)$ to $(b,a)$ which stays weakly above the path $\nu$.

A {\em peak} of $\nu$ is a consecutive $NE$ pair in $\nu$, and a {\em high peak} is a peak that occurs strictly above the path $\nu$. 
A {\em valley} of $\nu$ is a consecutive $EN$ pair in $\nu$. 
The {\em $\nu$-diagonal} is defined to be the set of squares immediately below the peaks of $\nu$. 
Let $\mu$ denote the path obtained from $\nu$ by replacing each of its peaks with a diagonal $D=(1,1)$ step. 
A {\em (small) $\nu$-Schr\"oder path} is a Schr\"oder path from $(0,0)$ to $(b,a)$ which stays weakly above the path $\nu$. 
A {\em large $\nu$-Schr\"oder path} is a a Schr\"oder path from $(0,0)$ to $(b,a)$ which stays weakly above the path $\mu$.
Let $\calP_\nu$ and $\calLP_\nu$ denote the set of small and large $\nu$-Schr\"oder paths respectively. 
Figure~\ref{fig:nuSchPaths} provides some examples of both small and large $\nu$-Schr\"oder paths. 
For a small $\nu$-Schr\"oder path $\pi$ we define its area, denoted by $\mathrm{area}(\pi)$, to be the area of the region between $\pi$ and $\nu$. 
For example, the small $\nu$-Schr\"oder path on the right in Figure~\ref{fig:nuSchPaths} has area $1.5$.
\end{definition}

\begin{figure}[ht!]
\begin{tikzpicture}[scale=0.4]
\begin{scope}[xshift=0]
    \draw[help lines] (0,0) grid (6,3);
    
    \draw[fill=gray, opacity=0.25] (0,0) rectangle (6,1);
    \draw[fill=gray, opacity=0.25] (2,1) rectangle (6,2);
    \draw[fill=gray, opacity=0.25] (4,2) rectangle (6,3);    
    
    \draw[fill=gray, opacity=0.5] (0,0) rectangle (1,1);
    \draw[fill=gray, opacity=0.5] (2,1) rectangle (3,2);
    \draw[fill=gray, opacity=0.5] (4,2) rectangle (5,3);	
	\draw[dashed, thick, white] (0,0) -- (6,3);    
    \draw[very thick,red] (0,0) -- (1,1) -- (2,1) -- (3,2) -- (3,3) --  (6,3);
    
    \node at (-.8,0) {\tiny$(0,0)$};
    \node at (6.8,3) {\tiny$(6,3)$};
\end{scope}
\begin{scope}[xshift=320]
    \draw[help lines] (0,0) grid (5,4);
    \draw[fill=gray, opacity=0.25] (1,0) rectangle (5,1); 
    \draw[fill=gray, opacity=0.25] (3,1) rectangle (5,4);
    
    \draw[fill=gray, opacity=0.5] (1,0) rectangle (2,1);
    \draw[fill=gray, opacity=0.5] (3,3) rectangle (4,4);
    
	\draw[very thick,red] (0,0) -- (1,0) -- (1,2) -- (2,3) -- (3,3) -- (4,4) -- (5,4);
	
	\node at (-.8,0) {\tiny$(0,0)$};
    \node at (5.8,4) {\tiny$(5,4)$};
\end{scope}
\begin{scope}[xshift=640]
    \draw[help lines] (0,0) grid (5,4);
    \draw[fill=gray, opacity=0.25] (0,0) rectangle (5,1); 
    \draw[fill=gray, opacity=0.25] (1,1) rectangle (5,2); 
    \draw[fill=gray, opacity=0.25] (2,2) rectangle (5,3);
    \draw[fill=gray, opacity=0.25] (3,3) rectangle (5,4);
    
    \draw[fill=gray, opacity=0.5] (0,0) rectangle (1,1);
    \draw[fill=gray, opacity=0.5] (1,1) rectangle (2,2);
    \draw[fill=gray, opacity=0.5] (2,2) rectangle (3,3);
    \draw[fill=gray, opacity=0.5] (3,3) rectangle (4,4);    
    \draw[dashed, thick, white] (0,0) -- (5,4);
	\draw[very thick,red] (0,0) -- (0,2) -- (1,2) -- (2,3) -- (3,3) -- (3,4) -- (5,4);
    
    \node at (-.8,0) {\tiny$(0,0)$};
    \node at (5.8,4) {\tiny$(5,4)$};
\end{scope}
\end{tikzpicture}
\caption{From left to right: a large $\nu$-Schr\"oder path with $\nu=(NEE)^3$, a large $\nu$-Schr\"oder path with $\nu = ENEENNNEE$, and a (small) $(4,5)$-Schr\"oder path. The region below $\nu$ is shaded in gray, with the $\nu$-diagonal in a darker gray.}
\label{fig:nuSchPaths}
\end{figure}
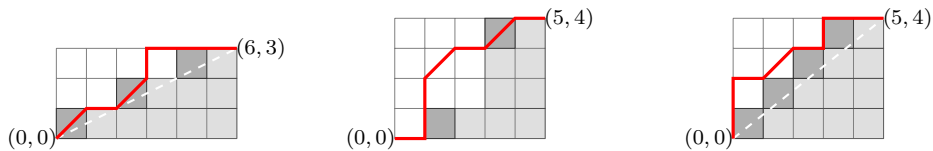

\begin{remark}
We point out two special cases; the rational $(a,b)$ case, and the classical case.

First, the line segment from $(0,0)$ to $(b,a)$ determines a unique lowest lattice path that stays weakly above it, that is, the unique lattice path $\nu=\nu(a,b)$ with valleys at the lattice points $\{ \hbox{$(k, \lceil \frac{ka}{b} \rceil)$} \,\big|\,  \lceil \frac{ka}{b} \rceil \neq \lceil \frac{(k+1)a}{b} \rceil,  1\leq k\leq b-1\}$. 
When $a$ and $b$ are coprime with $a<b$, the set of $\nu$-Dyck paths is the set of `rational' $(a,b)$-Dyck paths defined in by Armstrong, Rhoades and Williams~\cite{ARW13}.
The lattice path on the right in Figure~\ref{fig:nuSchPaths} is an example of a rational $(4,5)$-Schr\"oder path where $\nu=\nu(4,5)$ is determined by the white dotted line segment from $(0,0)$ to $(5,4)$.  
We point out that the lattice path $\nu$ on the left in Figure~\ref{fig:nuSchPaths} is also determined by the line segment from $(0,0)$ to $(b,a)$, but we do not consider this to be a rational case as $a=3$ and $b=6$ are not coprime.

Furthermore, in the case $a=n$ and $b=n+1$ for some positive integer $n$, the path $\nu(n,n+1)=(NE)^nE$, and the set of $\nu$-Dyck paths is equivalent to the set of `classical' Dyck paths, which are often defined as lattice paths from $(0,0)$ to $(n,n)$ that do not fall below the line $y=x$.
\end{remark}

Aguiar and Moreira~\cite[Proposition 3.1]{AM06} showed that the set of classical large Schr\"oder paths can be partitioned into two halves where one half consists of paths that do not contain $D$ steps on the diagonal, and the other half consists of paths that contain at least one $D$ step on the diagonal. 
Gessel~\cite{Ges09} showed that the same result holds in the more general rational $(a,b)$-case. 
We further generalize Gessel's argument to the setting of $\nu$-Schr\"oder paths. 

\begin{theorem} \label{smallhalfofbig} 
Let $\nu$ be a lattice path. 
Then $|\calLP_\nu| =2\,|\calP_\nu|$ if and only if $\nu$ begins with a north step and ends with an east step. 
\end{theorem}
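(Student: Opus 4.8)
The plan is to reduce the claim to one explicit bijection and then treat the two implications.

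\emph{Reduction.} Since $\mu$ lies weakly below $\nu$, every small $\nu$-Schr\"oder path is large, so $\calP_\nu\subseteq\calLP_\nu$. A large $\nu$-Schr\"oder path fails to be small exactly when it dips strictly below $\nu$, and this can happen only inside one of the triangles that $\mu$ cuts off at a peak of $\nu$; a short local analysis shows that a Schr\"oder path weakly above $\mu$ can enter such a triangle only by taking a $D$-step lying on the corresponding square of the $\nu$-diagonal. Hence $\calLP_\nu\setminus\calP_\nu$ is exactly the set of large $\nu$-Schr\"oder paths using at least one $D$-step on the $\nu$-diagonal, and $|\calLP_\nu|=2\,|\calP_\nu|$ is equivalent to the existence of a bijection $\calP_\nu\to\calLP_\nu\setminus\calP_\nu$.

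\emph{Proof of ``$\Leftarrow$''.} Assume $\nu$ begins with $N$ and ends with $E$. Contracting every $D$-step of a Schr\"oder path to an $NE$ pair yields a $\nu$-Dyck path $P$ together with the set of peaks of $P$ that came from $D$-steps; un-contracting a peak keeps the path weakly above $\mu$ iff that point really is a peak of $P$, and keeps it weakly above $\nu$ iff the peak lies strictly above $\nu$. Writing $\mathrm{hp}(P)$ for the peaks of $P$ strictly above $\nu$ and $\mathrm{lp}(P)$ for those sitting on the $\nu$-diagonal, this gives
\[
\calP_\nu\ \longleftrightarrow\ \{(P,T):T\subseteq\mathrm{hp}(P)\},\qquad
\calLP_\nu\setminus\calP_\nu\ \longleftrightarrow\ \{(P,T,U):T\subseteq\mathrm{hp}(P),\ \varnothing\neq U\subseteq\mathrm{lp}(P)\},
\]
so the task becomes proving $\sum_{P}2^{|\mathrm{hp}(P)|}\bigl(2^{|\mathrm{lp}(P)|}-2\bigr)=0$, the sum over $\nu$-Dyck paths. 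Because $\nu$ begins with $N$ and ends with $E$, so does every $\nu$-Dyck path (it must rise to height $1$ over its first column and reach height $a$ before its last step), hence every $\nu$-Dyck path has exactly one more peak than valley. I would use this surplus peak to build a weight-preserving involution on $\nu$-Dyck paths carrying a marked peak-subset, toggling one distinguished peak between ``high'' and ``low'', so as to cancel the paths with $|\mathrm{lp}(P)|=0$ (weight $-2^{|\mathrm{hp}(P)|}$) against those with $|\mathrm{lp}(P)|\geq2$ (weight $2^{|\mathrm{hp}(P)|}$ times the number of nonempty proper subsets of $\mathrm{lp}(P)$), while fixing the $|\mathrm{lp}(P)|=1$ paths.

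\emph{Proof of ``$\Rightarrow$''.} Arguing the contrapositive, I would first reduce: if $\nu$ begins with two $N$'s one may delete the first, and if $\nu$ ends with two $E$'s one may delete the last, neither operation changing $|\calP_\nu|$ or $|\calLP_\nu|$ because the corresponding extreme step of every $\nu$-Schr\"oder path is then forced. So we may assume $\nu$ does not begin with $NN$ and does not end with $EE$; if, in addition, $\nu$ begins with $E$ or ends with $N$ (or $\nu$ is a single step), one checks via the encoding above that the signed count does not vanish — in fact $|\calLP_\nu|<2\,|\calP_\nu|$, the $\nu$-Dyck path taking all its $N$-steps first already contributing a surplus of ``$|\mathrm{lp}|=0$'' weight that the remaining terms cannot make up.

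I expect the main obstacle to be the involution in ``$\Leftarrow$'': finding a single clean, manifestly invertible rule that realizes the cancellation uniformly in $\nu$ and makes precise exactly how the hypothesis is used. The naive idea — turn a distinguished peak of a small path lying on the $\nu$-diagonal into a $D$-step — fails immediately, since a small $\nu$-Schr\"oder path need not meet the $\nu$-diagonal in a peak at all (e.g.\ the one taking all its $N$-steps first), so the matching must be organized carefully on the $(P,T)$ side.
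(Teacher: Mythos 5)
Your reduction is sound: $\calLP_\nu\setminus\calP_\nu$ is indeed the set of large paths with a $D$ step on the $\nu$-diagonal, and your encoding of small (resp.\ large) paths as a $\nu$-Dyck path with a marked subset of high (resp.\ arbitrary) peaks is correct, since every peak of a $\nu$-Dyck path is either strictly above $\nu$ or sits at a peak of $\nu$. But at that point you have only restated the theorem: the identity $\sum_P 2^{|\mathrm{hp}(P)|}\bigl(2^{|\mathrm{lp}(P)|}-2\bigr)=0$ is literally equivalent to $|\calLP_\nu|=2\,|\calP_\nu|$, because $|\mathrm{hp}(P)|+|\mathrm{lp}(P)|$ is the total number of peaks. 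The entire content of the forward direction is the cancellation, and your proposal never constructs it: ``I would use this surplus peak to build a weight-preserving involution \dots toggling one distinguished peak between high and low'' is a hope, not a rule, and as you yourself note the naive toggle does not exist (a path with $\mathrm{lp}(P)=\varnothing$ has no low peak to toggle, and converting a high peak into a low one changes the underlying Dyck path nonlocally). The missing ingredient is exactly what the paper supplies: either the explicit surgery on paths (write a small path as $N\mu_1E\mu_2$ with $E$ the first east step on the $\nu$-diagonal, which exists because $\nu$ ends in $E$, and map it to $\mu_1D\mu_2$; invert by splitting at the last diagonal $D$), or equivalently Lemma~\ref{lem.highpeaks}, the bijection between $\nu$-Dyck paths with $i$ high peaks and those with $i+1$ peaks, which yields $\sum_P 2^{|\mathrm{hp}(P)|}=\tfrac12\sum_P 2^{\#\mathrm{peaks}(P)}$ as in the remark following Corollary~\ref{narayanaPoly}. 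Until one such construction is written down, the forward direction is not proved.

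The converse has the same character. The reductions (stripping a leading $N$ when $\nu$ begins $NN$, a trailing $E$ when it ends $EE$) are fine after a small check that the stripped step of $\nu$ is not part of a peak, but the core claim ``the $\nu$-Dyck path taking all its $N$-steps first already contributes a surplus of $|\mathrm{lp}|=0$ weight that the remaining terms cannot make up'' is asserted without any argument; a priori the terms with $|\mathrm{lp}(P)|\geq 2$ could outweigh it, and you give no mechanism ruling this out. The paper avoids this entirely: the map sending a path in $\calLP_\nu\setminus\calP_\nu$ with last diagonal $D$ step $\pi_1D\pi_2$ to $N\pi_1E\pi_2$ is an injection into $\calP_\nu$ for every $\nu$, and when $\nu$ begins with $E$ (or ends with $N$) its image misses the small paths beginning with $E$ or $D$ (e.g.\ $\nu$ itself), giving $|\calLP_\nu|<2\,|\calP_\nu|$ at once. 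So both halves of your argument stop short of the decisive bijective step.
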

\begin{proof}
Suppose $\nu$ begins with a north step and ends with an east step. 
Let $\calA=\calLP_\nu\backslash\calP_\nu$ denote the set of $\nu$-Schr\"oder paths with at least one $D$ step on the $\nu$-diagonal. 
Define a map $f:\calP_\nu \to \calA$ as follows:
A path $\mu \in \calP_\nu$ can be partitioned as $N\mu_1E\mu_2$, where $E$ is the first $E$ step on the $\nu$-diagonal. 
The existence of such an $E$ step is guaranteed by the fact that $\nu$ ends in an $E$ step, so there is a $\nu$-diagonal square in the top row. 
Let $f(\mu)$ be the path $\mu_1D\mu_2$.  
We claim that $f$ is a bijection.

To see that $f(\mu)\in \calA$, note that $f$ shifts the steps in $\mu_1$ down by one unit, while the steps in $\mu_2$ remain fixed. 
Thus the $D$ step of $f(\mu)$ which is between $\mu_1$ and $\mu_2$ occurs on the $\nu$-diagonal, since it replaced the $E$ step of $\mu$ which preceeded $\mu_2$.

A step in $N\mu_1$ can only intersect a horizontal run in $\nu$ at the leftmost lattice point of the horizontal run, since otherwise the first $E$ step of the horizontal run is an $E$ step of $\mu$ on the $\nu$-diagonal.
Therefore, only $N$ steps and $D$ steps which intersect only the leftmost lattice points of horizontal runs can occur in $N\mu_1$, both of which remain weakly above $\nu$ after shifting down by one unit. 
Thus $f(\mu)\in \calP_\nu$, and so $f$ is well-defined.

The inverse map $f\inv:\calA\to \calP_\nu$ is defined as follows: For $\pi \in \calA$, partition $\pi$ into $\pi_1D\pi_2$ where $D$ is the last $D$ step on the  $\nu$-diagonal. 
Then $f\inv$ is given by $\pi_1D\pi_2 \mapsto N\pi_1E\pi_2$, with $f(f\inv(\pi))=\pi$ and $f\inv (f(\mu)) =\mu$. 
Hence $f$ is a bijection, and $|\calLP_\nu|= 2\,|\calP_\nu|$. 

Conversely, suppose $\nu$ does not begin with a north step. 
The map $f\inv:\calA\to \calP_\nu$ is injective, but for any path $\rho \in \calP_\nu$ that begins with a $D$ (or $E$) step there is no path $\sigma \in \calA$ such that $f\inv(\rho)=\sigma$. 
Hence $|\calA| < |\calP_\nu|$ and so $2\, |\calP_\nu| \neq |\calLP_\nu|$. 
The case when $\nu$ does not end with an $E$ step can be argued similarly.  
\end{proof}

Recall that a high peak of a lattice path $\nu$ is a peak that occurs strictly above $\nu$. 
A $\nu$-Dyck path is completely determined by its high peaks. It is also completely determined by its valleys. See Figure~\ref{highPeaks} for an example.
\begin{figure}[ht!]
\begin{tikzpicture}[scale=0.5]
\begin{scope}[xshift=0, yshift=0]
    \draw[help lines] (0,0) grid (6,3);
    \draw[fill=gray, opacity=0.25] (0,0) rectangle (6,1); \draw[fill=gray, opacity=0.25] (2,1) rectangle (6,2);
    \draw[fill=gray, opacity=0.25] (4,2) rectangle (6,3);

	\vertex[fill=NavyBlue, minimum size=4pt] at (0,2) {};
	\vertex[fill=NavyBlue, minimum size=4pt] at (0,3) {};
	\vertex[fill=NavyBlue, minimum size=4pt] at (1,2) {};	
	\vertex[fill=NavyBlue, minimum size=4pt] at (1,3) {};
	\vertex[fill=NavyBlue, minimum size=4pt] at (2,3) {};
	\vertex[fill=NavyBlue, minimum size=4pt] at (3,3) {};		

    \node at (3,-1) {Possible high peaks};
\end{scope}
\begin{scope}[xshift=350]
    \draw[help lines] (0,0) grid (6,3);
    \draw[fill=gray, opacity=0.25] (0,0) rectangle (6,1);
    \draw[fill=gray, opacity=0.25] (2,1) rectangle (6,2);
    \draw[fill=gray, opacity=0.25] (4,2) rectangle (6,3);

	\vertex[fill=cyan, minimum size=4pt] at (1,1) {};
	\vertex[fill=cyan, minimum size=4pt] at (1,2) {};
	\vertex[fill=cyan, minimum size=4pt] at (2,1) {};	
	\vertex[fill=cyan, minimum size=4pt] at (2,2) {};
	\vertex[fill=cyan, minimum size=4pt] at (3,2) {};
	\vertex[fill=cyan, minimum size=4pt] at (4,2) {};		
    
    \node at (3,-1) {Possible valleys};
\end{scope}
\begin{scope}[xshift=0, yshift=-180]
    \draw[help lines] (0,0) grid (6,3);

    \draw[fill=gray, opacity=0.25] (0,0) rectangle (6,1);
    \draw[fill=gray, opacity=0.25] (2,1) rectangle (6,2);
    \draw[fill=gray, opacity=0.25] (4,2) rectangle (6,3);
	
	\draw[very thick,red] (0,0) -- (0,2) -- (2,2) -- (2,3) -- (6,3);    
	
	\vertex[fill=NavyBlue, minimum size=4pt] at (0,2) {};
	\vertex[fill=NavyBlue, minimum size=4pt] at (2,3) {};
    \node at (3,-1) {Dyck path determined};
    \node at (3,-2) {by two high peaks};
\end{scope}
\begin{scope}[xshift=350, yshift=-180]
    \draw[help lines] (0,0) grid (6,3);
    
    \draw[fill=gray, opacity=0.25] (0,0) rectangle (6,1);
    \draw[fill=gray, opacity=0.25] (2,1) rectangle (6,2);
    \draw[fill=gray, opacity=0.25] (4,2) rectangle (6,3);

	\draw[very thick,red] (0,0) -- (0,1) -- (1,1) -- (1,2) -- (3,2) -- (3,3) --  (6,3);
    
	\vertex[fill=cyan, minimum size=4pt] at (1,1) {};
    \vertex[fill=cyan, minimum size=4pt] at (3,2) {}; 
	 
    \node at (3,-1) {Dyck path determined}; 
    \node at (3,-2) {by the two corresponding valleys};
\end{scope}    
\end{tikzpicture}
\caption{The possible high peaks and valleys for $\nu = NEENEENEE$ (top row), and a $\nu$-Dyck path determined by a pair of high peaks along with the $\nu$-Dyck path determined by the corresponding pair of valleys.}
\label{highPeaks}
\end{figure}
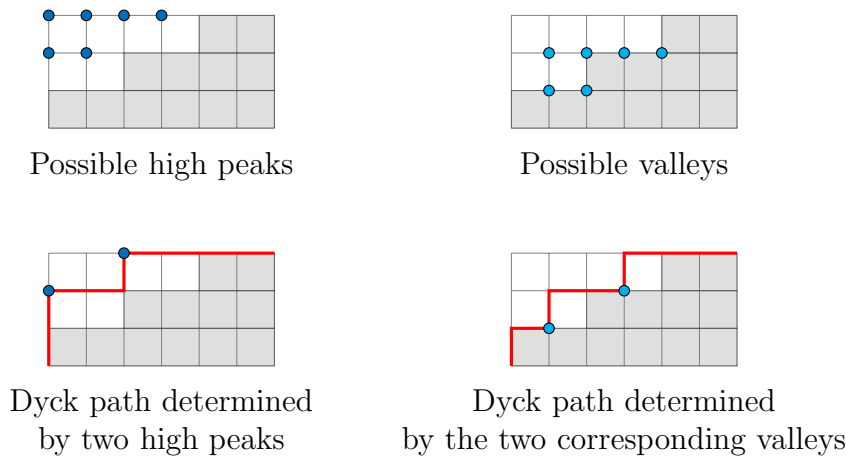

The proof of the next result is a direct generalization of the arguments in Deutsch~\cite{Deu98} and Gessel~\cite{Ges09} to the $\nu$-setting. 



\begin{lemma}\label{lem.highpeaks}
Let $\nu$ be a lattice path that begins with a north step and ends with an east step.
The set of $\nu$-Dyck paths with $i$ high peaks is in bijection with the set of $\nu$-Dyck paths with $i+1$ peaks. 
\end{lemma}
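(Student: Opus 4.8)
The plan is to remove one peak by passing from peaks to valleys, and then to realize the resulting bijection as a translation of lattice points. Because $\nu$ begins with a north step and ends with an east step, every $\nu$-Dyck path must do the same: a path beginning with $E$ would sit at height $0$ in the $0$th column, strictly below $\nu$, and symmetrically for the last column. Writing such a path in run notation $N^{a_1}E^{b_1}\cdots N^{a_r}E^{b_r}$ with all $a_l,b_l\ge 1$, it has exactly $r$ peaks and $r-1$ valleys. Thus a $\nu$-Dyck path has $i+1$ peaks if and only if it has $i$ valleys, and it is enough to biject $\nu$-Dyck paths with $i$ high peaks with $\nu$-Dyck paths with $i$ valleys.

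For that I would use the two facts already recorded above — a $\nu$-Dyck path is determined by its set of valleys, and also by its set of high peaks — together with explicit descriptions of which finite sets of lattice points actually occur. Let $h_\nu(x)$ denote the height of $\nu$ over the abscissa $x$. I expect the following: a set $\{(x_l,y_l)\}$ is the valley set of some $\nu$-Dyck path exactly when its coordinates are strictly increasing, each point lies weakly above $\nu$ (that is, $y_l\ge h_\nu(x_l-1)$), and $x_l\le b-1$, $y_l\le a-1$; and a set $\{(\alpha_l,\beta_l)\}$ is the high-peak set of some $\nu$-Dyck path exactly when its coordinates are strictly increasing, each point lies strictly above $\nu$ (that is, $\beta_l>h_\nu(\alpha_l)$), and $\alpha_l\le b-1$, $\beta_l\le a$. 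In each case the forward implication is straightforward from the run notation; for the converse one builds the lowest $\nu$-Dyck path passing through the prescribed points — through them as peaks, in the high-peak case — and following $\nu$ as closely as possible elsewhere. The only peaks acquired along the stretches where this path hugs $\nu$ are peaks of $\nu$ itself, hence not high, so its high-peak set is exactly the prescribed one; and the hypothesis that $\nu$ ends with an east step guarantees that, after the last prescribed peak, hugging $\nu$ returns the path to height $a$ already in column $b-1$, so the path can be closed off with an east step.

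With the two characterizations in hand, I would check that the translation $(x,y)\mapsto(x-1,y+1)$ carries the valley-set conditions to the high-peak-set conditions term by term: strict monotonicity of the coordinates is preserved; $y_l\ge h_\nu(x_l-1)$ becomes $y_l+1>h_\nu(x_l-1)$, i.e.\ strict ascent over $\nu$ at the shifted abscissa; and the bounds $x_l\le b-1$, $y_l\le a-1$ become bounds of the required form. Both hypotheses on $\nu$ enter at the boundary: since $\nu$ begins with a north step, $h_\nu(\alpha_l)\ge 1$, so any high peak satisfies $\beta_l\ge 2$, which is exactly what lets the inverse translation $(\alpha,\beta)\mapsto(\alpha+1,\beta-1)$ produce a valley with $y$-coordinate at least $1$. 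Hence translation by $(-1,+1)$ is a bijection between valley sets and high-peak sets of $\nu$-Dyck paths; transporting it through the two ``determined by'' correspondences yields a bijection $D\mapsto D'$ in which the high-peak set of $D'$ is the $(-1,+1)$-translate of the valley set of $D$. This map sends a $\nu$-Dyck path with $i$ valleys to one with $i$ high peaks, and, combined with the first paragraph, it is the asserted bijection.

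The step I expect to be the real work is the sufficiency half of the two characterizations — producing, from an abstract strictly increasing chain of admissible lattice points, an honest $\nu$-Dyck path whose set of high peaks (respectively valleys) is exactly that chain and no larger, while respecting the forced first and last steps — together with the bookkeeping of the boundary inequalities, which is precisely where the two assumptions on $\nu$ are consumed. Everything else (the run-length count of peaks versus valleys, and the term-by-term matching of the two lists of conditions under the translation) is routine.
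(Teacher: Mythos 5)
Your proposal is correct and follows essentially the same route as the paper: reduce to showing that paths with $i$ high peaks biject with paths with $i$ valleys (using that a path with $i+1$ peaks has $i$ valleys when $\nu$ starts with $N$ and ends with $E$), and then realize the bijection as the diagonal shift sending a valley $(x,y)$ to a high peak $(x-1,y+1)$, exactly the map $(p,q)\mapsto(p+1,q-1)$ used in the paper. The only difference is one of detail: the paper disposes of well-definedness in a single sentence (``high peaks are strictly above $\nu$''), whereas you spell out the characterizations of valley sets and high-peak sets and the boundary bookkeeping, which is a legitimate filling-in of the same argument rather than a new one.
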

\begin{proof} 
Since $\nu$ is a lattice path that begins with a north step and ends with an east step, then each $\nu$-Dyck path with $i+1$ peaks is determined by its $i$ valleys, and it suffices to show that there is a bijection between the set of $\nu$-Dyck paths with $i$ high peaks and the set of $\nu$-Dyck paths with $i$ valleys. 
A bijection is given by mapping a $\nu$-Dyck path with high peaks at the lattice points $(p_1,q_1),\ldots,(p_i, q_i)$ to the $\nu$-Dyck path with valleys at the lattice points $(p_1+1, q_1-1),\ldots, (p_i+1, q_i-1)$, and mapping the unique $\nu$-Dyck path with no valleys to the unique $\nu$-Dyck path with no high peaks (which is $\nu$ itself). 
This map is well-defined because high peaks are strictly above the path $\nu$.
The inverse map sends a $\nu$-Dyck path with $i$ valleys at the lattice points $(p_1,q_1),\ldots,(p_i,q_i)$ to the $\nu$-Dyck path with $i$ high peaks at $(p_1-1,q_1+1),\ldots,(p_i-1,q_i+1)$, so the map is a bijection.
\end{proof} 

\begin{definition}\label{defn.narayana}
The {\em $i$-th $\nu$-Narayana number} $\mathtt{Nar}_\nu(i)$ is the number of $\nu$-Dyck paths with exactly $i$ valleys.
The $\nu$-{\em Narayana polynomial} is
$$ N_\nu(x) = \sum_{i\geq 0} \mathtt{Nar}_\nu(i) x^i .$$
\end{definition}
This generalization of the Narayana numbers was introduced by Ceballos, Padrol and Sarmiento~\cite{CPS19} as the $h$-vector of the $\nu$-Tamari complex.
The rational $(a,b)$ case also appears in the work of Armstrong, Rhoades and Williams~\cite{ARW13} as the $h$-vector of their rational associahedron. 
Bonin, Shapiro and Simion~\cite{BSS93} considered the Narayana polynomial for the dual associahedron.

\begin{proposition} \label{prop.narayana}
Let $\ttsch_\nu(i)$ denote the number of $\nu$-Schr\"oder paths with $i$ diagonal steps. Then
$$N_\nu(x+1) = \sum_{i\geq 0} \ttsch_\nu(i) x^i.$$
\end{proposition}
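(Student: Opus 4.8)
The plan is to reduce the statement to a coefficient-wise binomial identity and then prove that identity bijectively. Writing out $N_\nu(x+1)=\sum_{i\ge 0}\ttNar_\nu(i)(x+1)^i$ and comparing coefficients of $x^j$, the claim is equivalent to
$$\ttsch_\nu(j)=\sum_{i\ge j}\binom{i}{j}\,\ttNar_\nu(i)\qquad\text{for all }j\ge 0,$$
and conversely, once this is known, resumming $\sum_{j\ge 0}\ttsch_\nu(j)x^j=\sum_{j}x^j\sum_i\binom{i}{j}\ttNar_\nu(i)=\sum_i\ttNar_\nu(i)(x+1)^i$ recovers the proposition. So everything reduces to the displayed identity.

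The right-hand side counts pairs $(\pi,S)$ in which $\pi$ is a $\nu$-Dyck path with some number $i\ge j$ of valleys and $S$ is a choice of $j$ of those valleys. Using the correspondence between valleys and high peaks I would instead match $\nu$-Schr\"oder paths having $j$ diagonal steps with pairs $(\pi,S)$ where $\pi$ is a $\nu$-Dyck path and $S$ is a $j$-element set of \emph{high} peaks of $\pi$. The bijection is the expansion/contraction of diagonal steps: from a $\nu$-Schr\"oder path $\sigma$, replace every $D$ step by the corner $NE$ to obtain a $\nu$-Dyck path $\pi$ (the $NE$ detour lies weakly above the $D$ step, so $\pi$ still stays weakly above $\nu$), and let $S$ be the set of peaks thereby created; conversely, from $(\pi,S)$, contract the $NE$ corner at each high peak in $S$ back to a $D$ step. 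The modifications at distinct peaks affect pairwise disjoint sets of steps, so they are independent, the two maps are mutually inverse, and $\sigma$ has $|S|$ diagonal steps.

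The heart of the matter is the two well-definedness checks. Forward: if a $D$ step of $\sigma$ goes from $(p,q)$ to $(p+1,q+1)$, then because $\sigma$ stays weakly above $\nu$, the path $\nu$ cannot rise above height $q$ on the horizontal strip from abscissa $p$ to $p+1$ (else the $D$ step would pass below $\nu$), so the new peak at $(p,q+1)$ is strictly above $\nu$ — a high peak. Backward: a high peak $(p,q)$ of $\pi$ lies strictly above $\nu$, which forces $\nu$ to have height at most $q-1$ there; this is exactly the clearance needed for the contracted $D$ step from $(p,q-1)$ to $(p+1,q)$ to remain weakly above $\nu$. I expect these checks to be the main obstacle: one must verify that expansion always produces a \emph{high} peak (never a peak lying on $\nu$), and that every high peak admits the reverse contraction. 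This is the $\nu$-analogue of the classical statement that the diagonal steps of a Schr\"oder path correspond to a chosen subset of the peaks of the associated Dyck path.

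Granting the bijection, $\ttsch_\nu(j)=\sum_{\pi}\binom{\mathrm{hp}(\pi)}{j}$, summed over all $\nu$-Dyck paths $\pi$, where $\mathrm{hp}(\pi)$ is the number of high peaks of $\pi$. Since a $\nu$-Dyck path is determined both by its high peaks and by its valleys, the map sending a path with high peaks $(p_1,q_1),\dots,(p_i,q_i)$ to the path with valleys $(p_1{+}1,q_1{-}1),\dots,(p_i{+}1,q_i{-}1)$ — the bijection used in the proof of Lemma~\ref{lem.highpeaks} — shows that the number of $\nu$-Dyck paths with $i$ high peaks equals the number with $i$ valleys, which is $\ttNar_\nu(i)$. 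Hence $\ttsch_\nu(j)=\sum_\pi\binom{\mathrm{val}(\pi)}{j}=\sum_{i\ge j}\binom{i}{j}\ttNar_\nu(i)$, which is the identity sought.
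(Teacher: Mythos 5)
Your proposal is correct and takes essentially the same route as the paper: both reduce the statement to $\ttsch_\nu(j)=\sum_{i\ge j}\binom{i}{j}\ttNar_\nu(i)$ via the binomial expansion, realize this by replacing a chosen set of high peaks with $D$ steps, and invoke the high-peak/valley correspondence from the proof of Lemma~\ref{lem.highpeaks} to identify the number of $\nu$-Dyck paths with $i$ high peaks with $\ttNar_\nu(i)$. The only differences are cosmetic: you spell out the well-definedness of the $D\leftrightarrow NE$ replacement, which the paper records as a one-line observation, and you omit the paper's harmless normalization to $\nu$ beginning with $N$ and ending with $E$, which your argument does not actually need.
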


\begin{proof}
Note that $|\calP_{\nu}| = |\calP_{N\nu E}|$, that is, appending an $N$ step to the beginning of $\nu$ and an $E$ step to the end of $\nu$ does not change the number of $\nu$-Schr\"oder paths. 
Hence we can assume without loss of generality that $\nu$ begins with an $N$ step and ends with an $E$ step. 
By Lemma~\ref{lem.highpeaks}, $\mathtt{Nar}_\nu(i)$ is also the number of $\nu$-Dyck paths with exactly $i$ high peaks. The result then follows from the computation
$$\sum_{j\geq 0 } \mathtt{Nar}_\nu(j) (x+1)^j 
= \sum_{i\geq 0} \sum_{j\geq 0}  \mathtt{Nar}_\nu(j)\binom{j}{i}x^i 
=\sum_{i\geq 0} \ttsch_\nu(i) x^i,$$
where the last equality follows from the observation that for each $\nu$-Dyck path with $j$ high peaks there are exactly $\binom{j}{i}$ ways to choose which $i$ of the high peaks to replace with a $D$ step. 
\end{proof}

\begin{corollary} \label{narayanaPoly}
The number of $\nu$-Schr\"oder paths is given by specializing $N_\nu(x)$ at $x=2$. 
\end{corollary}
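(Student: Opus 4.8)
The plan is to obtain this immediately from Proposition~\ref{prop.narayana} by the substitution $x=1$. Indeed, setting $x=1$ in the identity $N_\nu(x+1)=\sum_{i\geq0}\ttsch_\nu(i)x^i$ yields
\[
N_\nu(2)=\sum_{i\geq0}\ttsch_\nu(i).
\]
It then remains only to observe that the right-hand side is the total number of (small) $\nu$-Schr\"oder paths: every path in $\calP_\nu$ has some number $i$ of diagonal steps, so the sets of $\nu$-Schr\"oder paths with exactly $i$ diagonal steps partition $\calP_\nu$ as $i$ ranges over $\bbZ_{\geq0}$, giving $|\calP_\nu|=\sum_{i\geq0}\ttsch_\nu(i)$.

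One minor point I would spell out for cleanliness is that $N_\nu(x)$ is genuinely a polynomial, so evaluation at $x=2$ is unambiguous: a $\nu$-Schr\"oder path from $(0,0)$ to $(b,a)$ has at most $\min(a,b)$ diagonal steps, hence $\ttsch_\nu(i)=0$ for $i>\min(a,b)$ and likewise $\mathtt{Nar}_\nu(i)=0$ for large $i$, so both sums are finite. There is no real obstacle here — all of the substance is already carried by Lemma~\ref{lem.highpeaks} and Proposition~\ref{prop.narayana} — so the proof is essentially a one-line specialization together with this bookkeeping remark.
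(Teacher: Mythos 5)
Your proposal is correct and matches the paper's own argument: the paper likewise obtains $|\calP_\nu|=\sum_{i\geq 0}\ttsch_\nu(i)=N_\nu(2)$ by specializing Proposition~\ref{prop.narayana}, with your finiteness remark being harmless bookkeeping. (The paper additionally notes an alternative direct count via choosing, for each high peak, whether to replace it with a $D$ step, but that is only offered as a second viewpoint.)
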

\begin{proof}
The claim follows by noting that $|\calP_\nu| = \sum_{i\geq 0} \ttsch_\nu(i)=N_\nu(2)$. 
An alternative way to see this is to note that $\mathtt{Nar}_\nu(i)$ is the number $\nu$-Dyck paths with $i$ high peaks. 
For each of the high peaks, there are two choices; keep the peak or replace it with a $D$ step. Thus the total number of $\nu$-Schr\"oder paths is
$$|\calP_\nu| = \sum_{i\geq 0} \ttNar_\nu(i) 2^{i} = N_\nu(2).$$ 
\end{proof}

\begin{corollary} \label{eulerChar}
Let $\ttsch_\nu(i)$ denote the number of  $\nu$-Schr\"oder paths with $i$ diagonal steps. Then 
$$ \sum_{i\geq 0} (-1)^i\ttsch_\nu(i) = 1.$$ 
\end{corollary}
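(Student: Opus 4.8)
The plan is to read this off directly from Proposition~\ref{prop.narayana} by evaluating at $x=-1$. Both sides of the identity $N_\nu(x+1)=\sum_{i\geq 0}\ttsch_\nu(i)x^i$ are genuine polynomials in $x$ — the right-hand sum is finite since a $\nu$-Schr\"oder path from $(0,0)$ to $(b,a)$ has at most $\min(a,b)$ diagonal steps — so the substitution $x=-1$ is legitimate and gives
$$N_\nu(0)=\sum_{i\geq 0}(-1)^i\ttsch_\nu(i).$$

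It then remains only to check that $N_\nu(0)=\ttNar_\nu(0)=1$. By Definition~\ref{defn.narayana}, $\ttNar_\nu(0)$ is the number of $\nu$-Dyck paths with exactly zero valleys, where a valley is a consecutive $EN$ factor. A lattice path from $(0,0)$ to $(b,a)$ contains no $EN$ factor precisely when all of its north steps precede all of its east steps, i.e. when it is the path $N^aE^b$; and $N^aE^b$ lies weakly above every lattice path $\nu$ from $(0,0)$ to $(b,a)$, so it is always a $\nu$-Dyck path. Hence there is a unique $\nu$-Dyck path with no valleys, so $N_\nu(0)=1$, which combined with the displayed identity finishes the proof.

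I expect no real obstacle here: the whole weight of the statement is carried by Proposition~\ref{prop.narayana}, and the one fact to verify by hand, namely uniqueness of the valley-free $\nu$-Dyck path, is immediate. For contrast I would note that the same conclusion also follows from the topological route — the relation $\chi(A_\nu)=\sum_{i\geq 0}(-1)^i\ttsch_\nu(i)$ together with contractibility of $A_\nu$ (Theorem~\ref{thm:contractible}) — but that argument is heavier and, in the logical order of the paper, comes later; the evaluation $x=-1$ in Proposition~\ref{prop.narayana} is the natural proof to give at this point.
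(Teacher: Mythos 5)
Your proposal is correct and is essentially the paper's own argument: evaluate the identity of Proposition~\ref{prop.narayana} at $x=-1$ to get $\sum_{i\geq0}(-1)^i\ttsch_\nu(i)=N_\nu(0)$, and note that the unique valley-free $\nu$-Dyck path (namely $N^aE^b$) gives $N_\nu(0)=1$. The extra checks you supply (finiteness of the sum, explicit identification of the valley-free path) are fine but add nothing beyond the paper's proof.
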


\begin{proof}
This follows from the fact that $\sum_{i\geq 0} (-1)^i\ttsch_\nu(i) = N_\nu(0)$, and there is a unique $\nu$-Dyck path with no valleys.
\end{proof}

\begin{remark}
Corollary~\ref{eulerChar} can be obtained topologically from the results in Section~\ref{sec.nuAssociahedron} since $\sum_{i\geq 0} (-1)^i\ttsch_\nu(i)$ is the Euler characteristic of the contractible polyhedral complex known as the $\nu$-associahedron.  See Theorem~\ref{thm:contractible}. 
\end{remark}

\begin{remark}
Theorem~\ref{smallhalfofbig} can be deduced from Corollary~\ref{narayanaPoly} since $\mathtt{Nar}_\nu(i)$ is the number $\nu$-Dyck paths with $i+1$ peaks if and only if $\nu$ begins with a $N$-step and ends with an $E$-step, in which case
$$
    |\calLP_\nu| 
    = \sum_{i\geq 0} \#(\nu\text{-Dyck paths with }i+1 \text{ peaks})\cdot 2^{i+1} 
    = 2\sum_{i\geq 0} \mathtt{Nar}_\nu(i) 2^{i} 
    = 2\,|\calP_\nu|.
$$
\end{remark}

Recall that the we refer to the special case when $\nu=\nu(a,b)$ is the lattice path with valleys at $\{(k,\lceil ka/b\rceil) \mid \lceil ka/b\rceil \neq \lceil (k+1)a/b\rceil \}$ as the `rational' case.
We end this section with some enumerative results for the rational $(a,b)$-Schr\"oder paths, but we first recall some results on the rational $(a,b)$-Dyck paths.

For coprime positive integers $a,b$ the {\em rational $(a,b)$-Catalan number} $\ttCat(a,b)$ is the number of $(a,b)$-Dyck paths, and the {\em rational $(a,b)$-Narayana number} $\ttNar(a,b,i)$ is the number of $(a,b)$-Dyck paths with $i$ peaks.
Armstrong, Rhoades and Williams~\cite{ARW13} showed that 
$$\ttCat(a,b) = \frac{1}{a+b}\binom{a+b}{a} 
= \frac{1}{a}\binom{a+b-1}{b} 
= \frac{1}{b}\binom{a+b-1}{a},$$
and for $i=0,\ldots,a$,
$$\ttNar(a,b,i) = \frac{1}{a}\binom{a}{i}\binom{b-1}{b-i}.$$

We now enumerate $(a,b)$-Schr\"oder paths with respect to the number of diagonal steps.  

\begin{definition}
For coprime positive integers $a,b$, and $i=0,\ldots, a$, let $\ttsch(a,b,i)$ denote the number of (small) $(a,b)$-Schr\"oder paths with $i$ diagonal steps and let $\ttSch(a,b,i)$ denote the number of large $(a,b)$-Schr\"oder paths with $i$ diagonal steps.
\end{definition}

We can give an explicit formula for the numbers $\ttSch(a,b,i)$. 
The proof of the following result closely mirrors the one given by Song~\cite[Theorem 2.1]{Son05}, who studied Schr\"oder paths from $(0,0)$ to $(kn, n)$, which is equivalent to the rational case when $a=n$ and $b=kn+1$. 

\begin{proposition} For coprime positive integers $a,b$, and $i=0,\ldots, a$, 
\begin{align*}
\ttSch(a,b,i) 
&= \frac{1}{a}\binom{a}{i}\binom{a+b-1-i}{b-i}
= \frac{1}{b}\binom{b}{i}\binom{a+b-1-i}{a-i}.
\end{align*} 
\end{proposition}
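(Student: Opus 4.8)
The plan is to reduce $\ttSch(a,b,j)$ to the rational Narayana numbers $\ttNar(a,b,i)$ and then evaluate a binomial sum. The structural heart is a $D$-step-refined version of the bijection implicit in the remark that deduces Theorem~\ref{smallhalfofbig} from Corollary~\ref{narayanaPoly}: I claim that a large $(a,b)$-Schr\"oder path with $j$ diagonal steps is the same data as an $(a,b)$-Dyck path $\pi$ together with a choice of $j$ of its peaks, the chosen peaks being exactly the $NE$ pairs that get contracted to $D$ steps. One direction replaces each chosen $NE$ by a $D$; the other expands every $D$ of a large Schr\"oder path back to $NE$. To see these are mutually inverse and well defined I would use the single observation that $\mu(a,b)$ is obtained from $\nu(a,b)$ by replacing each $NE$ peak with a $D$, so $\mu(a,b)$ and $\nu(a,b)$ agree at every integer abscissa (they have the same ``column bottoms''). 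Expanding all $D$ steps of a path weakly above $\mu(a,b)$ then yields an $N,E$-path weakly above $\mu(a,b)$, hence weakly above $\nu(a,b)$ by the column comparison, i.e.\ a $\nu$-Dyck path, and the expanded steps are genuine peaks; conversely, contracting an arbitrary subset of peaks of a $\nu$-Dyck path keeps it weakly above $\mu(a,b)$, since each peak of the Dyck path lies weakly above the corresponding peak of $\nu$ and the column comparison again controls the behaviour along the new $D$ steps.

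Granting the bijection, grouping $\nu$-Dyck paths by their number of peaks gives $\ttSch(a,b,j)=\sum_{i\ge 0}\ttNar(a,b,i)\binom{i}{j}$, since a $\nu$-Dyck path with $i$ peaks produces $\binom ij$ large Schr\"oder paths with $j$ diagonal steps. Now I substitute the Armstrong--Rhoades--Williams formula $\ttNar(a,b,i)=\frac1a\binom ai\binom{b-1}{b-i}$, use $\binom ai\binom ij=\binom aj\binom{a-j}{i-j}$ to pull $\frac1a\binom aj$ out of the sum, and apply Vandermonde's identity $\sum_{l\ge 0}\binom{a-j}{l}\binom{b-1}{(b-j)-l}=\binom{a+b-1-j}{b-j}$ to what remains. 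This yields $\ttSch(a,b,j)=\frac1a\binom aj\binom{a+b-1-j}{b-j}$, which is the first stated expression. For the equality with the second expression, observe that both sides equal $\dfrac{(a+b-1-j)!}{j!\,(a-j)!\,(b-j)!}$, which is manifestly symmetric under $a\leftrightarrow b$; this is checked by expanding the factorials, so no separate combinatorial argument is needed.

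I expect the only genuine obstacle to be the verification of the structural bijection in the first paragraph — tracking the ``weakly above $\mu(a,b)$'' versus ``weakly above $\nu(a,b)$'' conditions through contraction and expansion of $D$ steps. This is the same kind of bookkeeping that appears in Song's treatment of the rational $(n,kn+1)$ case; alternatively one could run a cycle-lemma argument directly on step sequences with $j$ diagonal, $a-j$ north and $b-j$ east steps, which reproduces the symmetric form $\tfrac{1}{a+b-j}\cdot\tfrac{(a+b-j)!}{j!\,(a-j)!\,(b-j)!}$ in one stroke (with the coprimality of $a,b$ guaranteeing a unique good rotation). Once the bijection is in hand the rest is the routine binomial manipulation above, and I would sanity-check the formula at $j=0$ (where it must recover $\ttCat(a,b)=\frac1a\binom{a+b-1}{b}$, the empty-subset case of the bijection), at $j=a$, and on a small example such as $(a,b)=(2,3)$, where it predicts $\ttSch(2,3,0)=2$, $\ttSch(2,3,1)=3$, $\ttSch(2,3,2)=1$, consistent with $|\calLP_{(2,3)}|=2\,|\calP_{(2,3)}|=6$ from Theorem~\ref{smallhalfofbig}.
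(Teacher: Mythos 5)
Your proposal is correct and follows essentially the same route as the paper: both identify large $(a,b)$-Schr\"oder paths with $i$ diagonal steps with $(a,b)$-Dyck paths together with a chosen set of $i$ peaks to contract, and then evaluate $\sum_{p}\binom{p}{i}\ttNar(a,b,p)$ via the subset--Vandermonde manipulation to get $\frac{1}{a}\binom{a}{i}\binom{a+b-1-i}{b-i}$, with the second expression following from the symmetry of $\frac{(a+b-1-i)!}{i!\,(a-i)!\,(b-i)!}$ in $a$ and $b$. The extra care you take in verifying the peak-contraction bijection (the comparison of paths weakly above $\mu(a,b)$ versus $\nu(a,b)$) is detail the paper simply asserts as its ``crucial observation,'' so no substantive difference in approach.
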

\begin{proof}
The crucial observation is that the set of large $(a,b)$-Schr\"oder paths with $i$ diagonal steps can be generated by taking the set of $(a,b)$-Dyck paths with at least $i$ peaks, and replacing $i$ of the peaks with diagonal steps.  
Each large $(a,b)$-Schr\"oder path is obtained in a unique way in this construction, thus
\begin{align*}
\ttSch(a,b,i)
 &= \sum_{p\geq i} \binom{p}{i} \ttNar(a,b,p)
 = \sum_{p\geq i} \binom{p}{i} \frac{1}{a}\binom{a}{p}\binom{b-1}{b-p}\\
 &= \frac{1}{a}\binom{a}{i}\sum_{p\geq i}\binom{a-i}{p-i} \binom{b-1}{b-p}
 = \frac{1}{a}\binom{a}{i}\binom{a+b-1-i}{b-i}.
\end{align*}
\end{proof}

Following directly from the bijection $f$ constructed in Theorem~\ref{smallhalfofbig}, we have the next result which relates $\ttSch(a,b,i)$ and $\ttsch(a,b,i)$.
\begin{corollary} For coprime positive integers $a,b$, and $i=0,\ldots, a$,
$$\ttSch(a,b,i) = \ttsch(a,b,i) + \ttsch(a,b,i-1),$$
with the understanding that $\ttsch(a,b,-1) =0$.
\qed
\end{corollary}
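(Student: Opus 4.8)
The plan is to carry the diagonal-step statistic through the bijection $f$ built in Theorem~\ref{smallhalfofbig}, applied to $\nu=\nu(a,b)$. First I would observe that since $a$ and $b$ are coprime positive integers, the path $\nu(a,b)$ begins with a north step and ends with an east step: a leading $E$ step would drop the path strictly below the segment from $(0,0)$ to $(b,a)$ right away, and a trailing $N$ step would force the path strictly below that segment during its final $E$ step. Hence Theorem~\ref{smallhalfofbig} is available, and $f\colon\calP_\nu\to\calA$ is a bijection, where $\calA=\calLP_\nu\setminus\calP_\nu$ is exactly the set of large $(a,b)$-Schr\"oder paths carrying at least one $D$ step on the $\nu$-diagonal.

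Next I would split the count $\ttSch(a,b,i)$ according to the partition $\calLP_\nu=\calP_\nu\sqcup\calA$. The large $(a,b)$-Schr\"oder paths with $i$ diagonal steps lying in $\calP_\nu$ are precisely the small $(a,b)$-Schr\"oder paths with $i$ diagonal steps, so they contribute $\ttsch(a,b,i)$; it remains to show that the paths of $\calA$ with $i$ diagonal steps number $\ttsch(a,b,i-1)$. Here the key point is that $f$ raises the number of diagonal steps by exactly one: writing $\mu=N\mu_1E\mu_2\in\calP_\nu$ with $E$ the first east step on the $\nu$-diagonal, the path $f(\mu)=\mu_1D\mu_2$ is gotten from $\mu$ by deleting the initial $N$, turning that distinguished $E$ into a $D$, and leaving every other step---and in particular every other diagonal step---unchanged. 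Therefore $f$ restricts to a bijection from the small $(a,b)$-Schr\"oder paths with $i-1$ diagonal steps onto the paths of $\calA$ with $i$ diagonal steps, and since every path of $\calA$ has at least one diagonal step, the case $i=0$ contributes nothing, consistent with the convention $\ttsch(a,b,-1)=0$. Assembling the two pieces gives $\ttSch(a,b,i)=\ttsch(a,b,i)+\ttsch(a,b,i-1)$.

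I do not anticipate a genuine obstacle: the content is entirely bookkeeping on top of Theorem~\ref{smallhalfofbig}. The only two points that need a sentence of justification are the verification that $\nu(a,b)$ has the required first and last steps, so that the theorem applies, and the observation that $f$ is degree-one in the diagonal-step statistic, which is immediate from its explicit formula $N\mu_1E\mu_2\mapsto\mu_1D\mu_2$.
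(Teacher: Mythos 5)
Your proposal is correct and is essentially the paper's own argument: the paper also deduces the identity directly from the bijection $f$ of Theorem~\ref{smallhalfofbig}, using the decomposition $\calLP_\nu=\calP_\nu\sqcup\calA$ and the fact that $f$ increases the number of $D$ steps by exactly one. Your added verifications (that $\nu(a,b)$ begins with $N$ and ends with $E$, and the $i=0$ case matching the convention $\ttsch(a,b,-1)=0$) are exactly the bookkeeping the paper leaves implicit.
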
 

From this Corollary, we can deduce an explicit formula for the numbers $\ttsch(a,b,i)$.
\begin{proposition} For coprime positive integers $a,b$, and $i=0,\ldots, a-1$,
$$\ttsch(a,b,i) 
=\frac{1}{a}\binom{b-1}{i}\binom{a+b-1-i}{b}
= \frac{1}{b} \binom{a-1}{i} \binom{a+b-1-i}{a}.$$
\end{proposition}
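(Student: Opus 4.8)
The plan is to use the relation $\ttSch(a,b,i) = \ttsch(a,b,i) + \ttsch(a,b,i-1)$ from the preceding Corollary, which lets us recover $\ttsch(a,b,i)$ from the closed form for $\ttSch(a,b,i)$ already established. Indeed, iterating the Corollary gives
$$\ttsch(a,b,i) = \sum_{j=0}^{i} (-1)^{i-j}\ttSch(a,b,j) = \sum_{j=0}^{i}(-1)^{i-j}\frac{1}{a}\binom{a}{j}\binom{a+b-1-j}{b-j}.$$
So the core of the argument is a binomial-sum identity: I would verify that this alternating sum collapses to $\frac{1}{a}\binom{b-1}{i}\binom{a+b-1-i}{b}$. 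The equality of the two displayed expressions, $\frac{1}{a}\binom{b-1}{i}\binom{a+b-1-i}{b} = \frac{1}{b}\binom{a-1}{i}\binom{a+b-1-i}{a}$, is then a routine check: expand both sides into factorials and observe both equal $\frac{(a+b-1-i)!}{i!\,(a-1-i)!\,(b-1-i)!\,(a+b-2i)}$ after cancellation (one can also see it directly since $\frac1a\binom{b-1}{i} = \frac{(b-1)!}{a\,i!\,(b-1-i)!}$ and symmetrically for the other side, and the remaining binomial factors match the missing parts).

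For the alternating sum I would proceed generating-function style, mirroring the $\ttSch$ proof. Write $\binom{a+b-1-j}{b-j}$ as the coefficient extraction $[x^{b-j}](1+x)^{a+b-1-j}$, or better, expand back through the Narayana numbers: since $\ttSch(a,b,j) = \sum_{p\ge j}\binom{p}{j}\ttNar(a,b,p)$, we get
$$\ttsch(a,b,i) = \sum_{j=0}^i (-1)^{i-j}\sum_{p\ge j}\binom{p}{j}\ttNar(a,b,p) = \sum_{p\ge i}\ttNar(a,b,p)\sum_{j=0}^{i}(-1)^{i-j}\binom{p}{j},$$
and the inner sum is the well-known partial alternating binomial sum $\sum_{j=0}^{i}(-1)^{i-j}\binom{p}{j} = \binom{p-1}{i}$ (valid for $p \ge 1$; the $p=0$ term contributes only when $i=0$ and equals $1 = \binom{0}{0}$ under the convention $\binom{-1}{0}=1$, or is simply absorbed). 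This is exactly the statement that $\ttsch(a,b,i)$ counts $(a,b)$-Dyck paths with at least $i$ high peaks, $i$ of which are replaced by diagonal steps — consistent with Proposition~\ref{prop.narayana} and its proof, and in fact this gives a cleaner self-contained route. Then
$$\ttsch(a,b,i) = \sum_{p\ge i}\binom{p-1}{i}\ttNar(a,b,p) = \frac{1}{a}\sum_{p\ge i}\binom{p-1}{i}\binom{a}{p}\binom{b-1}{b-p}.$$

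The remaining step is to evaluate $\sum_{p}\binom{p-1}{i}\binom{a}{p}\binom{b-1}{b-p}$ in closed form. I would rewrite $\binom{p-1}{i}\binom{a}{p} = \binom{p-1}{i}\binom{a}{p}$ and use the absorption-type identity $\binom{p-1}{i}\binom{a}{p} = \binom{a-1}{i}\binom{a-i-1}{p-i-1}\cdot\frac{a}{p}$ — or, more cleanly, shift to the Vandermonde-ready form. Concretely: $\binom{p-1}{i}\binom{a}{p} = \frac{a}{p}\binom{p-1}{i}\binom{a-1}{p-1} = \frac{a}{p}\binom{a-1}{i}\binom{a-1-i}{p-1-i}$, using $\binom{p-1}{i}\binom{a-1}{p-1} = \binom{a-1}{i}\binom{a-1-i}{p-1-i}$ (the standard subset-of-a-subset identity). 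After substitution the sum over $p$ becomes, up to constants, a single Vandermonde convolution $\sum_{p}\binom{a-1-i}{p-1-i}\binom{b-1}{b-p} = \binom{a+b-2-i}{b-1-i}$, and reassembling the factors of $a$, $b$, $p$ yields $\frac{1}{b}\binom{a-1}{i}\binom{a+b-1-i}{a}$, which matches the claimed formula.

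The main obstacle is purely bookkeeping: getting the $\frac{a}{p}$ factor to interact correctly with the index shifts so that what remains is a clean Vandermonde sum (one must be careful that the $1/p$ pairs with $\binom{a}{p}$ to produce $\frac{a}{p}\binom{a-1}{p-1}$ and not lose a term at $p=0$, which is harmless since $\binom{p-1}{i}=0$ there for $i\ge 1$). There are no genuine difficulties beyond this, since every identity invoked (partial alternating binomial sum, subset-of-a-subset, Vandermonde) is standard; alternatively the whole evaluation can be automated by Zeilberger's algorithm, which I would cite as a fallback. I would present the high-peak-replacement interpretation as the primary argument since it avoids the alternating sum entirely and makes the combinatorial content transparent.
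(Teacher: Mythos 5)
Your proposal is correct in substance but takes a genuinely different route from the paper. The paper's proof is a one-line induction on $i$: starting from $\ttsch(a,b,0)=\ttSch(a,b,0)=\ttCat(a,b)$, it uses the corollary $\ttSch(a,b,i)=\ttsch(a,b,i)+\ttsch(a,b,i-1)$ and simply \emph{verifies} by direct computation that the claimed closed form satisfies this recursion. You instead \emph{solve}: either by telescoping the corollary into the alternating sum $\sum_{j\leq i}(-1)^{i-j}\ttSch(a,b,j)$, or (your preferred route) by summing directly over Narayana numbers, $\ttsch(a,b,i)=\sum_{p}\binom{p-1}{i}\ttNar(a,b,p)$, which is exactly the high-peak-replacement count underlying Proposition~\ref{prop.narayana} (via Lemma~\ref{lem.highpeaks}, the $(a,b)$-Dyck paths with $p-1$ high peaks are counted by $\ttNar(a,b,p)$). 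This parallels the paper's own derivation of the $\ttSch(a,b,i)$ formula and has the merit of explaining where the formula comes from, at the cost of an extra hypergeometric evaluation; the paper's induction is shorter but less illuminating.

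Two computational details need repair, though neither is a conceptual gap. First, the Vandermonde convolution you display, $\sum_p\binom{a-1-i}{p-1-i}\binom{b-1}{b-p}=\binom{a+b-2-i}{b-1-i}$, is not the sum you actually face: after writing $\frac{1}{a}\binom{p-1}{i}\binom{a}{p}=\frac{1}{p}\binom{a-1}{i}\binom{a-1-i}{p-1-i}$ there is a residual $1/p$ that cannot be discarded. The clean fix is to absorb it into the \emph{other} binomial: $\frac{1}{p}\binom{b-1}{b-p}=\frac{1}{p}\binom{b-1}{p-1}=\frac{1}{b}\binom{b}{p}$, after which the sum becomes $\frac{1}{b}\binom{a-1}{i}\sum_p\binom{a-1-i}{p-1-i}\binom{b}{b-p}=\frac{1}{b}\binom{a-1}{i}\binom{a+b-1-i}{b-1-i}$, which is indeed $\frac{1}{b}\binom{a-1}{i}\binom{a+b-1-i}{a}$. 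Second, in your ``routine check'' that the two closed forms agree, the common value is $\frac{(a+b-1-i)!}{ab\,i!\,(a-1-i)!\,(b-1-i)!}$, not $\frac{(a+b-1-i)!}{i!\,(a-1-i)!\,(b-1-i)!\,(a+b-2i)}$ (test $a=2$, $b=3$, $i=0$: the correct value is $\ttCat(2,3)=2$, while your expression gives $12/5$). With these fixes your argument goes through.
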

\begin{proof}
Induct on $i$. 
By definition, $\ttsch(a,b,0)=\ttSch(a,b,0)=\ttCat(a,b)$, and one can check via a direct computation that $\ttSch(a,b,i) - \ttsch(a,b,i-1) = \ttsch(a,b,i)$.
\end{proof}


\section{$\nu$-Schr\"oder Trees}\label{sec.SchroderTrees}

In this section we introduce $\nu$-Schr\"oder trees, which generalize the $\nu$-trees of Ceballos, Padrol and Sarmiento~\cite{CPS20}.  
They showed that the rotation lattice of $\nu$-trees is an alternative description of the $\nu$-Tamari lattice, which is the $1$-skeleton of the $\nu$-associahedron.
Using the structural insight gained from the $\nu$-tree perspective, they showed that the $\nu$-Tamari lattice is isomorphic to the increasing flip poset of a suitably chosen subword complex, and solve a special case of Rubey's Lattice Conjecture. 

We define poset structures on $\nu$-Schr\"oder trees and $\nu$-Schr\"oder paths, and show that these posets are isomorphic.  In Section~\ref{sec.nuAssociahedron}, we show that these posets are an alternative description for the face poset of the $\nu$-associahedron.

Let $\nu$ be a lattice path from $(0,0)$ to $(b,a)$.
Let $R_\nu$ denote the region of the plane which lies weakly above $\nu$ inside the rectangle defined by $(0,0)$ and $(b,a)$.  
In Figure~\ref{fig:nuTrees}, $R_\nu$ is represented by the unshaded region in the rectangular grid.
Two lattice points $p$ and $q$ in $R_\nu$ are {\em $\nu$-incompatible} if and only if $p$ is southwest or northeast of $q$, and the smallest rectangle containing $p$ and $q$ is contained in $R_\nu$. 
We say that $p$ and $q$ are {\em $\nu$-compatible} if they are not $\nu$-incompatible.  

\begin{definition}\label{defn.nutrees}
A {\em $\nu$-Schr\"oder tree} is a set of $\nu$-compatible points in $R_\nu$ which includes the point $(0,b)$, such that each row and each column contains at least one point. 
The point $(0,b)$ in a $\nu$-Schr\"oder tree is the {\em root}, and the other points will be called {\em nodes}. 
A maximal collection of pairwise $\nu$-compatible lattice points in $R_\nu$ will be referred to as a {\em $\nu$-binary tree}.
Let $\calT_\nu$ denote the set of $\nu$-Schr\"oder trees.
\end{definition}

Note that $\nu$-binary trees are equal to the $\nu$-trees of \cite{CPS20}. We use the term $\nu$-binary tree to emphasize their binary nature. This way, the $\nu$-Schr\"oder trees generalize $\nu$-binary trees just as Schr\"oder trees generalize binary trees in the classical sense. 

It may seem peculiar that a collection of points is called a `tree', but this is justified as we may associate a non-crossing plane tree embedded in $R_\nu$ to each $\nu$-Schr\"oder tree $T$ as follows.
If a non-root node $p$ of $T$ in $R_\nu$ has a node above it in the same column or a node to the left of it in the same row, we connect them by an edge. 
Note that the $\nu$-compatibility of the nodes guarantees that it does not have both. 
However, it could have neither, in which case we consider the smallest rectangular box containing $p$ and exactly one other point $q$ of $T$. The point $q$ must be the northwest corner of such a box. 
The root guarantees the existence of such a box, and uniqueness follows from the fact that the northwest corners of two such hypothetical boxes would be $\nu$-incompatible. 
We then connect $p$ and $q$ by an edge. The resulting tree is guaranteed to be non-crossing, as otherwise the parent nodes of the two crossing edges would be $\nu$-incompatible. 

\begin{example}
Letting $\nu = \nu(3,5)$, Figure~\ref{fig:nuTrees} provides two examples of $\nu$-Schr\"oder trees. The region $R_\nu$ is the unshaded region weakly above $\nu$. The root is the node at $(0,3)$. Note that although the node $(2,3)$ is northeast of the node at $(0,1)$ in the left tree, they are $\nu$-compatible since the rectangle determined by them is not contained in $R_\nu$. No more nodes can be added to the left tree in Figure~\ref{fig:nuTrees} without introducing a pair of $\nu$-incompatible nodes, hence it is a $\nu$-binary tree. 
\begin{figure}[ht!]
\begin{tikzpicture}[scale=0.5]
\begin{scope}
    \draw[help lines] (0,0) grid (5,3);
    \draw[fill=gray, opacity=0.25] (0,0) rectangle (5,1); 
    \draw[fill=gray, opacity=0.25] (1,1) rectangle (5,2);
    \draw[fill=gray, opacity=0.25] (3,2) rectangle (5,3);

    \draw[ultra thick, RawSienna] (0,0) -- (0,1) -- (1,1);   
    \draw[ultra thick, RawSienna] (0,1) -- (0,3) -- (5,3);    
    \draw[ultra thick, RawSienna] (2,3) -- (2,2) -- (3,2);
    
    \vertex[draw=RawSienna, fill=LimeGreen] at (0,3) {};
    \vertex[draw=RawSienna, fill=LimeGreen] at (0,0) {};
    \vertex[draw=RawSienna, fill=LimeGreen] at (0,1) {};
    \vertex[draw=RawSienna, fill=LimeGreen] at (1,1) {};
    \vertex[draw=RawSienna, fill=LimeGreen] at (2,3) {};
    \vertex[draw=RawSienna, fill=LimeGreen] at (2,2) {};
    \vertex[draw=RawSienna, fill=LimeGreen] at (4,3) {};
    \vertex[draw=RawSienna, fill=LimeGreen] at (5,3) {};
    \vertex[draw=RawSienna, fill=LimeGreen] at (3,2) {};
\end{scope}
\begin{scope}[xshift=250]
    \draw[help lines] (0,0) grid (5,3);
    \draw[fill=gray, opacity=0.25] (0,0) rectangle (5,1); 
    \draw[fill=gray, opacity=0.25] (1,1) rectangle (5,2);
    \draw[fill=gray, opacity=0.25] (3,2) rectangle (5,3);

    \draw[ultra thick, RawSienna] (0,0) -- (0,3);
    \draw[ultra thick, RawSienna] (1,1) -- (0,3);    
    \draw[ultra thick, RawSienna] (0,3) -- (5,3);    
    \draw[ultra thick, RawSienna] (2,3) -- (3,2);
    
	\vertex[draw=RawSienna, fill=LimeGreen] at (0,3) {};
    \vertex[draw=RawSienna, fill=LimeGreen] at (0,0) {};
    \vertex[draw=RawSienna, fill=LimeGreen] at (1,1) {};
    \vertex[draw=RawSienna, fill=LimeGreen] at (2,3) {};
    \vertex[draw=RawSienna, fill=LimeGreen] at (4,3) {};
    \vertex[draw=RawSienna, fill=LimeGreen] at (5,3) {};
    \vertex[draw=RawSienna, fill=LimeGreen] at (3,2) {};
\end{scope}
\end{tikzpicture}
\caption{A $\nu$-binary tree (left) and a $\nu$-Schr\"oder tree (right), where $\nu=\nu(3,5)$. }
\label{fig:nuTrees}
\end{figure}
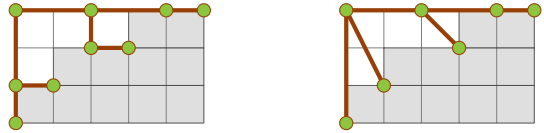
\label{ex:nuTrees}
\end{example}

\begin{definition} Let $p$, $q$ and $r$ be nodes in a $\nu$-Schr\"oder tree $S$ such that either $p$ is the first node above $q$ and $r$ is the first node to the right of $q$, or $p$ is the first node to the left of $q$ and $r$ is the first node below $q$. We define a \textit{contraction of $S$ at node $q$} as the $\nu$-Schr\"oder tree resulting from removing the node $q$ from $S$. When $p$ is above $q$ we call it a \textit{right contraction}, when $q$ is above $r$ we call it a \textit{left contraction}. Define a \textit{rotation at $q$} by removing the point $q$ and placing it in the other corner of the box determined by $p$ and $r$. If $p$ is above $q$, we call it a \textit{right rotation}, and if $q$ is above $r$, we call it a \textit{left rotation}. There is a third contraction possible, namely when $r$ is southeast of a non-leaf node $p$, with neither corner of the box determined by $p$ and $r$ containing a node of $S$. If removing the node $p$ yields a $\nu$-Schr\"oder tree, the removal of $p$ will be called a \textit{diagonal contraction}.  
\end{definition} 
Figures~\ref{LRcontraction} and $\ref{diagContraction}$ give diagrammatic illustrations of these definitions.

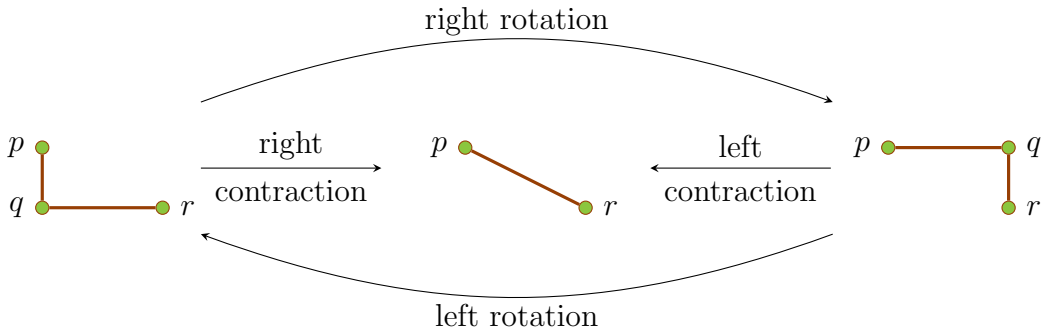
\begin{figure}[ht!]
\begin{tikzpicture}
\begin{scope}[xshift = 0, yshift = 0, scale=0.4]
	\vertex[draw=RawSienna, fill=LimeGreen, label=right:{$r$}] (r) at (4,0) {};
	\vertex[draw=RawSienna, fill=LimeGreen, label=left:{$q$}] (q)  at (0,0)  {};
	\vertex[draw=RawSienna, fill=LimeGreen, label=left:{$p$}] (p)  at (0,2)  {};	
	
	\draw[very thick, color=RawSienna] (p) to (q);
	\draw[very thick, color=RawSienna] (r) to (q);
\end{scope}
\begin{scope}[xshift=160, yshift=0, scale=0.4]
	\vertex[draw=RawSienna, fill=LimeGreen, label=right:{$r$}] (x)  at (4,0)  {};
	\vertex[draw=RawSienna, fill=LimeGreen, label=left:{$p$}] (z)  at (0,2)  {};	
	
	\draw[very thick, color=RawSienna] (x) to (z);
\end{scope}
\begin{scope}[xshift=320, yshift=0, scale=0.4]
	\vertex[draw=RawSienna, fill=LimeGreen, label=right:{$r$}] (x)  at (4,0)  {};
	\vertex[draw=RawSienna, fill=LimeGreen, label=right:{$q$}] (y)  at (4,2)  {};
	\vertex[draw=RawSienna, fill=LimeGreen, label=left:{$p$}] (z)  at (0,2)  {};	
	
	\draw[very thick, color=RawSienna] (x) to (y);
	\draw[very thick, color=RawSienna] (z) to (y);
\end{scope}
\begin{scope}[xshift=60, yshift=15, scale=0.6]
	\draw[-stealth] (0,0) to (4,0);
	\node[] (1) at (2,0.5) {right};
	\node[] (2) at (2,-0.5) {contraction};	
\end{scope}
\begin{scope}[xshift=230, yshift=15, scale=0.6]
	\draw[stealth-] (0,0) to (4,0);
	\node[] (1) at (2,0.5) {left};
	\node[] (2) at (2,-0.5) {contraction};
\end{scope}
\begin{scope}[xshift=60, yshift=40, scale=0.4]
    \draw[stealth-] (21,0) to[bend right=20] (0,0);
	\node[] (1) at (10.5,2.7) {right rotation};
\end{scope}
\begin{scope}[xshift=60, yshift=-10, scale=0.4]
    \draw[stealth-] (0,0) to[bend right=20] (21,0);
	\node[] (1) at (10.5,-2.7) {left rotation};    
\end{scope}
\end{tikzpicture}
\caption{A right and left contraction as intermediate steps in a right and left rotation, respectively.}
\label{LRcontraction}
\end{figure}

\begin{figure}[ht!]
\begin{tikzpicture}[scale=0.5]
\begin{scope}
    \draw[help lines] (0,0) grid (5,3);
    \draw[fill=gray, opacity=0.25] (0,0) rectangle (5,1); 
    \draw[fill=gray, opacity=0.25] (1,1) rectangle (5,2);
    \draw[fill=gray, opacity=0.25] (3,2) rectangle (5,3);

    \draw[ultra thick, RawSienna] (0,0) -- (0,3) -- (5,3);   
    \draw[ultra thick, RawSienna] (0,3) -- (1,2) -- (1,1);   
    \draw[ultra thick, RawSienna] (1,2) -- (3,2);
    
    \vertex[draw=RawSienna, fill=LimeGreen] at (0,3) {};
	\vertex[draw=RawSienna, fill=LimeGreen] at (0,3) {};		
	\vertex[draw=RawSienna, fill=LimeGreen] at (0,0) {};
    \vertex[draw=RawSienna, fill=LimeGreen] at (1,1) {};
	\vertex[draw=RawSienna, fill=LimeGreen] at (1,2) {};
    \vertex[draw=RawSienna, fill=LimeGreen] at (2,2) {};
	\vertex[draw=RawSienna, fill=LimeGreen] at (4,3) {};
	\vertex[draw=RawSienna, fill=LimeGreen] at (5,3) {};	
	\vertex[draw=RawSienna, fill=LimeGreen] at (3,2) {};
	
    \node at (3,-1) {};
\end{scope}
\begin{scope}[xshift=200,yshift=45]
    \draw[stealth-] (5,0) to[bend right=0] (0,0);
	\node[] (1) at (2.5,0.6) {diagonal};
    \node[] (1) at (2.5,-0.4) {contraction};
\end{scope}
\begin{scope}[xshift=400]
    \draw[help lines] (0,0) grid (5,3);
    \draw[fill=gray, opacity=0.25] (0,0) rectangle (5,1); 
    \draw[fill=gray, opacity=0.25] (1,1) rectangle (5,2);
    \draw[fill=gray, opacity=0.25] (3,2) rectangle (5,3);

    \draw[ultra thick, RawSienna] (0,0) -- (0,3) -- (5,3);   
    \draw[ultra thick, RawSienna] (0,3) -- (2,2) -- (3,2);   
    \draw[ultra thick, RawSienna] (0,3) -- (1,1);
    
	\vertex[draw=RawSienna, fill=LimeGreen] at (0,3) {};		
	\vertex[draw=RawSienna, fill=LimeGreen] at (0,0) {};
    \vertex[draw=RawSienna, fill=LimeGreen] at (1,1) {};
    \vertex[draw=RawSienna, fill=LimeGreen] at (2,2) {};
	\vertex[draw=RawSienna, fill=LimeGreen] at (4,3) {};
	\vertex[draw=RawSienna, fill=LimeGreen] at (5,3) {};	
	\vertex[draw=RawSienna, fill=LimeGreen] at (3,2) {};
\end{scope}
\end{tikzpicture}
\caption{A diagonal contraction at the node (1,2).}
\label{diagContraction}
\end{figure}
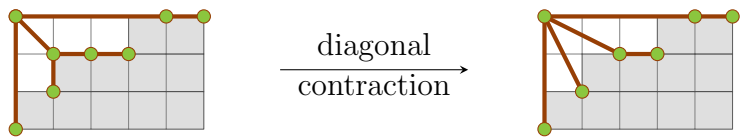

\begin{remark}
The term contraction comes from noticing that removing the node $q$ is equivalent to contracting the edge between $q$ and its neighbor closest to the root. 
The tree on the right in Figure
~\ref{fig:nuTrees} is formed from the tree on the left by contracting at the points $(0,1)$ and $(2,2)$. 
Performing a contraction on a $\nu$-binary tree $T$ can be thought of as an intermediate step in a left or right rotation of $\nu$-binary trees as defined in~\cite{CPS20}. 
See Figure~\ref{LRcontraction}. 
\end{remark}

\begin{proposition}
The set of $\nu$-Schr\"oder trees is the set of trees obtained from contracting $\nu$-binary trees. 
\end{proposition}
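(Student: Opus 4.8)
The plan is to prove the two inclusions separately: every tree obtained by contracting a $\nu$-binary tree is a $\nu$-Schr\"oder tree, and conversely every $\nu$-Schr\"oder tree arises this way.

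For the first inclusion I would first note that a $\nu$-binary tree is a $\nu$-Schr\"oder tree: its nodes are pairwise $\nu$-compatible by definition, it contains the root because the top-left corner of $R_\nu$ is never strictly southwest or northeast of another lattice point and hence belongs to every maximal $\nu$-compatible set, and it meets every row and column, which is part of the structure theory of the $\nu$-trees of~\cite{CPS20} (alternatively, an empty row, resp.\ column, would let us adjoin its left-most, resp.\ top-most, lattice point without violating compatibility, contradicting maximality). Next, a single contraction sends a $\nu$-Schr\"oder tree to a $\nu$-Schr\"oder tree: in a left or right contraction the deleted node $q$ is not the root (it has a node directly above it or directly to its left), deleting a node cannot create an incompatible pair, and the column and row of $q$ remain occupied (by $p$ and $r$), while a diagonal contraction outputs a $\nu$-Schr\"oder tree by its very definition. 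Induction on the number of contractions finishes this direction.

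For the converse, given a $\nu$-Schr\"oder tree $S$ I would produce a $\nu$-binary tree above it together with a chain of contractions down to $S$. Since $R_\nu$ is finite, $S$ extends to a maximal pairwise $\nu$-compatible set, that is, to a $\nu$-binary tree $T\supseteq S$. The engine is the following lemma: if $S\subsetneq S'$ are $\nu$-Schr\"oder trees, then some node $q\in S'\setminus S$ can be removed from $S'$ by a left, right, or diagonal contraction, and the result still contains $S$ (automatically, since $q\notin S$). Iterating the lemma on $T,T_1,T_2,\dots$ produces a chain $T\supsetneq T_1\supsetneq\cdots\supsetneq S$ of $\nu$-Schr\"oder trees, each step a contraction, so $S$ is a contraction of $T$. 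To prove the lemma I would take $q=(x_0,y_0)$ to be the node of $S'\setminus S$ with maximal $x$-coordinate, and among those maximal $y$-coordinate, so that every node of $S'$ lying strictly above $q$ in its column, or strictly to the right of $q$ in its row, already belongs to $S$. Since $S$, hence $S'$, occupies the row and column of $q$, there is an $S$-node in column $x_0$ at some height $y^*\neq y_0$ and an $S$-node in row $y_0$ at some $x^*\neq x_0$, and I would run through the four sign patterns of $(y^*-y_0,\,x^*-x_0)$.

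If $y^*>y_0$ and $x^*<x_0$, then the first node of $S'$ above $q$ (which lies in $S$ by the choice of $q$) and the node $(x^*,y_0)\in S$ are strictly southwest/northeast of one another, and the rectangle between them lies entirely in $R_\nu$: it consists of lattice points weakly above and weakly to the left of $q\in R_\nu$, and $R_\nu$ is closed under passing to such points; this contradicts pairwise $\nu$-compatibility of $S$, so this pattern cannot occur. If $y^*>y_0$ and $x^*>x_0$, then $q$ has a node above it and a node to its right, and a right contraction at $q$ applies; if $y^*<y_0$ and $x^*<x_0$, a left contraction at $q$ applies. The remaining pattern, $y^*<y_0$ and $x^*>x_0$, is the delicate one: if $q$ additionally has any node of $S'$ above it in column $x_0$, or any node of $S'$ to its left in row $y_0$, then a right, resp.\ left, contraction at $q$ still applies; otherwise $q$ is the top-most node of $S'$ in its column and the left-most in its row, its parent is attached through an empty box, and the task reduces to producing a legal diagonal contraction at $q$ — whose two relevant box corners, directly right of $q$ in row $y_0$ and directly below $q$ in column $x_0$, are then empty by assumption. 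The main obstacle is making this last pattern airtight: verifying that a node southeast of $q$ is present, so that $q$ is a non-leaf with an admissible $r$, and that deleting $q$ leaves a $\nu$-Schr\"oder tree, or, if that fails, replacing $q$ by another node of $S'\setminus S$ that is contractible. This is exactly the place where the fine interplay between the geometry of $R_\nu$ and the box-parent structure of $\nu$-Schr\"oder trees must be exploited, and I expect it to be the most delicate part of the argument.
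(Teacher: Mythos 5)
Your overall strategy coincides with the paper's: contractions preserve the $\nu$-Schr\"oder property, so contracting a $\nu$-binary tree yields a $\nu$-Schr\"oder tree, and conversely a $\nu$-Schr\"oder tree $S$ extends to a maximal $\nu$-compatible set $T$ which is then contracted down to $S$. The forward direction is fine. The genuine gap is the one you flag yourself: in the last sign pattern you never verify that removing $q$ is a legal diagonal contraction, and you even leave open the possibility of having to ``replace $q$ by another node,'' which would leave the induction unproved. As written, the converse inclusion is therefore not established.

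The gap closes once you notice that your extremal choice of $q$ and the row/column bookkeeping via $S$-nodes are unnecessary. Since $S$ is itself a $\nu$-Schr\"oder tree and $S\subseteq S'\setminus\{q\}$ for every $q\in S'\setminus S$, the root survives and every row and column stays occupied after \emph{any} removal, and $\nu$-compatibility passes to subsets; so every intermediate configuration is automatically a $\nu$-Schr\"oder tree, and the nodes of $T\setminus S$ may be deleted in any order (exactly what the paper asserts). It only remains to see that each single removal is one of the three moves, and this follows from the dichotomy the paper records after the definition of the plane tree: a node cannot have both a node above it in its column and a node to its left in its row, since those two points would span a rectangle inside $R_\nu$ and be $\nu$-incompatible. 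Hence either $q$ has a node above it, in which case the surviving node in its row (which exists because $S$ covers the row) lies to its right and a right contraction applies; or symmetrically a left contraction applies; or $q$ has neither, so it is joined to its parent through an empty box and the surviving nodes in its column and row must lie below and to the right of $q$ --- precisely the two children needed for a diagonal contraction, whose extra condition ``removal yields a $\nu$-Schr\"oder tree'' you get for free from the previous sentence. Your worry about exhibiting ``a node southeast of $q$'' comes from casting $q$ in the role of the deleted parent in the definition of diagonal contraction; as the paper's later use of the move makes explicit, the deleted node is the diagonally attached one and what is required is a child directly below and a child directly to its right, which your case hypotheses already supply.
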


\begin{proof}
Since contraction always leaves at least one node in every row and column, performing a sequence of contractions on a $\nu$-binary tree results in a $\nu$-Schr\"oder tree. Conversely, given a $\nu$-Schr\"oder tree $T$, it is contained in a maximal set of $\nu$-compatible nodes, that is, a $\nu$-binary tree $T'$. Contracting $T'$ at the nodes not appearing in $T$ in any order yields $T$. 
\end{proof}

\begin{remark}
Since the set of $\nu$-binary trees determine a set of binary trees with labels {\em left} and {\em right} \cite[Lemma 2.4]{CPS20}, we can define the set of $\nu$-Schr\"oder trees as the set of labeled trees resulting from contracting internal edges in the corresponding set of binary trees. When contracting at a node $p$ labeled {\em left} or {\em right}, assign the label {\em middle} to all children with a different label than $p$. If $p$ has label {\em middle}, assign the label {\em middle} to all of its children.
Relabel the left and right children $E$ and $N$ respectively. In a contraction at $q$, each child of $q$ receives the label $D$. 
\end{remark} 

Next, we show that the leaves of a  $\nu$-Schr\"oder tree determine the path $\nu$, and vice versa. As a result, the path $\nu$ can be read from any $\nu$-Schr\"oder tree. 

\begin{lemma}[{\cite[Lemma 2.2]{CPS20}}] \label{WNlemma}
A non-root node in a $\nu$-binary tree has a node above it in the same column or to its left in the same row. 
\end{lemma}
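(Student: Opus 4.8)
This is cited from \cite[Lemma 2.2]{CPS20}, so strictly speaking nothing needs to be proved, but here is the argument one would give. The plan is to argue by contradiction using the root node and the definition of $\nu$-incompatibility. Suppose $p$ is a non-root node of a $\nu$-binary tree $T$ which has no node of $T$ directly above it in its column and no node of $T$ directly to its left in its row. I would first use the presence of the root $(0,b)$ to locate a witness point: since the root lies weakly northwest of (or in the same row/column as) every point of $R_\nu$, consider the set of points of $T$ lying strictly northwest of $p$ (in the sense of having both coordinates no larger, and not equal to $p$); the root is one such, so this set is nonempty. Among these, pick a point $q$ whose bounding box with $p$ is minimal, or more carefully, argue that \emph{some} point of $T$ must be $\nu$-incompatible with $p$, contradicting maximality's consistency — no wait, the point is the opposite: I must show that if neither a ``north'' nor a ``west'' neighbor exists, then $p$ could be removed or $p$ is incompatible with something, which is fine; the real claim is that such a $p$ \emph{cannot} occur. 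Let me restructure.

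The cleaner route: assume $p=(x,y)$ is a non-root node with no node of $T$ above it in column $x$ and no node of $T$ to its left in row $y$. I would show $T \cup \{q\}$ remains $\nu$-compatible for a suitable $q$ strictly northwest of $p$, or rather derive the contradiction directly. Consider the largest $x' < x$ such that column $x'$ of $R_\nu$ contains a point of $T$ weakly above the row $y$ — such a column exists because column $0$ contains the root $(0,b)$ with $b \geq y$. Within that column, let $q$ be the lowest node of $T$ that is weakly above row $y$; so $q=(x',y')$ with $y' \geq y$. The box spanned by $p$ and $q$ has its northwest corner at $q$ (since $x' < x$ and $y' \geq y$), and by $\nu$-compatibility of $\{p,q\}$ together with $p$ being southwest… here I must check: $p$ is southeast of $q$, so they are \emph{not} in the SW–NE relation, hence automatically $\nu$-compatible — that is not a contradiction. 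So the incompatibility I want must come from a point in the \emph{SW–NE} relation with $p$.

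Therefore the argument should instead be: look \emph{above and to the right}, or reconsider. The honest statement of the obstacle: the lemma says a non-root node must have a parent of a very specific kind (immediately north or immediately west), which is exactly what makes the plane-tree embedding in the paragraph after Definition~\ref{defn.nutrees} well-defined. The key step is to show that the ``diagonal box'' parent described there (smallest box containing $p$ and one other node $q$, with $q$ at the NW corner) \emph{cannot} occur for a $\nu$-\emph{binary} tree, only for a genuinely Schr\"oder tree — i.e., if such a box exists with its other two corners empty of $T$, then one of those corners is a lattice point of $R_\nu$ that is $\nu$-compatible with everything in $T$, contradicting maximality. So the main step is: given the diagonal-box situation, exhibit a lattice point $w$ (one of the two empty corners of the box) lying in $R_\nu$ such that $T \cup \{w\}$ is pairwise $\nu$-compatible; this uses that the box is contained in $R_\nu$ (so $w \in R_\nu$), that $w$ shares a row or column with $p$ (handling compatibility with $p$), and that any node incompatible with $w$ would, via the box geometry, already be incompatible with $p$ or with $q$. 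The main obstacle is precisely this last compatibility bookkeeping: verifying that adding $w$ introduces no new $\nu$-incompatibility, which requires carefully tracing how a hypothetical node $z$ with $z$ SW or NE of $w$ forces an incompatible pair already present in $T$. Once that is done, maximality of the $\nu$-binary tree is contradicted, so the diagonal box cannot arise, leaving only the north-neighbor or west-neighbor cases, which is the assertion.
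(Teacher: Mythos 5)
The paper offers no proof of Lemma~\ref{WNlemma} --- it is quoted from \cite[Lemma~2.2]{CPS20} --- so there is nothing internal to compare against; I will assess your plan on its own terms. The first half of your write-up pursues, and correctly abandons, a dead end (a node strictly northwest of $p$ is automatically $\nu$-compatible with $p$, so it can never furnish the contradiction), and should simply be cut. Your restructured plan is sound, and the ``compatibility bookkeeping'' you defer does go through. To record it: with $p=(x,y)$ lacking a node above it in its column and to its left in its row, the root lies strictly northwest of $p$ (otherwise it would be such a node), so one may choose $q=(x',y')\in T$ strictly northwest of $p$ whose bounding box $B$ with $p$ is minimal; your hypotheses on $p$ together with this minimality force $B\cap T=\{p,q\}$. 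Since $R_\nu$ is closed under moving north and west, a box lies in $R_\nu$ exactly when its southeast corner does; as $p$ is the southeast corner of $B$, all of $B$, and in particular the corner $w=(x,y')$, lies in $R_\nu$. Now $w$ shares a column with $p$ and a row with $q$, hence is compatible with both, and for any $z\in T$ that is $\nu$-incompatible with $w$ one checks: if $z$ is northeast of $w$ it is already incompatible with $q$; if $z$ is southwest of $w$ and strictly below row $y$ it is incompatible with $p$; if $z$ is southwest of $w$ and strictly left of column $x'$ it is incompatible with $q$ (in each case because the box of $z$ with $p$ or $q$ has its southeast corner inside the box of $z$ and $w$, hence lies in $R_\nu$); and otherwise $z$ is a third point of $T$ inside $B$, which is excluded. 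This last case is the one spot where your blanket claim that ``any node incompatible with $w$ would already be incompatible with $p$ or with $q$'' is not literally what happens --- there the contradiction comes from the minimality of $B$ rather than from an incompatibility --- but the argument closes all the same: $T\cup\{w\}$ is pairwise $\nu$-compatible with $w\notin T$, contradicting the maximality of the $\nu$-binary tree.
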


\begin{proposition}
A node in a $\nu$-Schr\"oder tree is a leaf if and only if it is the starting point of a vertical run or an end point of a horizontal run in $\nu$. 
\end{proposition}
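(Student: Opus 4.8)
The plan is to reduce the statement for $\nu$-Schröder trees to the already-known characterization of leaves in $\nu$-binary trees, using the fact (proved just above) that every $\nu$-Schröder tree is obtained from some $\nu$-binary tree by a sequence of contractions. First I would pin down what "leaf" means here: a node is a leaf of the associated non-crossing plane tree precisely when no other node hangs off it, i.e.\ no node lies immediately below it in its column, no node lies immediately to its right in its row, and it is not the northwest corner of a minimal box whose southeast corner is another node. So the task is to show that this geometric condition on a node $p$ is equivalent to $p$ being the bottom point of a vertical run of $\nu$ or the right endpoint of a horizontal run of $\nu$.

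For the $\nu$-binary tree case, I would first establish the statement for a maximal tree $T'$. The key tool is Lemma~\ref{WNlemma}: every non-root node has a node above it in its column or to its left in its row, and (by $\nu$-compatibility) not both. Given a lattice point $p=(x,y)$ in $R_\nu$ that is the starting point of a vertical run of $\nu$ (so the square below-left of $p$ is outside $R_\nu$ on one side) or the endpoint of a horizontal run, I would argue that adding any node strictly below $p$ in its column, or strictly right of $p$ in its row, or placing $p$ as the NW-corner of a box with SE-corner in $T'$, forces a pair of $\nu$-incompatible points or exits $R_\nu$; hence such $p$ must be a leaf. Conversely, if $p$ is \emph{not} at the start of a vertical run and not at the end of a horizontal run, then both the point directly below $p$ and the point directly to the right of $p$ lie in $R_\nu$, and by maximality of $T'$ together with Lemma~\ref{WNlemma} at least one of these "slots" (or the box-child slot) must be occupied — so $p$ has a child and is not a leaf. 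One has to be slightly careful at the boundary of the rectangle (top row, right column) and at the root $(0,b)$, which I would handle as explicit small cases: the root is never a leaf since $\nu$ starts at $(0,0)$ and every row/column must be nonempty.

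Next I would propagate this to an arbitrary $\nu$-Schröder tree $T$ obtained from $T'$ by contractions. The crucial observation is that contraction at a node $q$ deletes $q$ and its edge to its root-side neighbor, reattaching $q$'s children to that neighbor; this operation never changes which lattice points are \emph{present} as leaves in the relevant sense — more precisely, a node $p\neq q$ is a leaf of $T$ iff it was a leaf of $T'$, because the contracted node $q$ is by definition an \emph{internal} node of $T'$ (it has both a root-side neighbor and at least one child), so removing it does not turn any surviving node into a new leaf, nor does it destroy a leaf. Since contractions only ever remove internal nodes, the leaf set of $T$ is exactly the leaf set of $T'$ restricted to the surviving nodes; but in fact every leaf of $T'$ survives (a leaf is never the node removed in a contraction), so $T$ and $T'$ have the same leaves. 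Combining this with the $\nu$-binary case, and with the fact — which follows from "each row and column of a $\nu$-Schröder tree contains at least one node" — that the geometric leaf positions are the same data as the turning structure of $\nu$, gives the claim.

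The main obstacle I anticipate is the "box-child" case in the converse direction for $\nu$-binary trees: showing that when $p$ is not at the start of a vertical run and not at the end of a horizontal run, \emph{some} child slot is genuinely occupied in the maximal tree $T'$. The column-below and row-right slots might both be empty if $\nu$-compatibility blocks filling them, and then one must produce a node $q$ to the southeast of $p$ with $p$ as the NW-corner of the minimal box; arguing this requires a careful use of maximality of $T'$ and of Lemma~\ref{WNlemma} applied to the points along the column below $p$ and the row right of $p$ inside $R_\nu$. I would organize this as: consider the lowest point of $T'$ in $p$'s column that is weakly below $p$, and the rightmost point of $T'$ in $p$'s row weakly right of $p$, and show one of these yields the required child, tracking the position of $\nu$ to guarantee the relevant squares stay in $R_\nu$.
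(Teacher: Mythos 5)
You take essentially the same route as the paper: reduce to $\nu$-binary trees via the observation that contractions delete only internal nodes and hence preserve the leaf set, handle the forward direction by noting that a vertical-run start or horizontal-run end has no point of $R_\nu$ to its south or east, and attack the converse with maximality plus Lemma~\ref{WNlemma}. Two remarks. First, your claim that when $p$ is at neither kind of run-boundary \emph{both} the point below $p$ and the point to the right of $p$ lie in $R_\nu$ is false --- for $p$ in the interior of a horizontal run of $\nu$ only the point to its right does --- but the argument only needs one such point $s$, which is all the paper uses. Second, for the obstacle you flag (producing an occupied child slot when the column-below and row-right slots appear empty), the paper's resolution is: since $p$ is assumed to be a leaf, $s\notin T$, so by maximality $s$ is $\nu$-incompatible with some $q\in T$; $\nu$-compatibility of $q$ with $p$ then forces $q$ to lie in the box to the south/southeast of $p$, and repeated application of Lemma~\ref{WNlemma} to the nodes inside that box produces a path in $T$ from $q$ up to $p$, contradicting that $p$ is a leaf. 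Your extremal-node variant should also close the gap, but it would need the same kind of iterated use of Lemma~\ref{WNlemma} to connect the chosen node back to $p$, so it is not materially different or shorter.
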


\begin{proof}
Note that contraction does not change the number of leaves, thus by Lemma~\ref{WNlemma} it suffices to only consider $\nu$-binary trees. 
If a node in a $\nu$-binary tree $T$ occurs at the starting point of a vertical run of $\nu$ or at the end point of a horizontal run, then it must be a leaf as it cannot have any nodes to its south or east. 

Conversely let $p$ be a leaf in $T$. Suppose toward a contradiction that $p$ is not the starting point of a vertical run or the end point of a horizontal run in $\nu$. 
Thus there is a lattice point $s \in R_\nu$ either below or to the right of $p$. Assume without loss of generality that $s$ is to the right of $p$. 
Then since $T$ is maximal, $s$ is not a node of $T$, and so it is $\nu$-incompatible with some $q \in T$. 
Since $q$ is $\nu$-compatible with $p$, it must be south or southeast of $p$. 
If it is south of $p$, then $s$ and $p$ are connected by an edge, and $p$ is not a leaf. 
If $s$ is southeast of $p$, then by Lemma~\ref{WNlemma} there is a node $t\in T$ that is either west of $s$ or north of $s$. 
By $\nu$-compatibility with $p$, the node $t$ cannot be southwest or northeast of $p$, thus $t$ is on the boundary of the rectangular box determined by $p$ and $s$. 
The nodes $s$ and $t$ are connected by an edge in $T$. Iterating the argument using $p$ and the point inside the box generates a path from $s$ to $p$, hence $p$ is not a leaf. 
It follows that $p$ is the starting point of a vertical run or the end point of a horizontal run in $\nu$.
\end{proof}

Since a $\nu$-Dyck path is determined by its horizontal and vertical runs, we have the following corollary. 

\begin{corollary}
The path $\nu$ is determined by a $\nu$-Schr\"oder tree. 
\qed
\end{corollary}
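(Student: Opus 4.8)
The statement to prove is the final Corollary: that the path $\nu$ is determined by a $\nu$-Schröder tree. The plan is to show that this is an immediate consequence of the preceding Proposition together with the standard fact (cited just before the corollary) that a $\nu$-Dyck path is reconstructed from the data of its horizontal and vertical runs. First I would observe that any $\nu$-Schröder tree $S$ is, by definition, a collection of $\nu$-compatible lattice points in $R_\nu$ with at least one point in every row and every column. From the Proposition, the leaves of $S$ are precisely the lattice points that are the starting point of a vertical run of $\nu$ or the end point of a horizontal run of $\nu$. So from $S$ alone we can read off, purely combinatorially, the set of these distinguished lattice points.

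Next I would explain why this set of points pins down $\nu$ uniquely. The starting points of the vertical runs of $\nu$, together with the end points of the horizontal runs of $\nu$, record exactly where $\nu$ changes direction: a vertical run of $\nu$ begins at a point where $\nu$ switches from an $E$ step (or the origin) to an $N$ step, and a horizontal run ends at a point where $\nu$ switches from an $E$ step to an $N$ step (or reaches the terminal corner $(b,a)$). Reading these lattice points in order of increasing $x$-coordinate (breaking ties by $y$-coordinate, though in fact the relevant points have distinct $x$-coordinates except possibly at the endpoints) reconstructs the sequence of turning points of $\nu$, and hence $\nu$ itself as the unique staircase path through those turning points inside the rectangle determined by $(0,0)$ and $(b,a)$. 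Equivalently, and perhaps more cleanly, I would phrase it as: the leaves of $S$ determine the multiset of horizontal run lengths and vertical run lengths of $\nu$ in order, and since a $\nu$-Dyck path — in particular $\nu$ itself, viewed as the lowest $\nu$-Dyck path — is determined by its horizontal and vertical runs, $\nu$ is determined.

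The argument is essentially a one-line deduction, so the only real content is making precise the bookkeeping in the previous paragraph; the main (mild) obstacle is simply being careful at the two extreme rows/columns, namely verifying that the leaf at the top-left behavior (the root region) and the leaf structure at the bottom-right corner are consistent with the convention that $\nu$ runs from $(0,0)$ to $(b,a)$, so that no ambiguity about the first or last run arises. Since the dimensions $a$ and $b$ are themselves recoverable (the tree lives in $R_\nu \subseteq [0,b]\times[0,a]$ and every row and column is occupied), this causes no difficulty, and the proof can be stated in a couple of sentences invoking the Proposition and the run-reconstruction fact.
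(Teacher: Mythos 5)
Your argument is correct and follows the paper's intended (one-line) proof exactly: identify the leaves of the tree, invoke the preceding Proposition to see that they are precisely the starting points of vertical runs and end points of horizontal runs of $\nu$, and then use the fact that a lattice path is determined by its runs (equivalently, by its valleys together with the dimensions of the grid, which are recoverable since every row and column of the tree is occupied). The extra bookkeeping you supply about the extreme rows and columns is harmless and does not change the substance.
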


\begin{remark}
When $\nu = (NE)^n$ we recover the classical Schr\"oder trees, that is, trees with $n+1$ leaves where each non-leaf node has at least two children.
\end{remark}

\subsection{The bijection between $\nu$-Schr\"oder trees and $\nu$-Schr\"oder paths.}
\label{subsec.bijection}
The bijection $\varphi:\calT_\nu\rightarrow \calP_\nu$ given here between $\nu$-Schr\"oder trees and $\nu$-Schr\"oder paths is a generalization of the bijection between $\nu$-binary trees and $\nu$-Dyck paths given by Ceballos, Padrol and Sarmiento~\cite[Theorem 3.3]{CPS20}.  
Given a $\nu$-Schr\"oder tree $T$, we assign labels $N$, $E$ and $D$ to its non-root nodes as follows: if its parent node is in the same column then label it $N$, if its parent node is in the same row then label it $E$, and if its parent node is in neither then label it $D$. 

First define a \textit{right-flushing} map $\mathcal{R}$, which takes a $\nu$-Schr\"oder path $\mu$ and maps it to a $\nu$-Schr\"oder tree $T = \mathcal{R}(\mu)$ by right-flushing the lattice points of $\mu$ as follows. 
Begin by labeling the points in $\mu$ in the order they appear on the path, as it is traversed from the origin to $(b,a)$.
Starting from the bottom row in $R_\nu$ and proceeding upward, place the points in the same row of $R_\nu$ from right to left as far right as possible, while avoiding $x$-coordinates forbidden by previously right-flushed rows. 
An $x$-coordinate is {\em forbidden} if it corresponds to the initial point of an $E$ or $D$ step in $\mu$. 
We claim that the lattice points obtained by right-flushing all the lattice points in $\mu$ are the nodes of a $\nu$-Schr\"oder tree $T$.  
See the top of Figure~\ref{flushingMaps} for an example of the right-flushing map $\mathcal{R}$. 

We first check that $\mathcal{R}$ is well-defined. It is not immediately clear that right-flushing is always possible on a row, that is, that there is always an $x$-coordinate available in a row for the placement of a node. 
To verify that placing a node is always possible, suppose that we are right-flushing a point $p$ in the $\nu$-Schr\"oder path $\mu$. 
Let $\overline{p}$ denote the node to which $p$ is right-flushed. 
We need the number of lattice points in the row in $R_\nu$ on which $\overline{p}$ lies to be greater than the number of forbidden $x$-coordinates before $\overline{p}$. 
The latter is equal to the number of $E$ and $D$ steps before $p$. 
Let $\mathrm{horiz}_\nu(p)$ denote the maximal number of east steps that can be placed starting at $p$ before crossing $\nu$ (while remaining in the smallest rectangle containing $\nu$). For example, in Figure~\ref{flushingMaps}, $\mathrm{horiz}_\nu(4) = 2$ and $\mathrm{horiz}_\nu(9) = 3$.
The difference between the number of lattice points in the row with $p$ and the number of $E$ and $D$ steps before $p$ is equal to $\mathrm{horiz}_\nu(p) + 1$, and since this quantity is greater than or equal to one, there is a free column for the placement of $\overline{p}$. 

Next, we verify that $T=\mathcal{R}(\mu)$ is in fact a $\nu$-Schr\"oder tree. 
The construction guarantees the $\nu$-compatibility of the nodes, so it remains to verify the existence of the root, and that every row and column has a node. 
It is clear that every row has a node, as there is a lattice point of $\mu$ in every row. 
The total number of forbidden $x$-coordinates is the number of $E$ and $D$ steps in $\mu$, which is $b$, thus when flushing the last point of $\mu$, we must have $b$ forbidden $x$-coordinates, or in other words, nodes in $b$ columns. 
Note that the first column cannot be forbidden by any previous node, as such a forbidding node would correspond to a $E$ or $D$ step crossing $\nu$. 
Thus the last node must be placed in $(0,a)$, and so we have a node in each column, and a root.  

Now that $\mathcal{R}$ is well-defined, we define its inverse known as the {\em left-flushing} map $\mathcal{L}$, which left-flushes the nodes in a $\nu$-Schr\"oder tree $T$ to form a $\nu$-Schr\"oder path $\mu = \mathcal{L}(T)$ as follows.
First order the nodes in $T$ from bottom to top and right to left. 
Starting from the bottom row in $R_\nu$ and proceeding upward, place the nodes from left to right in the same row as far left as possible, while avoiding $x$-coordinates forbidden by previously left-flushed rows. 
The forbidden $x$-coordinates of a row are the $x$-coordinates of lattice points corresponding to nodes labeled $E$ or $D$ in $T$. 
We claim that the resulting collection of lattice points is a $\nu$-Schr\"oder path $\mu$. 
Note that by construction $\mu$ is the same $\nu$-Schr\"oder path as the one obtained by reading the labels in a post-order traversal of $T$. See the bottom of Figure~\ref{flushingMaps} for an example of the left-flushing map $\mathcal{L}$.

We verify that $\mathcal{L}$ is well-defined. First we check that left-flushing a node $\overline{p}$ in a $\nu$-tree is always possible, that is, that there is always an available lattice point of $R_\nu$ in the row of $\overline{p}$ in which to place $p$. 
We need more lattice points of $R_\nu$ on the row of $\overline{p}$ than the number of $x$-coordinates forbidden prior to $\overline{p}$. 
Let $\mathrm{hroot}_\nu(\overline{p})$ denote the number of nodes labeled $E$ or $D$ in the unique path from $\overline{p}$ to the root.
The difference between the number of lattice points of $R_\nu$ on the row of $\overline{p}$ and the number of $x$-coordinates forbidden prior to $\overline{p}$ is $\mathrm{hroot}_\nu(\overline{p}) + 1$. Since this quantity is greater than or equal to one, there is an available $x$-coordinate in the row of $\overline{p}$ in which to place $p$. 

It remains to check that $\mu=\mathcal{L}(T)$ is a $\nu$-Schr\"oder path. 
It is clear from the construction that $\mu$ is a lattice path with $N$, $E$ and $D$ steps. 
For any $\overline{p} \in T$ the quantity $\mathrm{hroot}_\nu(\overline{p})$ is one less than the difference between the number of lattice points of $R_\nu$ on the row of $\overline{p}$ and the number of $E$ and $D$ nodes read before $\overline{p}$, which is precisely $\mathrm{horiz}_\nu(p)$. 
Thus we have $\mathrm{hroot}_\nu(\overline{p}) = \mathrm{horiz}_\nu(p) \geq 0$ for any $p$, that is, $\mu$ lies weakly above $\nu$. 

\begin{figure}[ht!]
\begin{tikzpicture}
\begin{scope}[xshift=250, scale=0.5]
    \draw[help lines] (0,0) grid (7,6);
    \draw[fill=gray, opacity=0.25] (0,0) rectangle (7,1); 
    \draw[fill=gray, opacity=0.25] (1,1) rectangle (7,2);
    \draw[fill=gray, opacity=0.25] (3,2) rectangle (7,3);
    \draw[fill=gray, opacity=0.25] (5,3) rectangle (7,4);
    \draw[fill=gray, opacity=0.25] (5,4) rectangle (7,5);
    \draw[fill=gray, opacity=0.25] (6,5) rectangle (7,6);

    \draw [line join=round, decorate, decoration={
    zigzag, segment length=4,
    amplitude=.9,post=lineto,
    post length=2pt}]  (2,2) -- (2,6);
    \draw [line join=round, decorate, decoration={
    zigzag, segment length=4,
    amplitude=.9,post=lineto,
    post length=2pt}]  (3,2) -- (3,6);
    \draw [line join=round, decorate, decoration={
    zigzag, segment length=4,
    amplitude=.9,post=lineto,
    post length=2pt}]  (5,4) -- (5,6); 
    \draw [line join=round, decorate, decoration={
    zigzag, segment length=4,
    amplitude=.9,post=lineto,
    post length=2pt}]  (6,5) -- (6,6);    
    
    \draw[ultra thick, RawSienna] (0,0) -- (0,6) -- (7,6);
    \draw[ultra thick, RawSienna] (1,1) -- (1,6);    
    \draw[ultra thick, RawSienna] (4,6) -- (4,5) -- (6,5);
    \draw[ultra thick, RawSienna] (4,5) -- (5,4) -- (5,3);
    \draw[ultra thick, RawSienna] (1,6) -- (2,2) -- (3,2);
	
    \vertex[draw=RawSienna, fill=LimeGreen, label=above:\tiny{$\overline{12}$}] at (0,6) {};
	\vertex[draw=RawSienna, fill=LimeGreen, label=above:\tiny{$\overline{11}$}] at (1,6) {};
    \vertex[draw=RawSienna, fill=LimeGreen, label=below:\tiny{$\overline{1}$}] at (0,0) {};
	\vertex[draw=RawSienna, fill=LimeGreen, label=below:\tiny{$\overline{2}$}] at (1,1) {};
	\vertex[draw=RawSienna, fill=LimeGreen, label=below:\tiny{$\overline{4}$}] at (2,2) {};
    \vertex[draw=RawSienna, fill=LimeGreen, label=below:\tiny{$\overline{3}$}] at (3,2) {};
	\vertex[draw=RawSienna, fill=LimeGreen, label=above:\tiny{$\overline{10}$}] at (4,6) {};
    \vertex[draw=RawSienna, fill=LimeGreen, label=below:\tiny{$\overline{8}$}] at (4,5) {};
    \vertex[draw=RawSienna, fill=LimeGreen, label=right:\tiny{$\overline{6}$}] at (5,4) {};
    \vertex[draw=RawSienna, fill=LimeGreen, label=right:\tiny{$\overline{5}$}] at (5,3) {};
    \vertex[draw=RawSienna, fill=LimeGreen, label=right:\tiny{$\overline{7}$}] at (6,5) {};
    \vertex[draw=RawSienna, fill=LimeGreen, label=above:\tiny{$\overline{9}$}] at (7,6) {};  
\end{scope}
\begin{scope}[xshift=0, yshift=-140, scale=0.5]
    \draw[help lines] (0,0) grid (7,6);
    \draw[fill=gray, opacity=0.25] (0,0) rectangle (7,1); 
    \draw[fill=gray, opacity=0.25] (1,1) rectangle (7,2);
    \draw[fill=gray, opacity=0.25] (3,2) rectangle (7,3);
    \draw[fill=gray, opacity=0.25] (5,3) rectangle (7,4);
    \draw[fill=gray, opacity=0.25] (5,4) rectangle (7,5);
    \draw[fill=gray, opacity=0.25] (6,5) rectangle (7,6);

    \draw[ultra thick, RawSienna] (0,0) -- (0,6) -- (7,6);
    \draw[ultra thick, RawSienna] (1,1) -- (1,6);    
    \draw[ultra thick, RawSienna] (4,6) -- (4,5) -- (6,5);
    \draw[ultra thick, RawSienna] (4,5) -- (5,4) -- (5,3);
    \draw[ultra thick, RawSienna] (1,6) -- (2,2) -- (3,2);
    
    \vertex[draw=RawSienna, fill=LimeGreen] at (0,6) {};
	\vertex[draw=RawSienna, fill=LimeGreen] at (1,6) {};	
    \vertex[draw=RawSienna, fill=LimeGreen] at (0,0) {};
	\vertex[draw=RawSienna, fill=LimeGreen] at (1,1) {};
	\vertex[draw=RawSienna, fill=LimeGreen] at (2,2) {};
    \vertex[draw=RawSienna, fill=LimeGreen] at (3,2) {};
	\vertex[draw=RawSienna, fill=LimeGreen] at (4,6) {};
    \vertex[draw=RawSienna, fill=LimeGreen] at (4,5) {};	
    \vertex[draw=RawSienna, fill=LimeGreen] at (5,4) {};
    \vertex[draw=RawSienna, fill=LimeGreen] at (5,3) {};
    \vertex[draw=RawSienna, fill=LimeGreen] at (6,5) {};
    \vertex[draw=RawSienna, fill=LimeGreen] at (7,6) {};    

	\node[] at (-0.6,0) {\tiny{$N$}};
	\node[] at (0.6,1.5) {\tiny{$N$}};
	\node[] at (2.3,2.5) {\tiny{$D$}};
	\node[] at (3.3,2.5) {\tiny{$E$}};	
	\node[] at (4.6,2.5) {\tiny{$N$}};		
	\node[] at (4.6,3.5) {\tiny{$D$}};
	\node[] at (3.6,4.6) {\tiny{$N$}};
	\node[] at (1.2,6.6) {\tiny{$E$}};	
	\node[] at (5.6,5.5) {\tiny{$E$}};
	\node[] at (4.2,6.6) {\tiny{$E$}};
	\node[] at (7.2,6.6) {\tiny{$E$}};			
\end{scope}
\begin{scope}[xshift=0, yshift=0, scale=0.5]
    \draw[help lines] (0,0) grid (7,6);
    \draw[fill=gray, opacity=0.25] (0,0) rectangle (7,1); 
    \draw[fill=gray, opacity=0.25] (1,1) rectangle (7,2);
    \draw[fill=gray, opacity=0.25] (3,2) rectangle (7,3);
    \draw[fill=gray, opacity=0.25] (5,3) rectangle (7,4);
    \draw[fill=gray, opacity=0.25] (5,4) rectangle (7,5);
    \draw[fill=gray, opacity=0.25] (6,5) rectangle (7,6);

    \draw[very thick, red] (0,0) -- (0,2) -- (1,2) -- (2,3) -- (2,4) -- (3,5) -- (4,5) -- (4,6) --  (7,6);

    \vertex[fill=black, minimum size=4pt, label=left:{\tiny$1$}] at (0,0) {};
    \vertex[fill=black, minimum size=4pt, label=left:{\tiny$2$}] at (0,1) {};
    \vertex[fill=black, minimum size=4pt, label=left:{\tiny$3$}] at (0,2) {};
    \vertex[fill=black, minimum size=4pt] at (1,2) {};
    \vertex[fill=black, minimum size=4pt] at (2,3) {};
    \vertex[fill=black, minimum size=4pt] at (2,4) {};
    \vertex[fill=black, minimum size=4pt] at (3,5) {};
    \vertex[fill=black, minimum size=4pt] at (4,5) {};
    \vertex[fill=black, minimum size=4pt, label=above:{\tiny$9$}] at (4,6) {};
    \vertex[fill=black, minimum size=4pt, label=above:{\tiny$10$}] at (5,6) {};
    \vertex[fill=black, minimum size=4pt, label=above:{\tiny$11$}] at (6,6) {};
    \vertex[fill=black, minimum size=4pt, label=above:{\tiny$12$}] at (7,6) {};
    
    \node[] at (0.6,2.4) {\tiny{$4$}};
    \node[] at (1.6,3.3) {\tiny{$5$}};
    \node[] at (1.6,4.3) {\tiny{$6$}};
    \node[] at (2.7,5.3) {\tiny{$7$}};
    \node[] at (3.7,5.4) {\tiny{$8$}};  
\end{scope}
\begin{scope}[xshift=250, yshift=-140, scale=0.5]
    \draw[help lines] (0,0) grid (7,6);
    \draw[fill=gray, opacity=0.25] (0,0) rectangle (7,1); 
    \draw[fill=gray, opacity=0.25] (1,1) rectangle (7,2);
    \draw[fill=gray, opacity=0.25] (3,2) rectangle (7,3);
    \draw[fill=gray, opacity=0.25] (5,3) rectangle (7,4);
    \draw[fill=gray, opacity=0.25] (5,4) rectangle (7,5);
    \draw[fill=gray, opacity=0.25] (6,5) rectangle (7,6);

    \draw[very thick, red] (0,0) -- (0,2) -- (1,2) -- (2,3) -- (2,4) -- (3,5) -- (4,5) -- (4,6) --  (7,6);

    \node[circle, fill=black, inner sep=1.5pt] at (0,0) {};
    \node[circle, fill=black, inner sep=1.5pt] at (0,1) {};
    \node[circle, fill=black, inner sep=1.5pt] at (0,2) {};
    \node[circle, fill=black, inner sep=1.5pt] at (1,2) {};
    \node[circle, fill=black, inner sep=1.5pt] at (2,3) {};
    \node[circle, fill=black, inner sep=1.5pt] at (2,4) {};
    \node[circle, fill=black, inner sep=1.5pt] at (3,5) {};
    \node[circle, fill=black, inner sep=1.5pt] at (4,5) {};
    \node[circle, fill=black, inner sep=1.5pt] at (4,6) {};
    \node[circle, fill=black, inner sep=1.5pt] at (5,6) {};
    \node[circle, fill=black, inner sep=1.5pt] at (6,6) {};
    \node[circle, fill=black, inner sep=1.5pt] at (7,6) {};

    \draw [line join=round, decorate, decoration={
    zigzag, segment length=4,
    amplitude=.9,post=lineto,
    post length=2pt}]  (0,2) -- (0,6);    
    \draw [line join=round, decorate, decoration={
    zigzag, segment length=4,
    amplitude=.9,post=lineto,
    post length=2pt}]  (1,2) -- (1,6);
    \draw [line join=round, decorate, decoration={
    zigzag, segment length=4,
    amplitude=.9,post=lineto,
    post length=2pt}]  (2,4) -- (2,6);
    \draw [line join=round, decorate, decoration={
    zigzag, segment length=4,
    amplitude=.9,post=lineto,
    post length=2pt}]  (3,5) -- (3,6);
\end{scope}

\begin{scope}[xshift=150, yshift=40, scale=0.5]
    \draw[-stealth] (0,0) to [bend left=0] (4,0);
    \node[] at (2,1) {$\mathcal{R}$};
\end{scope}
\begin{scope}[xshift=150, yshift=-100, scale=0.5]
    \draw[-stealth] (0,0) to [bend left=0] (4,0);
    \node[] at (2,1) {$\mathcal{L}$};
\end{scope}
\end{tikzpicture}
\caption{The right-flushing map $\mathcal{R}$ (top) and the left-flushing map $\mathcal{L}$ (bottom). The action of $\mathcal{L}$ is equivalent to reading the labels of the $\nu$-Schr\"oder tree in post-order traversal starting at the root and going counter-clockwise.  The zigzag lines indicate the forbidden $x$-coordinates.
}
\label{flushingMaps}
\end{figure}
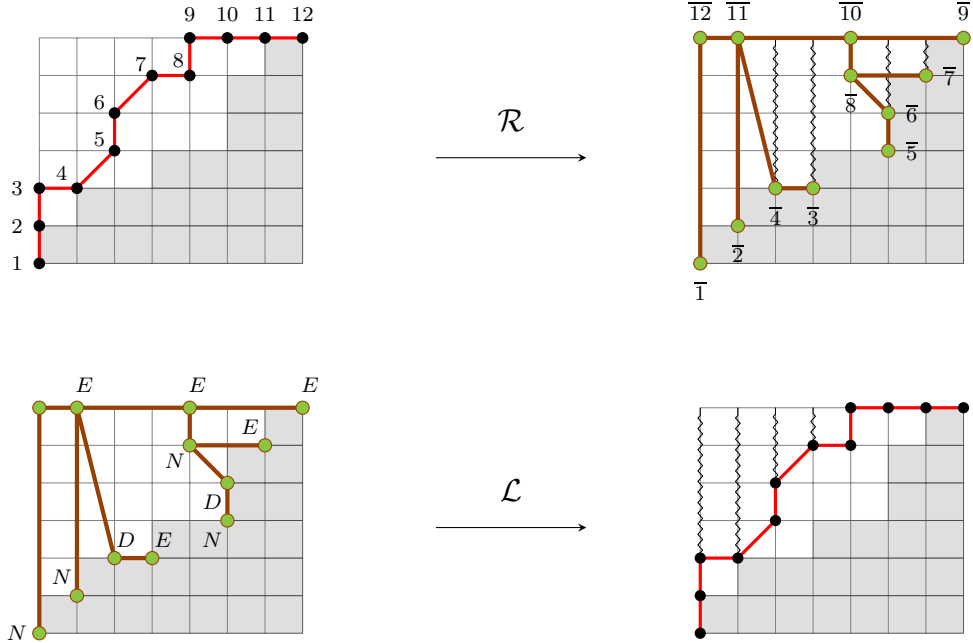

Finally, we check that the right and left flushing maps $\mathcal{R}$ and $\mathcal{L}$ are inverses. 
Any $\nu$-Schr\"oder path $\mu$ is uniquely determined by its lattice points. The $x$-coordinate of a point $p$ in $\mu$ is determined by the number of $E$ and $D$ steps before $p$, which is precisely the number of forbidden $x$-coordinates before $\overline{p}$ in $\mathcal{R}(\mu)$. Therefore the $x$-coordinate of $\mathcal{L}(\overline{p})$ is the same as that of $p$, and since $\mathcal{R}$ and $\mathcal{L}$ do not alter the $y$-coordinates, we have $\mathcal{L}(\mathcal{R}(p))=p$. Note that $\mathcal{R}$ is injective, as two different $\nu$-Schr\"oder paths have at least one row with a different number of lattice points, and so the corresponding $\nu$-Schr\"oder trees differ on that row. 

The next theorem now follows.

\begin{theorem}\label{treePathBijection}
The map $\varphi:\calT_\nu\rightarrow \calP_\nu$ is a bijection between the set of $\nu$-Schr\"oder trees and the set of $\nu$-Schr\"oder paths. \qed
\end{theorem}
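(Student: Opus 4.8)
The map $\varphi$ is the left-flushing map $\mathcal{L}$ --- equivalently, the map sending a $\nu$-Schr\"oder tree $T$ to the lattice path obtained by reading the $N/E/D$ labels of $T$ in post-order --- so the plan is to prove Theorem~\ref{treePathBijection} by showing that $\mathcal{L}\colon\calT_\nu\to\calP_\nu$ and the right-flushing map $\mathcal{R}\colon\calP_\nu\to\calT_\nu$ are mutually inverse. Much of the work is already in hand: we have checked that $\mathcal{R}(\mu)$ is always a bona fide $\nu$-Schr\"oder tree (each row and each column receives a node, and the node forced into the first column is the root) and that $\mathcal{L}(T)$ is always a lattice path weakly above $\nu$, the latter via the identity $\mathrm{hroot}_\nu(\overline{p})=\mathrm{horiz}_\nu(p)\geq 0$. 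So only the two composition identities remain to be established.

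I would first verify $\mathcal{L}\circ\mathcal{R}=\mathrm{id}_{\calP_\nu}$. A $\nu$-Schr\"oder path is determined by the set of its lattice points, and neither $\mathcal{R}$ nor $\mathcal{L}$ changes any $y$-coordinate, so it suffices to track the $x$-coordinate of each point $p$ of a path $\mu$. That $x$-coordinate equals the number of $E$ and $D$ steps of $\mu$ strictly before $p$; by the definition of forbidden $x$-coordinates this is exactly the number of forbidden columns that $\mathcal{R}$ encounters before placing the node $\overline{p}$, hence exactly the column into which $\mathcal{L}$ subsequently left-flushes $\overline{p}$. Thus $\mathcal{L}(\mathcal{R}(\mu))=\mu$.

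For $\mathcal{R}\circ\mathcal{L}=\mathrm{id}_{\calT_\nu}$ I would run the symmetric bookkeeping argument: a $\nu$-Schr\"oder tree $T$ is determined by the positions of its nodes, both maps fix $y$-coordinates, and the column of a node $\overline{p}$ of $T$ is controlled by the number of $E$- and $D$-labelled nodes on the path from $\overline{p}$ to the root, which is precisely the number of forbidden columns preceding $\overline{p}$ in its row, and hence precisely the datum $\mathcal{R}$ consults to recover $\overline{p}$ from $\mathcal{L}(T)$. (As a shortcut in one direction, $\mathcal{R}$ is injective, since two distinct $\nu$-Schr\"oder paths differ in the number of lattice points on some row and therefore have right-flushings differing on that row; together with $\mathcal{L}\circ\mathcal{R}=\mathrm{id}_{\calP_\nu}$ and surjectivity of $\mathcal{R}$ this already yields the bijection.) Either way $\mathcal{R}$ and $\mathcal{L}=\varphi$ are inverse bijections, which is the assertion of the theorem. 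I expect the only genuine difficulty to be keeping the bookkeeping honest --- isolating the single invariant (the number of $E$/$D$ steps of the path before a point, equal to the number of $E$/$D$-labelled ancestors of the corresponding node, equal to the number of forbidden columns before it) and checking that both flushing procedures preserve it and consult exactly it when deciding where to place a point; once this is pinned down, both inverse identities follow immediately.
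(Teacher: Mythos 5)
Your proposal follows the paper's own route: Theorem~\ref{treePathBijection} is established in Section~\ref{subsec.bijection} precisely by introducing the right- and left-flushing maps $\mathcal{R}$ and $\mathcal{L}$, checking well-definedness via $\mathrm{horiz}_\nu$ and $\mathrm{hroot}_\nu$, and verifying the inverse relation through the same invariant you isolate (the $x$-coordinate of a path point equals the number of $E$ and $D$ steps before it, which equals the number of forbidden columns at the moment the corresponding node is flushed). One small caution: your parenthetical ``shortcut'' is circular, since surjectivity of $\mathcal{R}$ is exactly what remains to be proven at that point; but your main route, the symmetric bookkeeping giving $\mathcal{R}\circ\mathcal{L}=\mathrm{id}_{\calT_\nu}$, is sound and in fact makes explicit a direction the paper treats only implicitly.
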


\subsection{The posets of $\nu$-Schr\"oder trees and paths.}

The set of $\nu$-Schr\"oder trees satisfy a partial order induced by the covering relation $T \prec T'$ if and only if $T'$ is a contraction of $T$. 
We call this the \textit{poset of $\nu$-Schr\"oder trees}. 

To define a poset on $\nu$-Schr\"oder paths, we translate contractions of $\nu$-Schr\"oder trees to $\nu$-Schr\"oder paths. 
The right, left and diagonal contractions are considered separately, as they correspond to different contraction moves on $\nu$-Schr\"oder paths. 

Let $T$ be a $\nu$-Schr\"oder tree.
First we consider a right contraction of $T$ at a node $\overline{q}$ with parent node $\overline{p}$ above $\overline{q}$ and with a child node $\overline{r}$ to the right of $\overline{q}$. 
The labels of the nodes $\overline{q}$ and $\overline{r}$ are $N$ and $E$ respectively.  
Contracting at $\overline{q}$ removes the node $\overline{q}$ and the label on the node $\overline{r}$ becomes $D$. 
This corresponds to replacing an $E$ step and a $N$ step in $\varphi(T)$ with a $D$ step. 
In the counterclockwise post-order traversal of $T$, the $E$ and $N$ steps are consecutive, and so correspond to a valley in $\varphi(T)$. 
Thus a right contraction in $T$ corresponds to replacing a valley in $\varphi(T)$ with a $D$ step.   

Next, consider a left contraction in $T$ at a node $\overline{q}$ with parent node $\overline{p}$ to the left of $\overline{q}$ and with a child node $\overline{r}$ below $\overline{q}$. 
As in the case above, contracting at $\overline{q}$ replaces an $E$ step and $N$ step with a $D$ step at $\overline{r}$. However, this time the $N$ and $E$ steps are not necessarily consecutive in $\varphi(T)$, as $\overline{q}$ may have other children which are read before $\overline{q}$ in the post-order traversal of $T$. The node $\overline{r}$ is the previous node in the post order traversal of $T$ such that $\mathrm{hroot}_\nu(\overline{r}) = \mathrm{hroot}_\nu(\overline{q})$. 
Recall from Section~\ref{subsec.bijection} that $\mathrm{hroot}_\nu(\overline{x})=\mathrm{horiz}_\nu(x)$. Therefore, $r$ is the previous lattice point on $\varphi(T)$ such that $r$ is the initial point of an $N$ step and $\mathrm{horiz}_\nu(r) = \mathrm{horiz}_\nu(q)$. Left contraction deletes this pair of $E$ and $N$ steps, and places a $D$ step at $r$. See Figure~\ref{LRpathContraction} for an example. 

Lastly, consider a diagonal contraction in $T$ at a node $\overline{r}$ with parent node $\overline{p}$. 
Note that $\overline{r}$ must have a left child $\overline{s}$ and a right child $\overline{t}$, as otherwise contracting at $\overline{r}$ would not yield a $\nu$-Schr\"oder tree (either the row or column of $\overline{r}$ would not contain a node). 
The labels of the nodes $\overline{r}$, $\overline{s}$, and $\overline{t}$ are $D$, $N$, and $E$ respectively. 
Contracting at $\overline{r}$ changes the labels of both $\overline{s}$ and $\overline{t}$ to $D$. 
In the post-order traversal of the tree, this contraction corresponds to replacing the label $N$ at $\overline{s}$ with $D$, replacing the label $E$ at $\overline{t}$ with $D$, and removing the point $\overline{r}$ labeled $D$. Note that $\overline{s}$ is the first point before $\overline{t}$ in the post-order traversal satisfying $\mathrm{hroot}_\nu(\overline{s}) = \mathrm{hroot}_\nu(\overline{r})=\mathrm{hroot}_\nu(\overline{t}) -1$. 
Therefore, $s$ is the previous point on $\varphi(T)$ such that $\mathrm{horiz}_\nu(s) = \mathrm{horiz}_\nu(r) = \mathrm{horiz}_\nu(t)-1$. 
Diagonal contraction thus deletes the step $E$ with end point $r$ and the step $N$ with initial point $s$, and places a $D$ step at $s$. See Figure~\ref{diagContraction2} for an example.

\begin{figure}[ht!]
\begin{tikzpicture}[scale=1.1]
\begin{scope}[xshift = -10, yshift = 0, scale=0.5]
\draw[help lines] (0,0) grid (5,3);
    \draw[fill=gray, opacity=0.25] (0,0) rectangle (5,1); 
    \draw[fill=gray, opacity=0.25] (1,1) rectangle (5,2);
    \draw[fill=gray, opacity=0.25] (3,2) rectangle (5,3);

    \draw[ultra thick, RawSienna] (0,0) -- (0,1) -- (1,1);
    \draw[ultra thick, RawSienna] (0,1) -- (0,3) -- (5,3);
    \draw[ultra thick, RawSienna] (2,3) -- (2,2) -- (3,2);
    
	\vertex[draw=RawSienna, fill=LimeGreen] at (0,3) {};		
	\vertex[draw=RawSienna, fill=LimeGreen] at (0,0) {};
	\vertex[draw=RawSienna, fill=LimeGreen] at (0,1) {};	
	\vertex[draw=RawSienna, fill=LimeGreen] at (1,1) {};
	\vertex[draw=RawSienna, fill=LimeGreen] at (2,3) {};
	\vertex[draw=RawSienna, fill=LimeGreen] at (2,2) {};
	\vertex[draw=RawSienna, fill=LimeGreen] at (4,3) {};
	\vertex[draw=RawSienna, fill=LimeGreen] at (5,3) {};	
	\vertex[draw=RawSienna, fill=LimeGreen] at (3,2) {};	
	
	\node[] at (-0.6,0) {\tiny{$N$}};
	\node[] at (-0.6,1) {\tiny{$N$}};
	\node[] at (1.3,0.4) {\tiny{$E$}};	
	\node[] at (2,3.5) {\tiny{$E$}};
	\node[] at (4,3.5) {\tiny{$E$}};
	\node[] at (5,3.5) {\tiny{$E$}};
	\node[] at (2.4,1.5) {\tiny{$N$}};
	\node[] at (3.4,1.5) {\tiny{$E$}};	
\end{scope}
\begin{scope}[xshift=150, yshift=0, scale=0.5]
	\draw[help lines] (0,0) grid (5,3);
    \draw[fill=gray, opacity=0.25] (0,0) rectangle (5,1); 
    \draw[fill=gray, opacity=0.25] (1,1) rectangle (5,2);
    \draw[fill=gray, opacity=0.25] (3,2) rectangle (5,3);

    \draw[ultra thick, RawSienna] (0,3) -- (1,1);
    \draw[ultra thick, RawSienna] (0,0) -- (0,3) -- (5,3);
    \draw[ultra thick, RawSienna] (2,3) -- (2,2) -- (3,2);
    
	\vertex[draw=RawSienna, fill=LimeGreen] at (0,3) {};		
	\vertex[draw=RawSienna, fill=LimeGreen] at (0,0) {};
	\vertex[draw=RawSienna, fill=LimeGreen] at (1,1) {};
	\vertex[draw=RawSienna, fill=LimeGreen] at (2,3) {};
	\vertex[draw=RawSienna, fill=LimeGreen] at (2,2) {};
	\vertex[draw=RawSienna, fill=LimeGreen] at (4,3) {};
	\vertex[draw=RawSienna, fill=LimeGreen] at (5,3) {};	
	\vertex[draw=RawSienna, fill=LimeGreen] at (3,2) {};	
	
	\node[] at (-0.6,0) {\tiny{$N$}};
	\node[] at (1.3,0.4) {\tiny{$D$}};	
	\node[] at (2,3.5) {\tiny{$E$}};
	\node[] at (4,3.5) {\tiny{$E$}};
	\node[] at (5,3.5) {\tiny{$E$}};
	\node[] at (2.4,1.5) {\tiny{$N$}};
	\node[] at (3.4,1.5) {\tiny{$E$}};		
\end{scope}
\begin{scope}[xshift=300, yshift=0, scale=0.5]
	\draw[help lines] (0,0) grid (5,3);
    \draw[fill=gray, opacity=0.25] (0,0) rectangle (5,1); 
    \draw[fill=gray, opacity=0.25] (1,1) rectangle (5,2);
    \draw[fill=gray, opacity=0.25] (3,2) rectangle (5,3);

    \draw[ultra thick, RawSienna] (1,3) -- (1,1);
    \draw[ultra thick, RawSienna] (0,0) -- (0,3) -- (5,3);
    \draw[ultra thick, RawSienna] (2,3) -- (2,2) -- (3,2);
    
	\vertex[draw=RawSienna, fill=LimeGreen] at (0,3) {};		
	\vertex[draw=RawSienna, fill=LimeGreen] at (0,0) {};
	\vertex[draw=RawSienna, fill=LimeGreen] at (1,3) {};	
	\vertex[draw=RawSienna, fill=LimeGreen] at (1,1) {};
	\vertex[draw=RawSienna, fill=LimeGreen] at (2,3) {};
	\vertex[draw=RawSienna, fill=LimeGreen] at (2,2) {};
	\vertex[draw=RawSienna, fill=LimeGreen] at (4,3) {};
	\vertex[draw=RawSienna, fill=LimeGreen] at (5,3) {};	
	\vertex[draw=RawSienna, fill=LimeGreen] at (3,2) {};	
	
	\node[] at (-0.6,0) {\tiny{$N$}};
	\node[] at (1.3,0.4) {\tiny{$N$}};
	\node[] at (1,3.5) {\tiny{$E$}};
	\node[] at (2,3.5) {\tiny{$E$}};
	\node[] at (4,3.5) {\tiny{$E$}};
	\node[] at (5,3.5) {\tiny{$E$}};
	\node[] at (2.4,1.5) {\tiny{$N$}};
	\node[] at (3.4,1.5) {\tiny{$E$}};		
\end{scope}
\begin{scope}[xshift=75, yshift=20, scale=0.6]
	\draw[-stealth] (0,0) to (3,0);
	\node[] (1) at (1.5,0.5) {right};
	\node[] (2) at (1.5,-0.5) {contraction};	
\end{scope}
\begin{scope}[xshift=230, yshift=20, scale=0.6]
	\draw[stealth-] (0,0) to (3,0);
	\node[] (1) at (1.5,0.5) {left};
	\node[] (2) at (1.5,-0.5) {contraction};
\end{scope}
\begin{scope}[xshift = -10, yshift=-110, scale=0.5]
\draw[help lines] (0,0) grid (5,3);
    \draw[fill=gray, opacity=0.25] (0,0) rectangle (5,1); 
    \draw[fill=gray, opacity=0.25] (1,1) rectangle (5,2);
    \draw[fill=gray, opacity=0.25] (3,2) rectangle (5,3);

    \draw[very thick, red] (0,0) -- (0,1) -- (1,1) -- (1,2) -- (2,2) -- (2,3) -- (5,3);
\end{scope}
\begin{scope}[xshift=150, yshift=-110, scale=0.5]
\draw[help lines] (0,0) grid (5,3);
    \draw[fill=gray, opacity=0.25] (0,0) rectangle (5,1); 
    \draw[fill=gray, opacity=0.25] (1,1) rectangle (5,2);
    \draw[fill=gray, opacity=0.25] (3,2) rectangle (5,3);

    \draw[very thick, red] (0,0) -- (0,1) -- (1,2) -- (2,2) -- (2,3) -- (5,3);
\end{scope}
\begin{scope}[xshift=300, yshift=-110, scale=0.5]
\draw[help lines] (0,0) grid (5,3);
    \draw[fill=gray, opacity=0.25] (0,0) rectangle (5,1); 
    \draw[fill=gray, opacity=0.25] (1,1) rectangle (5,2);
    \draw[fill=gray, opacity=0.25] (3,2) rectangle (5,3);

    \draw[very thick, red] (0,0) -- (0,2) -- (1,2) -- (1,3) -- (5,3);
    
	\node[circle, draw = black, fill=black, inner sep=1pt, label=left:{\scriptsize$r$}] at (0,1) {};
	\node[circle, draw = black, fill=black, inner sep=1pt, label=above:{\scriptsize$q$}] at (4,3) {};
\end{scope}
\begin{scope}[xshift=75, yshift=-90, scale=0.6]
	\draw[-stealth] (0,0) to (3,0);
	\node[] (1) at (1.5,0.5) {right};
	\node[] (2) at (1.5,-0.5) {contraction};	
\end{scope}
\begin{scope}[xshift=230, yshift=-90, scale=0.6]
	\draw[stealth-] (0,0) to (3,0);
	\node[] (1) at (1.5,0.5) {left};
	\node[] (2) at (1.5,-0.5) {contraction};
\end{scope}
\begin{scope}[xshift=23, yshift=-20, scale=0.6]
	\draw[-stealth] (0,0) to (0,-2);
	\node[] (1) at (0.5,-1) {$\varphi$};
\end{scope}
\begin{scope}[xshift=183, yshift=-20, scale=0.6]
	\draw[-stealth] (0,0) to (0,-2);
	\node[] (1) at (0.5,-1) {$\varphi$};
\end{scope}
\begin{scope}[xshift=333, yshift=-20, scale=0.6]
	\draw[-stealth] (0,0) to (0,-2);
	\node[] (1) at (0.5,-1) {$\varphi$};
\end{scope}
\end{tikzpicture}
\caption{A right and left contraction of a pair of $(3,5)$-Schr\"oder trees, and the corresponding contractions in the associated $(3,5)$-Schr\"oder paths.}
\label{LRpathContraction}
\end{figure}
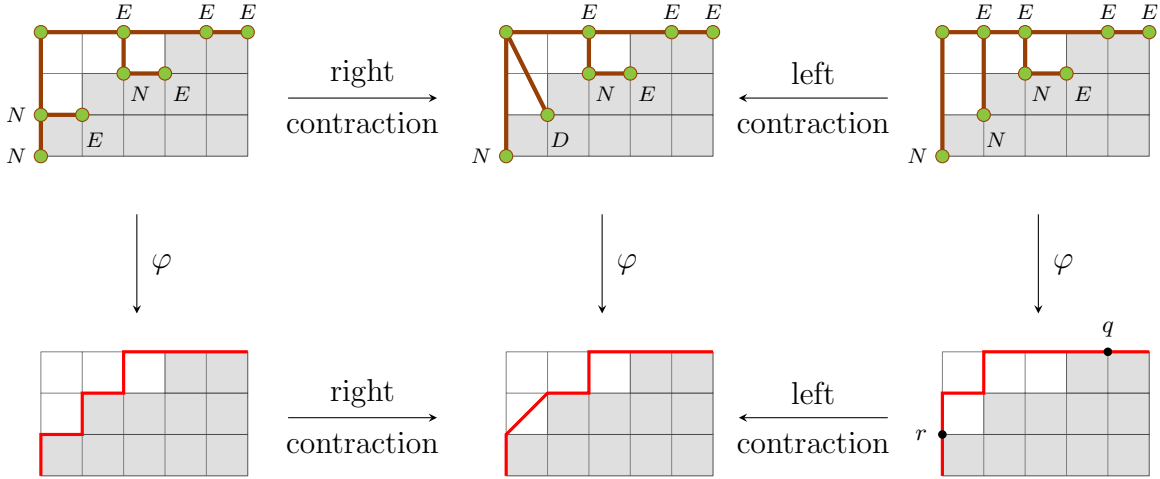

\begin{figure}[ht!]
\begin{tikzpicture}[scale=1.1]
\begin{scope}[scale=0.5]
    \draw[help lines] (0,0) grid (5,3);
    \draw[fill=gray, opacity=0.25] (0,0) rectangle (5,1); 
    \draw[fill=gray, opacity=0.25] (1,1) rectangle (5,2);
    \draw[fill=gray, opacity=0.25] (3,2) rectangle (5,3);

    \draw[ultra thick, RawSienna] (0,0) -- (0,3) -- (5,3);   
    \draw[ultra thick, RawSienna] (0,3) -- (1,2) -- (1,1);   
    \draw[ultra thick, RawSienna] (1,2) -- (3,2);
    
	\vertex[draw=RawSienna, fill=LimeGreen] at (0,3) {};		
	\vertex[draw=RawSienna, fill=LimeGreen] at (0,0) {};
    \vertex[draw=RawSienna, fill=LimeGreen] at (1,1) {};
	\vertex[draw=RawSienna, fill=LimeGreen] at (1,2) {};
    \vertex[draw=RawSienna, fill=LimeGreen] at (2,2) {};
	\vertex[draw=RawSienna, fill=LimeGreen] at (4,3) {};
	\vertex[draw=RawSienna, fill=LimeGreen] at (5,3) {};	
	\vertex[draw=RawSienna, fill=LimeGreen] at (3,2) {};
	
	\node[] at (-0.6,0) {\tiny{$N$}};
	\node[] at (4,3.5) {\tiny{$E$}};
	\node[] at (5,3.5) {\tiny{$E$}};
	\node[] at (2.4,1.5) {\tiny{$E$}};
	\node[] at (3.4,1.5) {\tiny{$E$}};	
	\node[] at (1.4,0.5) {\tiny{$N$}};	
	\node[] at (1.4,2.5) {\tiny{$D$}};	    
\end{scope}

\begin{scope}[xshift=200, scale=0.5]
    \draw[help lines] (0,0) grid (5,3);
    \draw[fill=gray, opacity=0.25] (0,0) rectangle (5,1); 
    \draw[fill=gray, opacity=0.25] (1,1) rectangle (5,2);
    \draw[fill=gray, opacity=0.25] (3,2) rectangle (5,3);

    \draw[ultra thick, RawSienna] (0,0) -- (0,3) -- (5,3);   
    \draw[ultra thick, RawSienna] (0,3) -- (2,2) -- (3,2);   
    \draw[ultra thick, RawSienna] (0,3) -- (1,1);
    
	\vertex[draw=RawSienna, fill=LimeGreen] at (0,3) {};		
	\vertex[draw=RawSienna, fill=LimeGreen] at (0,0) {};
    \vertex[draw=RawSienna, fill=LimeGreen] at (1,1) {};
    \vertex[draw=RawSienna, fill=LimeGreen] at (2,2) {};
	\vertex[draw=RawSienna, fill=LimeGreen] at (4,3) {};
	\vertex[draw=RawSienna, fill=LimeGreen] at (5,3) {};	
	\vertex[draw=RawSienna, fill=LimeGreen] at (3,2) {};
	
	\node[] at (-0.6,0) {\tiny{$N$}};
	\node[] at (4,3.5) {\tiny{$E$}};
	\node[] at (5,3.5) {\tiny{$E$}};
	\node[] at (2.4,1.5) {\tiny{$D$}};
	\node[] at (3.4,1.5) {\tiny{$E$}};	
	\node[] at (1.4,0.5) {\tiny{$D$}};	
\end{scope}

\begin{scope}[xshift=0, yshift=-110, scale=0.5]
    \draw[help lines] (0,0) grid (5,3);
    \draw[fill=gray, opacity=0.25] (0,0) rectangle (5,1); 
    \draw[fill=gray, opacity=0.25] (1,1) rectangle (5,2);
    \draw[fill=gray, opacity=0.25] (3,2) rectangle (5,3);

    \draw[very thick, red] (0,0) -- (0,2) -- (2,2) -- (3,3) -- (5,3);
    
	\node[circle, draw = black, fill=black, inner sep=1pt, label=left:{\scriptsize$s$}] at (0,1) {};
	\node[circle, draw = black, fill=black, inner sep=1pt, label=above:{\scriptsize$r$}] at (2,2) {};
\end{scope}

\begin{scope}[xshift=200, yshift=-110, scale=0.5]
    \draw[help lines] (0,0) grid (5,3);
    \draw[fill=gray, opacity=0.25] (0,0) rectangle (5,1); 
    \draw[fill=gray, opacity=0.25] (1,1) rectangle (5,2);
    \draw[fill=gray, opacity=0.25] (3,2) rectangle (5,3);

    \draw[very thick, red] (0,0) -- (0,1) -- (1,2) -- (2,2) -- (3,3) -- (5,3);
\end{scope}

\begin{scope}[xshift=95,yshift=20, scale=0.5]
    \draw[stealth-] (6,0) to[bend right=0] (0,0);
	\node[] (1) at (3,0.7) {diagonal};
    \node[] (1) at (3,-0.5) {contraction};
\end{scope}
\begin{scope}[xshift=95,yshift=-90, scale=0.5]
    \draw[stealth-] (6,0) to[bend right=0] (0,0);
	\node[] (1) at (3,0.7) {diagonal};
    \node[] (1) at (3,-0.5) {contraction};
\end{scope}
\begin{scope}[xshift=33, yshift=-15, scale=0.6]
	\draw[-stealth] (0,0) to (0,-2);
	\node[] (1) at (0.5,-1) {$\varphi$};
\end{scope}
\begin{scope}[xshift=234, yshift=-15, scale=0.6]
	\draw[-stealth] (0,0) to (0,-2);
	\node[] (1) at (0.5,-1) {$\varphi$};
\end{scope}
\end{tikzpicture}
\caption{A diagonal contraction of a $(3,5)$-Schr\"oder tree and the corresponding diagonal contraction in the associated $(3,5)$-Schr\"oder path.}
\label{diagContraction2}
\end{figure}
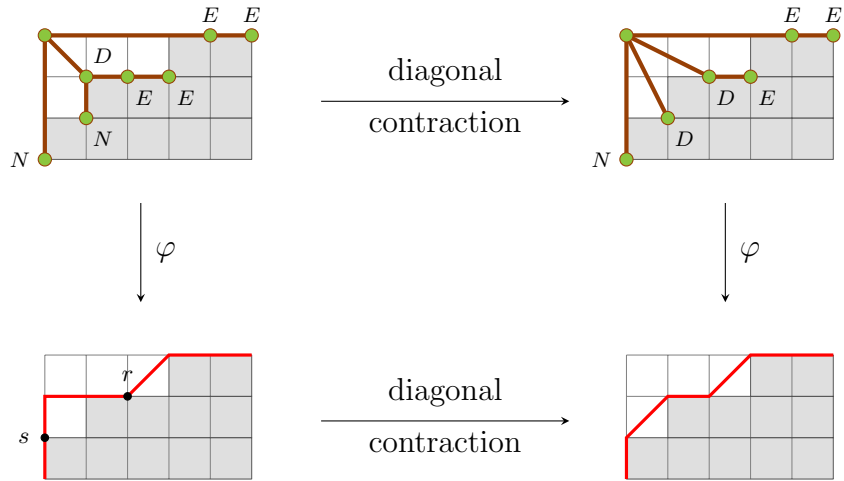

The set of $\nu$-Schr\"oder paths then form a poset with the cover relation inherited from the poset of $\nu$-Schr\"oder trees. 

\begin{definition}
The (contraction) poset $P_\nu$ of $\nu$-Schr\"oder paths is the set of $\nu$-Schr\"oder paths with cover relation $\mu \prec \lambda$ if and only if $\lambda$ is formed from $\mu$ by a contraction. The contraction moves are the following:
\begin{itemize}
    \item[1.] \textbf{Right Contraction:} Replace a consecutive $EN$ pair with $D$. 
    \item[2.] \textbf{Left Contraction:} Delete an $E$ step with initial point $q$, along with the preceding $N$ step with initial point $r$ satisfying $\mathrm{horiz}_\nu(r) = \mathrm{horiz}_\nu(q)$. Shift the subpath between the deleted steps one unit to the right, and place a $D$ step at $r$. 
    \item[3.] \textbf{Diagonal Contraction:} Delete an $E$ step ending at a point $r$, which is the initial point of a $D$ step, along with the preceding $N$ step with initial point $s$ satisfying $\mathrm{horiz}_\nu(s) = \mathrm{horiz}_\nu(r)$. Shift the subpath between the deleted steps one unit to the right, and place a $D$ step at $s$.
\end{itemize}
\end{definition}

See Figure~\ref{fig:53poset} for an example of the poset of $\nu$-Schr\"oder paths for the rational $\nu=\nu(3,5)$.

By the bijection in Theorem~\ref{treePathBijection} and the translation between contractions of $\nu$-Schr\"oder trees and contractions of $\nu$-Schr\"oder paths above, the next theorem now follows. 

\begin{theorem} \label{treePosetIsPathPoset}
The poset of $\nu$-Schr\"oder trees is isomorphic to the poset of $\nu$-Schr\"oder paths. 
\qed
\end{theorem}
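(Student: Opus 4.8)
The plan is to promote the set bijection $\varphi\colon\calT_\nu\to\calP_\nu$ of Theorem~\ref{treePathBijection} to an isomorphism of posets. Since both sides are finite posets whose order is the transitive closure of the cover relation, it suffices to verify that $\varphi$ matches Hasse diagram edges in both directions: $T\lessdot T'$ in the poset of $\nu$-Schr\"oder trees if and only if $\varphi(T)\lessdot\varphi(T')$ in $P_\nu$. A bijection with this property automatically matches saturated chains, hence the full order.

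For the forward implication I would carry out the case analysis already assembled before the definition of $P_\nu$. Reading $\varphi$ as the post-order traversal of the labelled tree, a \emph{right} contraction of $T$ at a node $\overline q$ (labelled $N$, parent above, right child $\overline r$ labelled $E$) makes the $N$ and $E$ steps a consecutive valley of $\varphi(T)$, and the contraction turns this valley into a $D$ step: this is exactly a right contraction move on $\varphi(T)$. For a \emph{left} contraction at $\overline q$, the identity $\mathrm{hroot}_\nu(\overline x)=\mathrm{horiz}_\nu(x)$ identifies $\overline r$ with the previous lattice point of $\varphi(T)$ that starts an $N$ step with $\mathrm{horiz}_\nu(r)=\mathrm{horiz}_\nu(q)$, and the contraction deletes the corresponding $E$ and $N$ steps and inserts a $D$ at $r$ — a left contraction move. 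A \emph{diagonal} contraction at a node $\overline r$ (which necessarily has a left child $\overline s$ and a right child $\overline t$) deletes the $E$ step ending at $r$ and the preceding $N$ step at $s$ with $\mathrm{horiz}_\nu(s)=\mathrm{horiz}_\nu(r)=\mathrm{horiz}_\nu(t)-1$, inserting a $D$ at $s$ — a diagonal contraction move. In every case $\varphi(T')$ is obtained from $\varphi(T)$ by precisely one of the three listed moves, so $\varphi(T)\lessdot\varphi(T')$.

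For the converse I would show that each contraction move applicable to $\mu=\varphi(T)$ is realized by a unique contraction of $T$. Given such a move with output $\lambda$, put $T'=\varphi^{-1}(\lambda)$ and reverse the local picture under left-flushing: a consecutive $EN$ valley of $\mu$ replaced by $D$ comes from a node $\overline q$ labelled $N$ with parent directly above and right child $\overline r$ labelled $E$, whose removal is the right contraction producing $T'$; the left and diagonal moves are handled analogously, the step $r$ (respectively $s$) of the move singling out the child node of $T$ that receives the new $D$ label and the deleted $E$ step singling out the removed node, with $\mathrm{hroot}_\nu=\mathrm{horiz}_\nu$ guaranteeing that the node so produced is the first one before $\overline t$ (respectively $\overline q$) at the required $\mathrm{hroot}_\nu$ value. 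One then checks that the three move types are mutually exclusive and account for all Hasse edges below $T$, so this assignment is a bijection between contractions of $T$ and contraction moves on $\varphi(T)$.

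I expect the converse to be the main obstacle. The left and diagonal moves are non-local — they shift a whole subpath one unit — so the delicate point is confirming that the node of $T$ recovered from the positions $r$, $s$ and the matching $\mathrm{horiz}_\nu$-values is always actually present in $T$, and that deleting exactly that node returns $\varphi^{-1}(\lambda)$ rather than some other tree; in other words, that the translation dictionary set up before the definition of $P_\nu$ is injective and surjective onto the path-side moves, not merely that it carries tree contractions to path moves. Once this bookkeeping is settled, it combines with Theorem~\ref{treePathBijection} to give that $\varphi$ and $\varphi^{-1}$ both preserve covers, hence $\varphi$ is the desired poset isomorphism.
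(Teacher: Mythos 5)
Your proposal is correct and follows essentially the same route as the paper: the paper proves this theorem by combining the bijection $\varphi$ of Theorem~\ref{treePathBijection} with exactly the case-by-case translation of right, left, and diagonal contractions into the three path moves, which is how the cover relation on $\calP_\nu$ is set up in the first place. Your extra attention to the converse direction (that every path move arises from a unique tree contraction) is a reasonable elaboration of a point the paper treats as implicit in that translation, but it is not a different method.
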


\section{The face poset of the $\nu$-associahedron}
\label{sec.nuAssociahedron}

The $\nu$-associahedron $A_\nu$ is a polyhedral complex which generalizes the classical associahedron. 
It was introduced by Ceballos, Padrol and Sarmiento~\cite{CPS19}, and they gave a geometric realization of $A_\nu$ via tropical hyperplane arrangements. 
$A_\nu$ also has a combinatorial definition~\cite[Theorem 5.2]{CPS19} as a polyhedral complex whose face poset is determined by objects known as covering $(I,\overline{J})$-forests. 

In this section, we show that the face poset of the $\nu$-associahedron has alternative descriptions as a poset on $\nu$-Schr\"oder trees and as a poset of $\nu$-Schr\"oder paths by showing that these posets are isomorphic to the poset of covering $(I,\overline{J})$-forests.
We begin by recalling the definition of the covering $(I,\overline{J})$-forests of \cite{CPS19}, and for our purposes it suffices to restrict the definition slightly to set partitions of $[n]$.
\begin{definition}
Let $I\sqcup \overline{J}$ be a partition of $[n]$ such that $1 \in I$ and $n \in \overline{J}$. 
An {\em $(I,\overline{J})$-forest} is a subgraph of the complete bipartite graph $K_{|I|,|\overline{J}|}$ that is 
\begin{itemize}
    \item[1.] \textbf{Increasing:} each arc $(i,\overline{j})$ fulfills $i < \overline{j}$; and 
    \item[2.] \textbf{Non-crossing:} it does not contain two arcs $(i,\overline{j})$ and $(i',\overline{j}')$ satisfying $i < i' < j < \overline{j}'$. 
\end{itemize}
An {\em $(I,\overline{J})$-tree} is a maximal $(I,\overline{J})$-forest. 
A {\em covering $(I,\overline{J})$-forest} is an $(I,\overline{J})$-forest with the arc $(1, n)$ and no isolated nodes.
\end{definition}

To a set of covering $(I,\overline{J})$-forests we can associate a unique path $\nu$ as follows. 
Assign the label $E_{i-1}$ to the $i$-th element in $I$, and assign the label $N_{i-1}$ to the $i$-th element in $\overline{J}$. 
Reading the labels of the nodes $k=2,\ldots,n-1$ in increasing order yields a lattice path $\nu$ from $(0,0)$ to $(|I|-1,|\overline{J}|-1)$. 
See Figure~\ref{IJTreeAndNuSchroderTree} for an illustration.

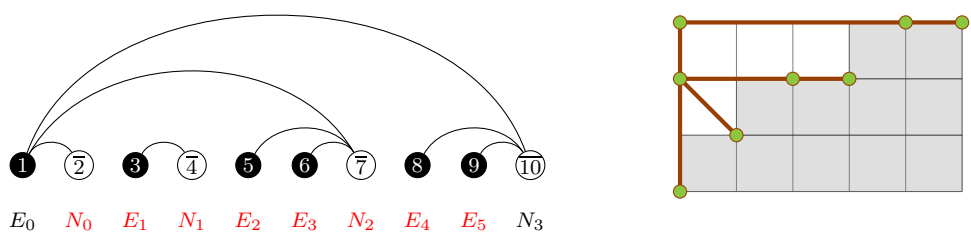
\begin{figure}[ht!]
\begin{tikzpicture}
\begin{scope}[xshift=0, yshift=0, scale=0.75]
	\node[style={circle,draw, inner sep=1pt, fill=black}] (1)  at (1,0)  {{\tiny \textcolor{white}{1}}};
	\node[style={circle,draw, inner sep=1pt, fill=black}] (3)  at (3,0)  {{\tiny \textcolor{white}{3}}};
	\node[style={circle,draw, inner sep=1pt, fill=black}] (5)  at (5,0)  {{\tiny \textcolor{white}{5}}};
	\node[style={circle,draw, inner sep=1pt, fill=black}] (6)  at (6,0)  {{\tiny \textcolor{white}{6}}};
	\node[style={circle,draw, inner sep=1pt, fill=black}] (8)  at (8,0)  {{\tiny \textcolor{white}{8}}};
	\node[style={circle,draw, inner sep=1pt, fill=black}] (9)  at (9,0)  {{\tiny \textcolor{white}{9}}};
	
	\node[style={circle,draw, inner sep=1pt, fill=none}] (2)  at (2,0)  {\tiny{$\overline{2}$}};
	\node[style={circle,draw, inner sep=1pt, fill=none}] (4)  at (4,0)  {{\tiny $\overline{4}$}};
	\node[style={circle,draw, inner sep=1pt, fill=none}] (7)  at (7,0)  {{\tiny $\overline{7}$}};
	\node[style={circle,draw, inner sep=0pt, fill=none}] (10)  at (10,0)  {{\tiny $\overline{10}$}};	
	
	\node[] (E0)  at (1,-1)  {\tiny $E_0$};
	\node[] (N0)  at (2,-1)  {\tiny \textcolor{red}{$N_0$}};
	\node[] (E1)  at (3,-1)  {\tiny \textcolor{red}{$E_1$}};
	\node[] (N1)  at (4,-1)  {\tiny \textcolor{red}{$N_1$}};	
	\node[] (E2)  at (5,-1)  {\tiny \textcolor{red}{$E_2$}};
	\node[] (E3)  at (6,-1)  {\tiny \textcolor{red}{$E_3$}};	
	\node[] (N2)  at (7,-1)  {\tiny \textcolor{red}{$N_2$}};	
	\node[] (E4)  at (8,-1)  {\tiny \textcolor{red}{$E_4$}};
	\node[] (E5)  at (9,-1)  {\tiny \textcolor{red}{$E_5$}};	
	\node[] (N3)  at (10,-1)  {\tiny $N_3$};	
	
	\draw[] (1) to [bend left=60] (2);
    \draw[] (1) to [bend left=60] (7);  
    \draw[] (1) to [bend left=70] (10);

    \draw[] (3) to [bend left=60] (4);

    \draw[] (5) to [bend left=60] (7);
    \draw[] (6) to [bend left=60] (7);
    
    \draw[] (8) to [bend left=60] (10);
    \draw[] (9) to [bend left=60] (10);
\end{scope}
\begin{scope}[xshift=270, yshift=-10, scale=0.75]
    \draw[help lines] (0,0) grid (5,3);
    \draw[fill=gray, opacity=0.25] (0,0) rectangle (5,1); 
    \draw[fill=gray, opacity=0.25] (1,1) rectangle (5,2);
    \draw[fill=gray, opacity=0.25] (3,2) rectangle (5,3);

    \draw[ultra thick, RawSienna] (0,0) -- (0,3);
    \draw[ultra thick, RawSienna] (1,1) -- (0,2);    
    \draw[ultra thick, RawSienna] (0,3) -- (5,3);    
    \draw[ultra thick, RawSienna] (0,2) -- (3,2);
    
	\vertex[draw=RawSienna, fill=LimeGreen] at (0,3) {};
	\vertex[draw=RawSienna, fill=LimeGreen] at (0,2) {};
	\vertex[draw=RawSienna, fill=LimeGreen] at (0,0) {};
	\vertex[draw=RawSienna, fill=LimeGreen] at (1,1) {};
	\vertex[draw=RawSienna, fill=LimeGreen] at (2,2) {};
	\vertex[draw=RawSienna, fill=LimeGreen] at (4,3) {};
	\vertex[draw=RawSienna, fill=LimeGreen] at (5,3) {};	
	\vertex[draw=RawSienna, fill=LimeGreen] at (3,2) {};
\end{scope}
\end{tikzpicture}
\caption{On the left is a covering $(I,\overline{J}$)-forest $F$ for $I=\{1,3,5,6,8,9\}$ and $\overline{J} = \{\overline{2},\overline{4},\overline{7},\overline{10}\}$. 
The associated path $\nu$ is read from the red labels below the covering $(I,\overline{J})$-forest.
On the right is the $\nu$-Schr\"oder tree that corresponds to $F$ under the bijection of Theorem~\ref{thm.coveringforesttreebijection}. }
\label{IJTreeAndNuSchroderTree}
\end{figure}

\begin{theorem} \label{thm.coveringforesttreebijection}
Covering $(I,\overline{J})$-forests are in bijection with $\nu$-Schr\"oder trees. 
\end{theorem}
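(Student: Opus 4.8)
The plan is to set up an explicit dictionary between the \emph{arcs} of a covering $(I,\overline{J})$-forest and the \emph{lattice points} of a $\nu$-Schr\"oder tree, and then to check that each defining condition on one side translates verbatim into a condition on the other. Write $I=\{i_1<\cdots<i_{|I|}\}$ and $\overline{J}=\{\overline{j}_1<\cdots<\overline{j}_{|\overline{J}|}\}$, so $i_1=1$, $\overline{j}_{|\overline{J}|}=n$, $b=|I|-1$ and $a=|\overline{J}|-1$. I would send the (potential) arc $\{i_p,\overline{j}_q\}$ to the lattice point $(p-1,q-1)$ of the $(b+1)\times(a+1)$ grid, which is the intersection of the vertical grid line labelled $E_{p-1}$ and the horizontal grid line labelled $N_{q-1}$. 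The key preliminary observation is a pure count: since $\nu$ is read off from the labels $E_1,\dots,E_b,N_0,\dots,N_{a-1}$ in the order of the positions $2,\dots,n-1$, there are exactly $i_p-p$ elements of $\overline{J}$ below $i_p$, and hence the lowest height of $\nu$ above horizontal coordinate $p-1$ equals $i_p-p$. This dictionary underlies (and recovers) the bijection of Ceballos--Padrol--Sarmiento between $(I,\overline{J})$-trees and $\nu$-binary trees.

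Granting this count, three equivalences follow by inspection. (i) The arc $\{i_p,\overline{j}_q\}$ is increasing ($i_p<\overline{j}_q$) iff $q-1\ge i_p-p$, i.e.\ iff $(p-1,q-1)$ lies weakly above $\nu$, i.e.\ lies in $R_\nu$. (ii) If the points of two arcs are not in strict southwest--northeast position (same row or column, or strictly northwest--southeast), then the arcs share an endpoint or are nested, hence do not cross, while the points are automatically $\nu$-compatible. (iii) For a strict southwest--northeast pair, with $(p-1,q-1)$ southwest of $(p'-1,q'-1)$ (so $p<p'$, $q<q'$), the closed rectangle they span lies in $R_\nu$ iff its bottom edge does, and since the lower boundary of $R_\nu$ is weakly increasing this holds iff $q-1\ge i_{p'}-p'$; by the count this is the same as $i_{p'}<\overline{j}_q$, which, using $i_p<i_{p'}$ and $\overline{j}_q<\overline{j}_{q'}$, is exactly the condition that the two arcs cross. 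Combining (i)--(iii): a set of arcs is increasing and pairwise non-crossing (an $(I,\overline{J})$-forest) iff the corresponding set of lattice points lies in $R_\nu$ and is pairwise $\nu$-compatible, with maximal objects corresponding to maximal objects.

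It then remains to match the two ``covering'' conditions. The arc $\{1,n\}=\{i_1,\overline{j}_{|\overline{J}|}\}$ corresponds to the point $(0,a)$, the root, so requiring the arc $(1,n)$ is requiring the root to be a node. A node $i_p\in I$ is isolated iff no arc uses the vertical line $x=p-1$, i.e.\ iff column $p-1$ is empty, and a node $\overline{j}_q\in\overline{J}$ is isolated iff row $q-1$ is empty; so forbidding isolated nodes is exactly requiring every row and column to contain a node. Hence the dictionary restricts to a bijection between covering $(I,\overline{J})$-forests and $\nu$-Schr\"oder trees.

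The only genuinely non-formal ingredient is the bookkeeping behind the counting observation, together with the care needed to describe $R_\nu$ in the columns where $\nu$ makes several consecutive north steps (so $R_\nu$ meets such a column in a vertical half-line); once the height function of $\nu$ is correctly expressed in terms of $I$ and the order of the labels, (i)--(iii) are mechanical. I would also write the inverse map explicitly (the lattice point $(x,y)$ going back to the arc $\{i_{x+1},\overline{j}_{y+1}\}$) so that bijectivity is immediate rather than argued by cardinality.
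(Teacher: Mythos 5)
Your proposal is correct and takes essentially the same route as the paper: the identical dictionary sending the arc with labels $(E_{p-1},N_{q-1})$ to the lattice point $(p-1,q-1)$, with increasing $\leftrightarrow$ lying in $R_\nu$, non-crossing $\leftrightarrow$ $\nu$-compatibility, no isolated nodes $\leftrightarrow$ every row and column occupied, and the arc $(1,n)$ $\leftrightarrow$ the root. The only difference is one of detail: the paper asserts these equivalences outright, while you verify them via the height count $i_p-p$, which is a fine (and slightly more complete) write-up of the same argument.
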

\begin{proof}
Given a covering $(I,\overline{J})$-forest $F$, the arcs of $F$ can be identified with the labels at their end points, that is, pairs of the form $(E_i,N_j)$. 
For each such arc, insert a node at the coordinate $(i,j)$ of the grid from $(0,0)$ to $(|I|-1, |\overline{J}|-1)$, and call the resulting configuration of nodes in the grid $T$. 
The fact that $F$ has no isolated nodes guarantees that each row and column of the grid contains a node of $T$. 
The increasing condition guarantees that the nodes are in $R_\nu$, and the non-crossing condition guarantees that the nodes in $T$ are $\nu$-compatible. 
Thus $T$ is a $\nu$-Schr\"oder tree. 
This construction is readily invertible.  
\end{proof}

\begin{corollary}
Covering $(I,\overline{J})$-forests are in bijection with $\nu$-Schr\"oder paths. \qed
\end{corollary}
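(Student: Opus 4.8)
The plan is to obtain this as an immediate consequence of composing two bijections already in hand. By Theorem~\ref{thm.coveringforesttreebijection}, there is an explicit bijection from the set of covering $(I,\overline{J})$-forests (for the associated path $\nu$) to the set $\calT_\nu$ of $\nu$-Schr\"oder trees, sending an arc $(E_i,N_j)$ of a forest $F$ to a node at coordinate $(i,j)$. By Theorem~\ref{treePathBijection}, the right/left-flushing construction gives a bijection $\varphi\colon\calT_\nu\to\calP_\nu$ between $\nu$-Schr\"oder trees and $\nu$-Schr\"oder paths. Composing these two maps yields a bijection between covering $(I,\overline{J})$-forests and $\nu$-Schr\"oder paths, and that is all the statement asserts.

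The only point that needs a word of care is consistency of the path $\nu$: the path read off the red labels of a covering $(I,\overline{J})$-forest as in Figure~\ref{IJTreeAndNuSchroderTree} must be the same $\nu$ that governs $R_\nu$ and hence $\calT_\nu$ and $\calP_\nu$. This is already built into the statement of Theorem~\ref{thm.coveringforesttreebijection} (the forest determines $\nu$, and the resulting node configuration lies in $R_\nu$), so no extra argument is required. Thus I would simply state that the claim follows by composing the bijections of Theorem~\ref{thm.coveringforesttreebijection} and Theorem~\ref{treePathBijection}, and mark the proof with \qed.

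I do not anticipate a genuine obstacle here: the corollary is a formal composition of two previously established bijections. If one wanted an explicitly described map, one could unwind the composition — a forest $F$ is sent to the $\nu$-Schr\"oder path obtained by left-flushing the nodes $\{(i,j): (E_i,N_j)\in F\}$, equivalently by reading the $N/E/D$ labels of the corresponding $\nu$-Schr\"oder tree in counterclockwise post-order traversal — but this refinement is optional and not needed for the statement as given.
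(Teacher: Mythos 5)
Your proposal is correct and matches the paper's intent exactly: the corollary is stated with a bare \qed precisely because it follows by composing the bijection of Theorem~\ref{thm.coveringforesttreebijection} with the bijection $\varphi$ of Theorem~\ref{treePathBijection}. Your additional remark about the consistency of $\nu$ is a reasonable sanity check but, as you note, is already built into the earlier theorem.
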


\begin{remark}
The $\nu$-Schr\"oder trees are thus grid representations of covering $(I,\overline{J})$-forests, just as $\nu$-binary trees are grid representations of $(I,\overline{J})$-trees in \cite[Remark 3.7]{CPS20}.
\end{remark}

\begin{definition}
The {\em poset of covering $(I,\overline{J})$-forests} is the set of covering $(I,\overline{J})$-forests equipped with the partial order $T \leq T'$ if and only if the arcs of $T'$ are a subset of the arcs of $T$.
Note that $T$ is covered by $T'$ if $T'$ has all but one of the arcs of $T$.
\end{definition}

Recall that for a polyhedral complex $\mathcal{C}$, the \textit{face poset} of $\mathcal{C}$ is the poset of non-empty faces of $\mathcal{C}$ with partial order $F_1 \leq F_2$ if and only if $F_1 \subseteq F_2$. 
The combinatorial definition of the $\nu$-associahedron is then given as follows.

\begin{definition}
Let $\nu$ be the lattice path associated with the set of the covering $(I,\overline{J})$-forests. 
The {\em $\nu$-associahedron} is the polyhedral complex whose face poset is the poset of covering $(I,\overline{J})$-forests. 
\end{definition}

We can combinatorially describe the $\nu$-associahedron in terms of $\nu$-Schr\"oder objects. 

\begin{theorem}
\label{thm:isoposets} The following posets are isomorphic: 
\begin{itemize}
    \item[1.] The face poset of the $\nu$-associahedron.
    \item[2.] The poset of $\nu$-Schr\"oder trees.
    \item[3.] The poset of $\nu$-Schr\"oder paths.
\end{itemize} 
\end{theorem}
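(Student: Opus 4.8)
The plan is to establish the chain of poset isomorphisms

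\[
\text{(face poset of } A_\nu) \;\cong\; \text{(poset of covering } (I,\overline{J})\text{-forests)} \;\cong\; \calT_\nu \;\cong\; P_\nu,
\]

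where the middle isomorphism is the definition of $A_\nu$ and the outer two are already available to us. The second isomorphism holds by definition, so the real content is matching the cover relations on either side of the bijections of Theorem~\ref{thm.coveringforesttreebijection} and Theorem~\ref{treePathBijection}. Since Theorem~\ref{treePosetIsPathPoset} already identifies the poset of $\nu$-Schr\"oder trees with the poset of $\nu$-Schr\"oder paths, once I show that the forest/tree bijection of Theorem~\ref{thm.coveringforesttreebijection} is an isomorphism of posets, I am done.

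First I would set up the statement precisely: equip the covering $(I,\overline{J})$-forests with the order from the Definition just above (larger $=$ fewer arcs, covering $=$ deleting exactly one arc), and recall that the poset of $\nu$-Schr\"oder trees has cover relation ``$T'$ is a contraction of $T$,'' i.e.\ $T'$ is obtained from $T$ by deleting exactly one node. Under the bijection of Theorem~\ref{thm.coveringforesttreebijection}, arcs $(E_i,N_j)$ of $F$ correspond bijectively to nodes $(i,j)$ of the grid, so removing one arc from $F$ removes exactly one node from the corresponding tree $T$. Thus the immediate question is: when is the node set obtained by deleting a single node from a $\nu$-Schr\"oder tree again a $\nu$-Schr\"oder tree, and does this exactly coincide with the requirement that $F$ minus an arc be again a covering forest? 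The key step is to check that both sides impose the same local condition, namely that the deletion still leaves a node in every row and column (equivalently, no isolated node in $F$): a node $q$ may be deleted from $T$ precisely when it is not the only node in its row and not the only node in its column, and when the resulting configuration is still $\nu$-compatible and contains the root — but $\nu$-compatibility and the presence of the root are automatic after deleting a node, so the condition reduces to ``no row or column is emptied,'' which is exactly the ``no isolated nodes'' condition on $F$. I would also observe that the three types of contraction moves (left, right, diagonal) introduced in Section~\ref{sec.SchroderTrees} are exactly the combinatorial case analysis of which single-node deletions keep a $\nu$-Schr\"oder tree, so nothing new needs to be proved here: the cover relations match arc-deletion-by-arc-deletion.

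The main obstacle, and the place where I expect to spend the most care, is verifying that the contraction cover relation on $\calT_\nu$ really is ``delete one node and the result is still a $\nu$-Schr\"oder tree,'' with no spurious or missing covers. Specifically I must confirm: (i) every single-node deletion that produces a valid $\nu$-Schr\"oder tree is one of the three contraction types (so the cover relation of the contraction poset is not coarser than arc-deletion); and (ii) conversely, if $F' \subset F$ differ by one arc and both are covering forests, then the node $q$ removed from $T$ genuinely has the configuration required for a left, right, or diagonal contraction — in particular that $q$ has an appropriate ``parent'' node toward the root, which follows from Lemma~\ref{WNlemma} together with the presence of the arc $(1,n)$, i.e.\ the root. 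Part (ii) needs the remark that removing $q$ contracts the edge from $q$ to its root-side neighbor, so the neighbor structure is exactly as in the definitions of the contraction moves; part (i) needs a short argument that if deleting $q$ keeps every row and column occupied then $q$ is not a leaf-only obstruction and one of the three patterns applies. Once this dictionary is in place, the isomorphism of all three posets follows by composing the bijections of Theorems~\ref{thm.coveringforesttreebijection}, \ref{treePathBijection}, and~\ref{treePosetIsPathPoset}, since each has now been checked to preserve and reflect cover relations.
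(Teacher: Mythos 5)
Your proposal follows essentially the same route as the paper: the face poset equals the poset of covering $(I,\overline{J})$-forests by definition, arc deletion corresponds to node deletion under the bijection of Theorem~\ref{thm.coveringforesttreebijection} so that the cover relations match, and the tree--path isomorphism is supplied by Theorem~\ref{treePosetIsPathPoset}. The paper compresses the forest--tree step into a single sentence; your extra verification that the single-node deletions preserving the $\nu$-Schr\"oder tree axioms are exactly the three contraction moves is correct and simply makes explicit what the paper leaves implicit.
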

\begin{proof}
Posets 1 and 2 are seen to be isomorphic since the cover relation in the poset of covering $(I,\overline{J})$-forests is equivalent to contracting the corresponding node in the $\nu$-Schr\"oder tree. 
The isomorphism between posets 2 and 3 was shown in Theorem~\ref{treePosetIsPathPoset}. 
\end{proof}

\begin{corollary}\label{cor:isoposets}
The number of $i$-dimensional faces of the $\nu$-associahedron is the number of $\nu$-Schr\"oder paths with $i$ diagonal steps, and therefore, the $\nu$-Schr\"oder numbers $\ttsch_\nu(i)$ enumerate the faces of $\nu$-associahedra.
\qed
\end{corollary}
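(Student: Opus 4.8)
The plan is to upgrade the poset isomorphism of Theorem~\ref{thm:isoposets} to an isomorphism of \emph{graded} posets, where on the geometric side the grading is by dimension of faces and on the combinatorial side it is by number of diagonal steps. Once the two gradings are identified, the corollary is immediate: the $i$-dimensional faces of $A_\nu$ are exactly the rank-$i$ faces, which correspond under the isomorphism to the $\nu$-Schr\"oder paths of rank $i$, i.e.\ those with $i$ diagonal steps, whose number is $\ttsch_\nu(i)$ by definition.

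First I would check that $P_\nu$ is graded with rank function equal to the number of diagonal steps. The minimal elements of $P_\nu$ are the $\nu$-Schr\"oder paths that are not a contraction of any path; since each of the three contraction moves produces a path containing a new $D$ step, these minimal elements are precisely the $\nu$-Dyck paths, which have no diagonal steps. Inspecting the contraction moves one at a time---right contraction replaces an $EN$ pair by $D$, while left contraction and diagonal contraction each delete one $E$ step and one $N$ step and insert one $D$ step---every cover relation $\mu\prec\lambda$ satisfies $(\text{number of }D\text{ steps of }\lambda)=(\text{number of }D\text{ steps of }\mu)+1$. Hence every saturated chain from a minimal element up to $\mu$ has length equal to the number of diagonal steps of $\mu$, so $P_\nu$ is graded by this statistic.

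Next I would observe that the face poset of any polyhedral complex $\mathcal{C}$ is graded by dimension: if $F_1\le F_2$ then $\{G\in\mathcal{C}:F_1\subseteq G\subseteq F_2\}$ equals the set of faces of the polytope $F_2$ containing $F_1$ (all faces of $F_2$ lie in $\mathcal{C}$, and any $G\in\mathcal{C}$ with $G\subseteq F_2$ is a face of $F_2$), so $[F_1,F_2]$ is an interval in the face lattice of the polytope $F_2$ and is therefore graded by dimension; taking $F_1$ to be a vertex of $F_2$ shows the rank of a face equals its dimension. Applying this to $A_\nu$ and noting that the isomorphism of Theorem~\ref{thm:isoposets} matches the vertices of $A_\nu$ (the rank-$0$ faces) with the $\nu$-Dyck paths (the rank-$0$ elements of $P_\nu$) and preserves cover relations, the isomorphism is rank-preserving. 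Thus a face of $A_\nu$ has dimension $i$ exactly when the corresponding $\nu$-Schr\"oder path has $i$ diagonal steps, proving the claim.

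The only genuinely delicate point is verifying both gradings behave as asserted---that each path-contraction move changes the diagonal-step count by exactly one (a short case check) and that cover relations in the face poset of a polyhedral complex raise dimension by exactly one (which uses the local polytopality of $A_\nu$); everything else is transport along the already-established isomorphism. I would also note the alternative route through Theorem~\ref{thm.coveringforesttreebijection}: a covering $(I,\overline{J})$-forest obtained from an $(I,\overline{J})$-tree by deleting $i$ arcs indexes an $i$-dimensional face of $A_\nu$ by~\cite{CPS19}, and under the bijections it corresponds to a $\nu$-Schr\"oder tree with $i$ contracted nodes, hence to a $\nu$-Schr\"oder path with $i$ diagonal steps.
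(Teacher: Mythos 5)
Your proposal is correct and matches the paper's (implicit) argument: the paper states this corollary with no further proof, treating it as immediate from Theorem~\ref{thm:isoposets} once one observes that each contraction move adds exactly one diagonal step, so the rank/dimension gradings on the two posets coincide. Your careful verification that both posets are graded and that the isomorphism is rank-preserving is precisely the content the paper leaves to the reader.
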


A lattice is {\em Eulerian} if every nontrivial interval has an equal number of elements in the even ranks versus the odd ranks.

\begin{theorem}
Let $\widehat{P}$ denote the poset $P$ with an adjoined minimal element $\hat0$ and maximal element $\hat1$. 
If $P_\nu$ is the poset of $\nu$-Schr\"oder paths or $\nu$-Schr\"oder trees, then $\widehat{P}_\nu$ is a lattice. Furthermore, every interval $[x,y]$ in $\widehat{P}_\nu\setminus \{\hat{1}\}$ is an Eulerian lattice.  
\label{thm:lattice}
\end{theorem}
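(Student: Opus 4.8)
The plan is to leverage the fact, established in Theorem~\ref{thm:isoposets} and Corollary~\ref{cor:isoposets}, that $P_\nu$ is the face poset of the polyhedral complex $A_\nu$. For the lattice claim, I would argue that $A_\nu$ is a \emph{polyhedral complex in which every face is a (combinatorial) polytope and the intersection of any two faces is again a face}; for such a complex, the face poset with $\hat0$ and $\hat1$ adjoined is a lattice. The meet of two faces $F_1, F_2$ is their intersection (a face, possibly empty, in which case the meet is $\hat0$), and the join is obtained as the intersection of all faces containing both — this intersection is nonempty (it contains $F_1\cup F_2$ at least set-theoretically inside some common face if one exists, and is $\hat1$ otherwise) and is again a face of the complex. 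I would need to check that $A_\nu$ genuinely has the property that intersections of faces are faces; this follows from its construction in~\cite{CPS19} as a polyhedral complex cut out by a tropical hyperplane arrangement, or alternatively can be verified combinatorially on the covering $(I,\overline J)$-forest model, where the intersection of the face of $F$ and the face of $F'$ corresponds to the covering $(I,\overline J)$-forest whose arc set is the union of the arc sets of $F$ and $F'$ (when that union is still non-crossing, which it is when a common coarsening exists).

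For the Eulerian claim, the key observation is that an interval $[x,y]$ in $\widehat P_\nu \setminus\{\hat 1\}$ with $y\neq\hat1$ is exactly the face poset of the closed face of $A_\nu$ indexed by $y$, restricted to faces containing the face indexed by $x$ — equivalently, it is the face poset of a polytope (namely the polytopal face of $A_\nu$ corresponding to $y$), or an interval therein. Intervals in face lattices of polytopes are themselves face lattices of polytopes: the interval $[F_1, F_2]$ in the face lattice of a polytope $Q$ is the face lattice of the quotient polytope $F_2/F_1$ (this is classical — see e.g. Ziegler's \emph{Lectures on Polytopes}). Since the face lattice of any polytope is Eulerian (this is the Euler--Poincar\'e relation applied to every face, i.e.\ the content of the fact that polytopes are Eulerian posets), every such interval is Eulerian. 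The only subtlety is that $\widehat P_\nu$ adjoins \emph{one} global maximum $\hat1$ above \emph{all} the maximal faces of $A_\nu$ (the facets of the complex, which need not be a single polytope), so I restrict to $\widehat P_\nu\setminus\{\hat1\}$ precisely to stay inside honest polytopal face lattices; that is why the statement excludes $\hat1$.

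So the steps, in order, are: (1) recall from~\cite{CPS19} that $A_\nu$ is a polyhedral complex and in particular that the intersection of two faces is a face and that each face is a polytope; (2) deduce that $\widehat P_\nu$ is a lattice, exhibiting meet as intersection and join as intersection-of-upper-bounds, with $\hat0,\hat1$ handling the degenerate cases; (3) observe that for $x\le y$ with $y\ne\hat1$, the interval $[x,y]$ in $\widehat P_\nu$ equals an interval in the face lattice of the polytope indexed by $y$ (here $x=\hat0$ gives the whole face lattice of that polytope, $x\ne\hat0$ gives a genuine interval $[F_x,F_y]$); (4) invoke the classical fact that an interval $[F_1,F_2]$ in the face lattice of a polytope is the face lattice of the polytope $F_2/F_1$, hence is Eulerian; (5) transport everything back through the isomorphism of Theorem~\ref{thm:isoposets} to phrase it for $\nu$-Schr\"oder paths and trees.

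\emph{The main obstacle} I anticipate is step (1): pinning down precisely why intersections of faces of $A_\nu$ are faces and why each face is a polytope (as opposed to a more general polyhedron). The tropical-hyperplane-arrangement realization in~\cite{CPS19} should give this, but if one wants a self-contained combinatorial argument it requires checking that the covering $(I,\overline J)$-forest model behaves like a polytopal complex — that the poset of covering $(I,\overline J)$-forests below a fixed one is the face poset of a polytope, and that meets exist. I would lean on the cited realization result~\cite[Theorem~5.2]{CPS19} to avoid re-deriving the polytopality, and only then run the clean lattice-theoretic and Eulerian arguments above.
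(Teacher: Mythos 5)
Your proposal is correct and takes essentially the same route as the paper's proof: the authors likewise rely on $A_\nu$ being a polytopal complex (from the realization in \cite{CPS19}) to get the meet as intersection of faces and the join via intersection of common upper bounds, with $\hat0$ and $\hat1$ handling the degenerate cases, and they deduce the Eulerian property of $[x,y]$ for $y\neq\hat1$ from the fact that $[\hat0,y]$ is the face lattice of a convex polytope. Your additional care about why intersections of faces are faces, and the quotient-polytope description of intervals, only makes explicit what the paper leaves implicit.
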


\begin{proof}
Since $A_\nu$ is a polytopal complex, $P\cup \{\hat{0}\}$ is a meet semilattice, with the meet of two faces being their (possibly empty) intersection. 
Since $z \wedge_{\widehat{P}} \{\hat{1}\} = z$, $\widehat{P}$ is a meet semilattice. 

Let $x,y\in \widehat{P}$. If there exists upper bounds $z, w \in \widehat{P}$ of both $x$ and $y$, that is, $z$ and $w$ satisfy $x <_{\widehat{P}} z$, $y <_{\widehat{P}} z$, $x <_{\widehat{P}} w$ and $y <_{\widehat{P}} w$. Then the unique face at the intersection of the faces $w$ and $z$ is the unique join $x\vee_{\widehat{P}} y$. If there is no face containing $x$ and $y$ as subfaces in $A_\nu$, then $x\vee_{\widehat{P}} y = \hat{1}$. Every interval $[\hat{0},y] \in \widehat{P}\setminus \{\hat{1}\}$ corresponds to a convex polytope in $A_\nu$, and hence is Eulerian. Therefore every subinterval $[x,y] \subseteq [\hat{0},y]$ in $\widehat{P}\setminus \{\hat{1}\}$ is an Eulerian lattice.
\end{proof}

Let $\nu$ be a lattice path with $n$ steps. 
Pr\'eville-Ratelle and Viennot~\cite[Theorem 3]{PV17} showed that the $\nu$-Tamari lattice is isomorphic to an interval in the classical $(NE)^{n+1}$ Tamari lattice. 
Extending this isomorphism gives that the $\nu$-associahedron is isomorphic to a connected subcomplex of the boundary complex of the $n$-associahedron. As a result, we have the following corollary. 

\begin{corollary}
If $\nu = (EN)^{n+1}$, then $P_\nu \cup \{\hat{0}\} $ is isomorphic to the face lattice of the $n$-associahedron. For general $\nu$, $\widehat{P}_\nu$ is isomorphic to a sublattice of the face lattice of the $m$-associahedron, where $m$ is the number of steps in $\nu$.\qed 
\label{cor:sublattice}
\end{corollary}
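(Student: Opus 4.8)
The plan is to treat the two assertions in turn. The classical case is a direct restatement of the Schr\"oder-path description of the associahedron recalled in the Introduction, while the general case is obtained by upgrading the Pr\'eville--Ratelle--Viennot interval isomorphism from lattices to polyhedral complexes and combining it with Theorems~\ref{thm:isoposets} and~\ref{thm:lattice} (the latter already gives that $\widehat P_\nu$ is a lattice, so only a \emph{sublattice} embedding remains to be produced).

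For $\nu=(EN)^{n+1}$, the $\nu$-Schr\"oder paths are exactly the Schr\"oder paths indexing the faces of the $n$-associahedron $A^{(n)}$, as recalled in the Introduction, so $P_\nu$ is the face poset of $A^{(n)}$. Since $A^{(n)}$ is a polytope, its face poset already has $A^{(n)}$ itself as its maximum, and its face lattice is obtained by adjoining only the empty face as a new minimum; hence $P_\nu\cup\{\hat0\}$ is the face lattice of $A^{(n)}$.

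For general $\nu$ with $m$ steps, \cite[Theorem 3]{PV17} gives a lattice isomorphism from the $\nu$-Tamari lattice onto an interval $[\lambda_-,\lambda_+]$ of the classical Tamari lattice $\mathrm{Tam}_m$, the Tamari-oriented $1$-skeleton of the $m$-associahedron $A^{(m)}$; since $\mathrm{vert}(A_\nu)$ is the set of $\nu$-Dyck paths, this identifies it with $V:=[\lambda_-,\lambda_+]\subseteq\mathrm{vert}(A^{(m)})$. First I would promote this to an isomorphism of polyhedral complexes $A_\nu\cong C$, where $C$ is the full subcomplex of $\partial A^{(m)}$ induced on $V$: by Corollary~\ref{cor:isoposets} a face of $A_\nu$ is a $\nu$-Schr\"oder path, which names a face of $A^{(m)}$ of the same dimension (its number of diagonal steps) through the classical Schr\"oder-path model; applying the correspondence of \cite{PV17} to each of the Dyck paths refining that $\nu$-Schr\"oder path shows that ``weakly above $\nu$'' translates precisely to ``all vertices of the named face lie in $V$.'' Verifying that this bijection carries the contraction cover relations of $P_\nu$ to the face-poset cover relations of $C$ then gives $P_\nu\cong$ (face poset of $C$), hence $A_\nu\cong C$; in particular $C$ is connected, since $V$ is an interval.

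It remains to embed $\widehat P_\nu$ into the face lattice of $A^{(m)}$ as a sublattice, via $\hat0\mapsto\varnothing$, $\hat1\mapsto A^{(m)}$, and $F\mapsto F$ on faces of $C$. This is an order embedding, and it preserves meets, since the meet of two faces of $A^{(m)}$ is their intersection, a face whose vertices lie in $V$ and hence a face of $C$ (meets with $\hat0$ or $\hat1$ being trivial). I expect the main obstacle to be \emph{join-closure}: for faces $F_1,F_2$ of $C$ one must show that the smallest face of $A^{(m)}$ containing both --- in the dissection model, the face whose diagonals are those common to $F_1$ and $F_2$ --- is either again a face of $C$ or all of $A^{(m)}$, so that the image is closed under joins and the joins there agree with those of $\widehat P_\nu$. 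I would derive this from the specific form of the interval: since $\lambda_-$ and $\lambda_+$ are the extremal $\nu$-compatible Dyck paths, the dissections arising from $\nu$-Schr\"oder paths are precisely those lying, in the appropriate sense, between $\lambda_-$ and $\lambda_+$, and this betweenness condition is inherited by a common coarsening of two such dissections unless that coarsening is the empty dissection (the top face). This yields the sublattice embedding, and for $\nu=(EN)^{n+1}$, where $V$ is all of $\mathrm{vert}(A^{(n)})$ and $C=\partial A^{(n)}$, the image is the entire face lattice, recovering the first assertion.
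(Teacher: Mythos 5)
Your overall route is the same as the paper's (invoke the Pr\'eville--Ratelle--Viennot interval isomorphism, extend it to an identification of $A_\nu$ with a subcomplex $C$ of $\partial A^{(m)}$, then read off the lattice statement), but the step you single out as the remaining obstacle --- join-closure with $\hat1\mapsto A^{(m)}$ --- is not just unproved, it is false as you have formulated it. Concretely, take $\nu=ENN$, so $m=3$ and the ambient polytope is the $3$-associahedron (dissections of a hexagon). There are three $\nu$-Dyck paths and the $\nu$-Tamari lattice is a chain $u\prec w\prec v$, so $A_\nu$ is a path: three vertices and two edges, with no $2$-faces. In $A^{(3)}$ the triangulations $u$ and $v$ differ by exactly two flips, and two flips can destroy at most two of the three diagonals of $u$; hence $u$ and $v$ share a diagonal $d$, and the smallest face of $A^{(3)}$ containing both is the $2$-face indexed by the dissection $\{d\}$. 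This face is neither a face of $C$ (which is $1$-dimensional) nor all of $A^{(3)}$, so the image of your map is not closed under joins, and the map sending $\hat1$ to $A^{(m)}$ does not preserve the join $u\vee v=\hat1$ of $\widehat P_\nu$. The same example refutes the ``betweenness is inherited by a common coarsening unless it is the empty dissection'' heuristic: the coarsening $\{d\}$ is nonempty yet is refined by triangulations outside the canopy class. A correct embedding must instead send $\hat1$ to the \emph{smallest face of $A^{(m)}$ containing the whole subcomplex} $C$ (in the example, the $2$-face $\{d\}$), and one must then prove that for any two faces of $C$ with no common upper bound in $C$, their join in the face lattice of $A^{(m)}$ is exactly that face; this is an additional argument, not a formality. (To be fair, the paper itself only gestures at this corollary --- it asserts the subcomplex statement and writes ``as a result'' --- so you are filling in a step the authors skipped, but the particular filling you propose would not survive the check.)

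Two smaller points. Your meet-closure argument is fine, since $C$ is a subcomplex and intersections of its faces are faces of its faces. In the classical case, note that the paths weakly above $(EN)^{n+1}$ are not the Schr\"oder paths from $(0,0)$ to $(n+1,n+1)$ of the Introduction (those correspond to $\nu=(NE)^{n+1}$, equivalently $(EN)^n$ after stripping the outer $N\cdots E$); as written, $(EN)^{n+1}$ yields the face lattice of the $(n+1)$-associahedron. This off-by-one is inherited from the statement in the paper, but your justification ``exactly the Schr\"oder paths indexing the faces of the $n$-associahedron'' is not literally correct and should be adjusted to whichever indexing you adopt.
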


Since the classical Tamari lattice can be partitioned into disjoint intervals of $\nu$-Tamari lattices \cite[Theorem 3]{PV17}, another consequence is that
$$ \bigcup_{\substack{\nu \text{ path of} \\ \text{length }n }} \widehat{P}_\nu \cong F$$
where $F$ is a sublattice of the face lattice of the $n$-associahedron.

For our last result, we apply discrete Morse theory to the contraction poset of $\nu$-Schr\"oder paths to show that the $\nu$-associahedron $A_\nu$ is contractible. 
See~\cite{Koz08} for background on discrete Morse theory.  

\begin{definition}
Given a poset $P$, a \textit{partial matching} in $P$ is a matching in the underlying graph of the Hasse diagram of $P$. 
That is, a subset $M\subseteq P\times P$, such that 
\begin{itemize}
    \item $(a,b)\in M$ implies $a\prec b$;
    \item each element $a\in P$ belongs to at most one element of $M$.
\end{itemize}
When $(a,b) \in M$, we write $a=d(b)$ and $b = u(a)$. 
A partial matching is {\em acyclic} if there does not exist a cycle 
$$b_1  \succ d(b_1) \prec b_2 \succ d(b_2) \prec \cdots \prec b_n \succ d(b_n) \prec b_1$$
where $n\geq 2$ and the $b_i \in P$ are distinct. 
Any elements of $P$ not in an element of $M$ are called {\em critical} elements.
\label{def:acyclicMatching}
\end{definition}

The main theorem of discrete Morse theory for complexes is the following. 
\begin{theorem}[{\cite[Theorem 11.13]{Koz08}}] \label{thm:morse} Let $\mathcal{C}$ be a polyhedral complex with face poset $F$. 
Let $M$ be an acyclic matching on $F$, and let $c_i$ denote the number of critical elements in $F$ corresponding to $i$-dimensional faces of $\mathcal{C}$. Then $\mathcal{C}$ is homotopy equivalent to a subcomplex of $\mathcal{C}$ consisting of $c_i$ faces of dimension $i$. 
\end{theorem}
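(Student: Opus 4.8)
The plan is to prove this the way discrete Morse theory always proves such statements: adapt a filtration of $\mathcal{C}$ to the matching $M$ and read off the homotopy type as one sweeps through the filtration. First I would convert the acyclic matching into an ordering of the cells. Since $M$ is acyclic, the directed graph obtained from the Hasse diagram of $F$ by orienting every matched edge $a \prec u(a)$ upward and every other cover edge downward has no directed cycle; a linear extension of its transitive closure then produces a discrete Morse function $f\colon F \to \bbR$ that is weakly increasing along $\prec$, takes equal values on the two cells of each matched pair, is strictly increasing across every unmatched cover, and is injective on the set of critical cells. The existence of such an $f$ is exactly what the acyclicity hypothesis buys us.

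Next I would introduce the level subcomplexes $\mathcal{C}(t) = \bigcup_{f(\sigma) \le t} \overline{\sigma}$ and track how the homotopy type changes as $t$ increases from $-\infty$ to $+\infty$. There are two kinds of events. When $t$ passes the common value of a matched pair $(a,b)$ with $a \prec b$, the complex $\mathcal{C}(t')$ is obtained from $\mathcal{C}(t)$ by adjoining the cell $b$ together with the codimension-one face $a$; here $a$ is a \emph{free face} of $b$ in $\mathcal{C}(t')$, since every other face of $b$ has strictly smaller $f$-value and is already present, while any other cell having $a$ on its boundary is matched with a cell carried to a larger value and is therefore absent. Adjoining a free face together with the cell it bounds is an elementary collapse, hence a deformation retraction, so the homotopy type is unchanged. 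When $t$ passes the value of a critical cell $\sigma$ of dimension $i$, the complex $\mathcal{C}(t')$ is $\mathcal{C}(t)$ with a single $i$-cell attached along its boundary, which up to homotopy is the mapping cone of the attaching map. Composing all these events shows that $\mathcal{C} = \mathcal{C}(+\infty)$ is homotopy equivalent to a CW complex built by attaching exactly one $i$-cell for each critical $i$-cell, that is, a subcomplex of $\mathcal{C}$ with $c_i$ faces of dimension $i$.

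The main obstacle is the careful verification, in the regular CW (polyhedral) setting, of the free-face claim used in the collapsing step: one must confirm that for a matched pair $(a,b)$ the cell $a$ is genuinely a codimension-one face of $b$ (immediate here, since a cover in the face poset of a polyhedral complex raises dimension by exactly one) and, more delicately, that no other cell already present in $\mathcal{C}(t')$ has $a$ on its boundary, so that deleting $a$ and $b$ is a legitimate collapse. This is precisely where regularity of the complex and the bookkeeping in the construction of $f$ from $M$ are both essential, and it is what makes the acyclicity hypothesis indispensable rather than cosmetic. Since the statement is quoted verbatim as \cite[Theorem 11.13]{Koz08}, in the paper I would simply cite that reference; the sketch above is the argument it packages.
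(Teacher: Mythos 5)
The paper offers no proof of this statement---it is quoted directly from Kozlov as \cite[Theorem 11.13]{Koz08}---and you correctly note that citing the reference is all that is required here. Your sketch (linear extension of the acyclic matching into a discrete Morse function, level subcomplexes, elementary collapses across matched pairs, cell attachments at critical cells) is the standard argument packaged by that citation and is sound in outline, with the only quibble being that the resulting CW complex is in general merely homotopy equivalent to one with $c_i$ cells in dimension $i$ rather than literally a subcomplex of $\mathcal{C}$ unless the critical cells happen to form one, a nuance inherited from the paper's own paraphrase of Kozlov's statement.
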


\begin{theorem}\label{thm:contractible}
The $\nu$-associahedron $A_\nu$ is contractible. 
\end{theorem}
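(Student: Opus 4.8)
By Theorem~\ref{thm:isoposets} the face poset of $A_\nu$ is the contraction poset $P_\nu$ of $\nu$-Schr\"oder paths, so by Theorem~\ref{thm:morse} it suffices to build an acyclic matching $M$ on $P_\nu$ (in the sense of Definition~\ref{def:acyclicMatching}) whose unique critical element is a $\nu$-Dyck path. Since a $\nu$-Dyck path is a $0$-dimensional face, such a matching forces $A_\nu$ to be homotopy equivalent to a point. As in the proof of Proposition~\ref{prop.narayana} it is convenient (and harmless) to assume that $\nu$ begins with an $N$ step and ends with an $E$ step.

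The matching I would use is described by scanning a $\nu$-Schr\"oder path $\pi$ from the origin and toggling its first \emph{active site}. Since every cover in $P_\nu$ fuses an $N$ step and an $E$ step into a diagonal step, the places where $\pi$ can be changed by a single contraction or its reverse are the valleys (consecutive $EN$ patterns) and the existing $D$ steps; the first active site is the leftmost of these. If it is a valley, match $\pi$ upward by the right contraction that replaces this $EN$ by a $D$ step (always a $\nu$-Schr\"oder path, as $D$ lies weakly above $EN$). If it is a $D$ step, match $\pi$ downward by the unique expansion -- among the reverses of the right, left and diagonal contractions of the paper -- that produces a $\nu$-Schr\"oder path of which $\pi$ is a contraction and whose first active site is again at this position; since the three contraction moves account for exactly the ways a $D$ step can be created, this choice is forced. (In Figure~\ref{fig:53poset} one sees that the right contraction and its reverse suffice almost always; the reverses of the left and diagonal contractions are only needed to split a $D$ step whose local replacement by $EN$ would drop below $\nu$.)

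It then remains to check three things. First, $M$ is a genuine partial matching: each $\pi$ lies in at most one pair because toggling at the first active site is an involution, and each pair is a cover since a contraction changes the number of diagonal steps by exactly one. Second, the critical elements are precisely the $\nu$-Schr\"oder paths with no active site, i.e.\ with neither a $D$ step nor a valley; the only such path is $N^aE^b$, the highest $\nu$-Dyck path, which is a single vertex of $A_\nu$. Third -- and this is the crux -- $M$ is acyclic: along any alternating sequence $b_1 \succ d(b_1) \prec b_2 \succ d(b_2) \prec \cdots$ one must exhibit a monovariant (for instance, the position of the first active site refined by a lexicographic comparison of the paths) that strictly decreases at each step, so the sequence cannot close up into a cycle. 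With acyclicity established, Theorem~\ref{thm:morse} applied to this matching shows $A_\nu$ is homotopy equivalent to a complex with a single $0$-cell, hence contractible.

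The main obstacle is the interaction of the three contraction types: making the ``expansion of a $D$ step'' well defined and involutive -- especially for the $D$ steps whose naive reverse $D\mapsto EN$ leaves $R_\nu$, where a non-local left- or diagonal-contraction reverse must be used -- and then finding a statistic that genuinely certifies acyclicity of the resulting matching. The reduction on $\nu$, the cover-relation bookkeeping, and the identification of the critical path $N^aE^b$ are routine once the matching is correctly set up.
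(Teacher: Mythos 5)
Your overall strategy coincides with the paper's: transport the face poset through Theorem~\ref{thm:isoposets} to the contraction poset of $\nu$-Schr\"oder paths and apply Theorem~\ref{thm:morse} to a matching that toggles the first ``active site'' (first valley or $D$ step), with unique critical element $N^aE^b$. However, two essential pieces are missing. First, your downward rule is not well defined as stated. You allow the expansion of the first $D$ step to be the reverse of a right, left, or diagonal contraction and assert that the choice is ``forced,'' but you give no argument that exactly one of these reverses produces a $\nu$-Schr\"oder path with first active site at that position (for example, when the prefix before the $D$ ends in an $E$ step, an available reverse left contraction also creates a valley at that very corner, so uniqueness genuinely needs proof). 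Moreover, the worry motivating this complication is unfounded: for a \emph{small} $\nu$-Schr\"oder path, a $D$ step from $(x,y)$ to $(x+1,y+1)$ stays weakly above $\nu$ only if $\nu$ has height at most $y$ throughout $(x,x+1)$, so the local replacement $D\mapsto EN$ can never drop below $\nu$. The paper therefore always matches downward by this local replacement; its matching consists exactly of the pairs $(\pi,\sigma)$ in which $\pi$ is obtained from $\sigma$ by replacing with $EN$ the first $D$ step of $\sigma$ not preceded by any valley, and it then verifies directly that no path can be the lower partner of two distinct upper elements. Your ``toggling is an involution'' claim presupposes precisely the well-definedness that is in question.

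Second, and more seriously, you do not prove acyclicity, which you yourself identify as the crux: you only name a candidate monovariant (position of the first active site, refined lexicographically) without verifying that it decreases along an alternating cycle. The paper closes this gap with an area argument: every matched cover raises $\mathrm{area}$ by exactly $1/2$, while an arbitrary contraction either decreases the area or is a right contraction raising it by exactly $1/2$; hence along a cycle every unmatched cover must be a right contraction, and since $d(\pi_1)$ has no valley before its newly created first one, the only right contraction landing on a downward-matched path recreates $\pi_1$, forcing $\pi_1=\pi_2$ and contradicting $n\geq 2$. Until you either verify your proposed statistic or supply an argument of this kind, and pin down the downward rule, the proof is incomplete; the reduction via Theorems~\ref{thm:isoposets} and~\ref{thm:morse} and the identification of $N^aE^b$ as the unique critical element are fine.
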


\begin{proof}
Let $A_\nu$ be the $\nu$-associahedron with face poset $P_\nu$. 
By Theorems~\ref{thm:isoposets} and~\ref{thm:morse}, it suffices to find an acyclic matching on $P_\nu$ with a single critical element corresponding to a vertex in $A_\nu$. 

Let $M$ be the set of edges $(\pi,\sigma)$ where $\pi$ is formed from $\sigma$ by replacing with $EN$ the first $D$ step not preceded by any valley. We claim that $M$ is the desired acyclic partial matching. 
See Figure~\ref{fig:53poset} for an example.

First we check that $M$ is in fact a partial matching. 
If $(\pi,\sigma) \in M$, then $\sigma$ is formed by a contraction of $\pi$, so $\pi\prec \sigma$ in $P_\nu$. 
Next we show that a path $\pi$ cannot be in more than one element of $M$. 
First, there cannot be a pair of elements  $(\tau,\pi)$ and $(\pi,\sigma)$ in $M$ because all $D$ steps in $\pi=d(\sigma)$ are preceded by the added valley and so $\tau=d(\pi)$ cannot exist.
Second, since $d(\pi)$ is unique by construction, it follows that there cannot be two pairs $(\tau,\pi)$ and $(\tau',\pi)$ in $M$ where $\tau\neq \tau'$. 
It remains to check that there are no two pairs $(\pi,\sigma)$ and $(\pi,\rho)$ in $M$ with $\sigma\neq \rho$. Suppose the contrary, then $\sigma$ and $\rho$ can be partitioned into $\sigma = \sigma_1D_\sigma\sigma_2$ and $\rho = \rho_1D_\rho \rho_2$, where the $D_\sigma$ and $D_\rho$ steps are the first $D$ steps not preceded by a valley in the respective paths $\sigma$ and $\rho$. If $\sigma_1$ and $\rho_1$ have the same number of steps, then it follows from $\pi = \sigma_1EN\sigma_2 = \rho_1EN\rho_2$ that $\sigma_1 = \rho_1$. However, we cannot have $\sigma_1= \rho_1$, because then we would also have $\sigma_2 = \rho_2$, from which it would follow that $\sigma = \sigma_1D_\sigma\sigma_2 = \rho_1D_\rho \rho_2 = \rho$. Thus either $\sigma_1$ has fewer steps than $\rho_1$ or vice versa. If $\sigma_1$ has fewer steps, then $\pi$ can be partitioned as $\pi = \pi_1 EN \pi_2 EN \rho_2$, where $\pi_1EN\pi_2 = \rho_1$. However, this means $\rho = \pi_1EN\pi_2D_\rho \rho_2$ has a valley before $D_\rho$, which contradicts the fact that $(\pi, \rho)$ is in $M$. Similarly $\rho_1$ cannot have fewer steps. 
We conclude that $M$ is a partial matching. 

Next, we check that $M$ is acyclic. Suppose to the contrary that there exists a cycle 
$$\pi_1  \succ d(\pi_1) \prec \pi_2 \succ d(\pi_2) \prec \cdots \prec \pi_n \succ d(\pi_n) \prec \pi_1$$
with $n\geq 2$. 
Note that any pair $(d(\pi_i), \pi_i)$ satisfies $\mathrm{area}(d(\pi_i))=\mathrm{area}(\pi_i) - 1/2 $.
Every pair $d(\pi_i) \prec \pi_j$ in the cycle is related by a contraction of $d(\pi_i)$, and each contraction move either decreases the area of the path, or adds exactly half a unit of area. 
Since $\mathrm{area}(\pi_1)$ at the beginning and the end of the cycle must be equal, each contraction between $d(\pi_i)$ and $\pi_j$ must increase the area by exactly one half, and must therefore be a right contraction. 
The first valley in $d(\pi_1)$ is the one added to $\pi_1$. Since $\pi_2$ must have a $D$ step not preceded by a valley, it must be a result of a right contraction at the first $EN$ pair in $d(\pi_1)$, which means $\pi_1 = \pi_2$. 
Therefore $n<2$, giving the desired contradiction, and so $M$ is acyclic.

Finally, we check that the only critical element in $P_\nu$ is the path $N^aE^b$. Any other path $\pi$ will have either a first $D$ step not preceded by a valley, or not. If it does, then $(d(\pi),\pi) \in M$. If it does not have such a $D$, step, then it must have a first valley. Letting $\sigma$ be the path $\pi$ but with the first valley replaced with a $D$ step gives an element $(\pi, \sigma) \in M$. 
\end{proof}

\begin{remark}
The acyclic matching $M$ is more difficult to describe in the setting of $\nu$-Schr\"oder trees or of $(I,\overline{J})$-trees, thus highlighting a benefit of the $\nu$-Schr\"oder path perspective. 
The utility of paths is the clear linear order on the steps, making it easy to check if valleys occur before a $D$ step.
\end{remark}

\makeatletter
\tikzset{%
  prefix node name/.code={%
    \tikzset{%
      name/.code={\edef\tikz@fig@name{#1 ##1}}
    }%
  }%
}
\makeatother
\begin{figure}[ht!]
\begin{tikzpicture}[scale=1.2]
\tikzstyle{vertex}=[circle, inner sep=0pt, minimum size=0pt] 
\begin{scope}[scale=.25, xshift=270, yshift=500, prefix node name=C1]
	\draw[fill, color=gray!25] (0,0) rectangle (5,1);
	\draw[fill, color=gray!25] (1,1) rectangle (5,2);
	\draw[fill, color=gray!25] (3,2) rectangle (5,3);
	\draw[very thin, color=gray!100] (0,0) grid (5,3);		
	\draw[very thick, color=red] (0,0)--(0,1)--(1,2)--(2,2)--(3,3)--(5,3);
	\node[vertex](b) at (2.5,-0.5){};
\end{scope}
\begin{scope}[scale=.25, xshift=770, yshift=500, prefix node name=C2]
	\draw[fill, color=gray!25] (0,0) rectangle (5,1);
	\draw[fill, color=gray!25] (1,1) rectangle (5,2);
	\draw[fill, color=gray!25] (3,2) rectangle (5,3);
	\draw[very thin, color=gray!100] (0,0) grid (5,3);		
	\draw[very thick, color=red] (0,0)--(0,1)--(2,3)--(5,3);
	\node[vertex](b) at (2.5,-0.5){};
\end{scope}
\begin{scope}[scale=.25, xshift=-90, yshift=250, prefix node name=B1]
	\draw[fill, color=gray!25] (0,0) rectangle (5,1);
	\draw[fill, color=gray!25] (1,1) rectangle (5,2);
	\draw[fill, color=gray!25] (3,2) rectangle (5,3);
	\draw[very thin, color=gray!100] (0,0) grid (5,3);		
	\draw[very thick, color=red] (0,0)--(0,1)--(1,2)--(3,2)--(3,3)--(5,3);
	\node[vertex](t) at (2.5,3.5){};
	\node[vertex](b) at (2.5,-0.5){};
\end{scope}
\begin{scope}[scale=.25, xshift=90, yshift=250, prefix node name=B2]
	\draw[fill, color=gray!25] (0,0) rectangle (5,1);
	\draw[fill, color=gray!25] (1,1) rectangle (5,2);
	\draw[fill, color=gray!25] (3,2) rectangle (5,3);
	\draw[very thin, color=gray!100] (0,0) grid (5,3);		
	\draw[very thick, color=red] (0,0)--(0,1)--(1,1)--(1,2)--(2,2)--(3,3)--(5,3);
	\node[vertex](t) at (2.5,3.5){};
	\node[vertex](b) at (2.5,-0.5){};
\end{scope}
\begin{scope}[scale=.25, xshift=270, yshift=250, prefix node name=B3]
	\draw[fill, color=gray!25] (0,0) rectangle (5,1);
	\draw[fill, color=gray!25] (1,1) rectangle (5,2);
	\draw[fill, color=gray!25] (3,2) rectangle (5,3);
	\draw[very thin, color=gray!100] (0,0) grid (5,3);		
	\draw[very thick, color=red] (0,0)--(0,2)--(2,2)--(3,3)--(5,3);
	\node[vertex](t) at (2.5,3.5){};
	\node[vertex](b) at (2.5,-0.5){};
\end{scope}
\begin{scope}[scale=.25, xshift=630, yshift=250, prefix node name=B4]
	\draw[fill, color=gray!25] (0,0) rectangle (5,1);
	\draw[fill, color=gray!25] (1,1) rectangle (5,2);
	\draw[fill, color=gray!25] (3,2) rectangle (5,3);
	\draw[very thin, color=gray!100] (0,0) grid (5,3);		
	\draw[very thick, color=red] (0,0)--(0,1)--(1,2)--(2,2)--(2,3)--(5,3);
	\node[vertex](t) at (2.5,3.5){};
	\node[vertex](b) at (2.5,-0.5){};
\end{scope}
\begin{scope}[scale=.25, xshift=450, yshift=250, prefix node name=B5]
	\draw[fill, color=gray!25] (0,0) rectangle (5,1);
	\draw[fill, color=gray!25] (1,1) rectangle (5,2);
	\draw[fill, color=gray!25] (3,2) rectangle (5,3);
	\draw[very thin, color=gray!100] (0,0) grid (5,3);		
	\draw[very thick, color=red]
	(0,0)--(0,2)--(1,2)--(2,3)--(5,3);
	\node[vertex](t) at (2.5,3.5){};
	\node[vertex](b) at (2.5,-0.5){};
\end{scope}
\begin{scope}[scale=.25, xshift=810, yshift=250, prefix node name=B6]
	\draw[fill, color=gray!25] (0,0) rectangle (5,1);
	\draw[fill, color=gray!25] (1,1) rectangle (5,2);
	\draw[fill, color=gray!25] (3,2) rectangle (5,3);
	\draw[very thin, color=gray!100] (0,0) grid (5,3);		
	\draw[very thick, color=red] (0,0)--(0,2)--(1,3)--(5,3);
	\node[vertex](t) at (2.5,3.5){};
	\node[vertex](b) at (2.5,-0.5){};
\end{scope}
\begin{scope}[scale=.25, xshift=990, yshift=250, prefix node name=B7]
	\draw[fill, color=gray!25] (0,0) rectangle (5,1);
	\draw[fill, color=gray!25] (1,1) rectangle (5,2);
	\draw[fill, color=gray!25] (3,2) rectangle (5,3);
	\draw[very thin, color=gray!100] (0,0) grid (5,3);		
	\draw[very thick, color=red] (0,0)--(0,1)--(1,2)--(1,3)--(5,3);
	\node[vertex](t) at (2.5,3.5){};
	\node[vertex](b) at (2.5,-0.5){};	
\end{scope}
\begin{scope}[scale=.25, xshift=1170, yshift=250, prefix node name=B8]
	\draw[fill, color=gray!25] (0,0) rectangle (5,1);
	\draw[fill, color=gray!25] (1,1) rectangle (5,2);
	\draw[fill, color=gray!25] (3,2) rectangle (5,3);
	\draw[very thin, color=gray!100] (0,0) grid (5,3);		
	\draw[very thick, color=red] (0,0)--(0,1)--(1,1)--(1,2)--(2,3)--(5,3);	
	\node[vertex](t) at (2.5,3.5){};
	\node[vertex](b) at (2.5,-0.5){};
\end{scope}
\begin{scope}[scale=.25, prefix node name=A1]
	\draw[fill, color=gray!25] (0,0) rectangle (5,1);
	\draw[fill, color=gray!25] (1,1) rectangle (5,2);
	\draw[fill, color=gray!25] (3,2) rectangle (5,3);
	\draw[very thin, color=gray!100] (0,0) grid (5,3);		
	\draw[very thick, color=red] (0,0)--(0,1)--(1,1)--(1,2)--(3,2)--(3,3)--(5,3);
	\node[vertex](t) at (2.5,3.5){};
\end{scope}
\begin{scope}[scale=.25, xshift=180, prefix node name=A2]
	\draw[fill, color=gray!25] (0,0) rectangle (5,1);
	\draw[fill, color=gray!25] (1,1) rectangle (5,2);
	\draw[fill, color=gray!25] (3,2) rectangle (5,3);
	\draw[very thin, color=gray!100] (0,0) grid (5,3);		
	\draw[very thick, color=red] (0,0)--(0,2)--(3,2)--(3,3)--(5,3);
	\node[vertex](t) at (2.5,3.5){};
\end{scope}
\begin{scope}[scale=.25, xshift=540, prefix node name=A3]
	\draw[fill, color=gray!25] (0,0) rectangle (5,1);
	\draw[fill, color=gray!25] (1,1) rectangle (5,2);
	\draw[fill, color=gray!25] (3,2) rectangle (5,3);
	\draw[very thin, color=gray!100] (0,0) grid (5,3);		
	\draw[very thick, color=red] (0,0)--(0,1)--(1,1)--(1,2)--(2,2)--(2,3)--(5,3);
	\node[vertex](t) at (2.5,3.5){};
\end{scope}
\begin{scope}[scale=.25, xshift=360, prefix node name=A4]
	\draw[fill, color=gray!25] (0,0) rectangle (5,1);
	\draw[fill, color=gray!25] (1,1) rectangle (5,2);
	\draw[fill, color=gray!25] (3,2) rectangle (5,3);
	\draw[very thin, color=gray!100] (0,0) grid (5,3);		
	\draw[very thick, color=red] (0,0)--(0,2)--(2,2)--(2,3)--(5,3);
	\node[vertex](t) at (2.5,3.5){};
\end{scope}
\begin{scope}[scale=.25, xshift=720, prefix node name=A5]
	\draw[fill, color=gray!25] (0,0) rectangle (5,1);
	\draw[fill, color=gray!25] (1,1) rectangle (5,2);
	\draw[fill, color=gray!25] (3,2) rectangle (5,3);
	\draw[very thin, color=gray!100] (0,0) grid (5,3);		
	\draw[very thick, color=red] (0,0)--(0,2)--(1,2)--(1,3)--(5,3);
	\node[vertex](t) at (2.5,3.5){};
\end{scope}
\begin{scope}[scale=.25, xshift=900, prefix node name=A6]
	\draw[fill, color=gray!25] (0,0) rectangle (5,1);
	\draw[fill, color=gray!25] (1,1) rectangle (5,2);
	\draw[fill, color=gray!25] (3,2) rectangle (5,3);
	\draw[very thin, color=gray!100] (0,0) grid (5,3);		
	\draw[very thick, color=red] (0,0)--(0,3)--(5,3);
	\node[vertex](t) at (2.5,3.5){};		
\end{scope}
\begin{scope}[scale=.25, xshift=1080, prefix node name=A7]
	\draw[fill, color=gray!25] (0,0) rectangle (5,1);
	\draw[fill, color=gray!25] (1,1) rectangle (5,2);
	\draw[fill, color=gray!25] (3,2) rectangle (5,3);
	\draw[very thin, color=gray!100] (0,0) grid (5,3);		
	\draw[very thick, color=red] (0,0)--(0,1)--(1,1)--(1,3)--(5,3);
	\node[vertex](t) at (2.5,3.5){};	
\end{scope}

\draw[ultra thin] (A1 t)--(B2 b);
\draw[ultra thin] (A2 t)--(B1 b);
\draw[ultra thin] (A3 t)--(B2 b); \draw[ultra thin] (A3 t)--(B8 b);
\draw[ultra thin] (A4 t)--(B3 b); 
\draw[ultra thin] (A5 t)--(B4 b); \draw[ultra thin] (A5 t)--(B5 b);
\draw[ultra thin] (A6 t)--(B6 b); \draw[ultra thin] (A6 t)--(B7 b);
\draw[ultra thin] (A7 t)--(B8 b);
\draw[ultra thin] (B1 t)--(C1 b); 
\draw[ultra thin] (B3 t)--(C1 b); 
\draw[ultra thin] (B4 t)--(C1 b);
\draw[ultra thin] (B4 t)--(C2 b); 
\draw[ultra thin] (B5 t)--(C1 b);
\draw[ultra thin] (B6 t)--(C2 b); 
\draw[ultra thin] (B7 t)--(C2 b);

\draw[ultra thick,color=Cerulean] (A1 t)--(B1 b);
\draw[ultra thick,color=Cerulean] (A2 t)--(B3 b);
\draw[ultra thick,color=Cerulean] (A3 t)--(B4 b);
\draw[ultra thick,color=Cerulean] (A4 t)--(B5 b);
\draw[ultra thick,color=Cerulean] (A5 t)--(B6 b);
\draw[ultra thick, color=Cerulean] (A7 t)--(B7 b);
\draw[ultra thick,color=Cerulean] (B2 t)--(C1 b);
\draw[ultra thick,color=Cerulean] (B8 t)--(C2 b); 

\end{tikzpicture}
\caption{The contraction poset of $(3,5)$-Schr\"oder paths, which is the face poset of the $(3,5)$-associahedron of Figure~\ref{fig.35associahedron}. 
The blue edges denote the acyclic partial matching $M$ described in the proof of Theorem~\ref{thm:contractible}.
The path $N^3E^5$ is the unique critical element in this matching.}
\label{fig:53poset}
\end{figure}
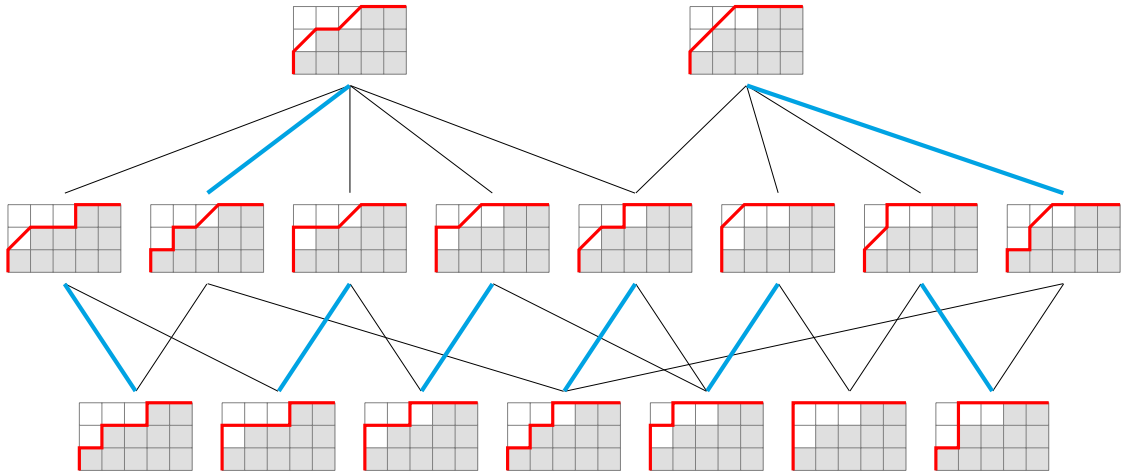

\begin{figure}[ht!]
\begin{tikzpicture}[scale=.75]
\tikzstyle{vertex}=[circle, fill=black, inner sep=0pt, minimum size=5pt]
\begin{scope}[scale=.6, xshift=0, yshift=0]
	\vertex at (0,0)  {};
	\vertex at (5,4) {};
	\vertex at (10,8) {};
	\vertex at (6,13) {};
	\vertex at (.5,9.7) {};
	\vertex at (-4,7) {};
	\vertex at (-4,3) {};
	\draw[thick] (0,0) -- (10,8) -- (6,13) -- (-4,7) -- (-4,3) -- (0,0);
	\draw[thick] (5,4) -- (.5,9.7);
\end{scope}	

\begin{scope}[scale=0.2, xshift=-65, yshift=350]
	\draw[fill, color=gray!25] (0,0) rectangle (5,1);
	\draw[fill, color=gray!25] (1,1) rectangle (5,2);
	\draw[fill, color=gray!25] (3,2) rectangle (5,3);
	\draw[very thin, color=gray!100] (0,0) grid (5,3);		
	\draw[thick, color=red] (0,0)--(0,1)--(1,2)--(2,2)--(3,3)--(5,3);
\end{scope}

\begin{scope}[scale=0.2, xshift=400, yshift=680]
	\draw[fill, color=gray!25] (0,0) rectangle (5,1);
	\draw[fill, color=gray!25] (1,1) rectangle (5,2);
	\draw[fill, color=gray!25] (3,2) rectangle (5,3);
	\draw[very thin, color=gray!100] (0,0) grid (5,3);		
	\draw[thick, color=red] (0,0)--(0,1)--(2,3)--(3,3)--(5,3);
\end{scope}

\begin{scope}[scale=0.2, xshift=70, yshift=600]
	\draw[fill, color=gray!25] (0,0) rectangle (5,1);
	\draw[fill, color=gray!25] (1,1) rectangle (5,2);
	\draw[fill, color=gray!25] (3,2) rectangle (5,3);
	\draw[very thin, color=gray!100] (0,0) grid (5,3);		
	\draw[thick, color=red] (0,0)--(0,1)--(1,2)--(2,2) -- (2,3) --(5,3);
\end{scope}

\begin{scope}[scale=0.2, xshift=-280, yshift=5]
	\draw[fill, color=gray!25] (0,0) rectangle (5,1);
	\draw[fill, color=gray!25] (1,1) rectangle (5,2);
	\draw[fill, color=gray!25] (3,2) rectangle (5,3);
	\draw[very thin, color=gray!100] (0,0) grid (5,3);		
	\draw[thick, color=red] (0,0)--(0,1)--(1,2)--(3,2)--(3,3)--(5,3);
\end{scope}

\begin{scope}[scale=0.2, xshift=-500, yshift=380]
	\draw[fill, color=gray!25] (0,0) rectangle (5,1);
	\draw[fill, color=gray!25] (1,1) rectangle (5,2);
	\draw[fill, color=gray!25] (3,2) rectangle (5,3);
	\draw[very thin, color=gray!100] (0,0) grid (5,3);		
	\draw[thick, color=red] (0,0)--(0,2)--(2,2)--(3,3) --(5,3);
\end{scope}

\begin{scope}[scale=0.2, xshift=-300, yshift=720]
	\draw[fill, color=gray!25] (0,0) rectangle (5,1);
	\draw[fill, color=gray!25] (1,1) rectangle (5,2);
	\draw[fill, color=gray!25] (3,2) rectangle (5,3);
	\draw[very thin, color=gray!100] (0,0) grid (5,3);		
	\draw[thick, color=red] (0,0)--(0,2)--(1,2)--(2,3) --(5,3);
\end{scope}

\begin{scope}[scale=0.2, xshift=150, yshift=990]
	\draw[fill, color=gray!25] (0,0) rectangle (5,1);
	\draw[fill, color=gray!25] (1,1) rectangle (5,2);
	\draw[fill, color=gray!25] (3,2) rectangle (5,3);
	\draw[very thin, color=gray!100] (0,0) grid (5,3);		
	\draw[thick, color=red] (0,0)--(0,2)--(1,3) --(5,3);
\end{scope}

\begin{scope}[scale=0.2, xshift=690, yshift=910]
	\draw[fill, color=gray!25] (0,0) rectangle (5,1);
	\draw[fill, color=gray!25] (1,1) rectangle (5,2);
	\draw[fill, color=gray!25] (3,2) rectangle (5,3);
	\draw[very thin, color=gray!100] (0,0) grid (5,3);		
	\draw[thick, color=red] (0,0)--(0,1)--(1,2) --(1,3) -- (5,3);
\end{scope}

\begin{scope}[scale=0.2, xshift=650, yshift=420]
	\draw[fill, color=gray!25] (0,0) rectangle (5,1);
	\draw[fill, color=gray!25] (1,1) rectangle (5,2);
	\draw[fill, color=gray!25] (3,2) rectangle (5,3);
	\draw[very thin, color=gray!100] (0,0) grid (5,3);		
	\draw[thick, color=red] (0,0)--(0,1)--(1,1) --(1,2) -- (2,3) -- (5,3);
\end{scope}

\begin{scope}[scale=0.2, xshift=210, yshift=60]
	\draw[fill, color=gray!25] (0,0) rectangle (5,1);
	\draw[fill, color=gray!25] (1,1) rectangle (5,2);
	\draw[fill, color=gray!25] (3,2) rectangle (5,3);
	\draw[very thin, color=gray!100] (0,0) grid (5,3);		
	\draw[thick, color=red] (0,0)--(0,1)--(1,1) --(1,2) -- (2,2) -- (3,3) -- (5,3);
\end{scope}

\begin{scope}[scale=0.2, xshift=30, yshift=-130]
	\draw[fill, color=gray!25] (0,0) rectangle (5,1);
	\draw[fill, color=gray!25] (1,1) rectangle (5,2);
	\draw[fill, color=gray!25] (3,2) rectangle (5,3);
	\draw[very thin, color=gray!100] (0,0) grid (5,3);		
	\draw[thick, color=red] (0,0)--(0,1)--(1,1)--(1,2)--(3,2)--(3,3)--(5,3);
\end{scope}

\begin{scope}[scale=0.2, xshift=-540, yshift=170]
	\draw[fill, color=gray!25] (0,0) rectangle (5,1);
	\draw[fill, color=gray!25] (1,1) rectangle (5,2);
	\draw[fill, color=gray!25] (3,2) rectangle (5,3);
	\draw[very thin, color=gray!100] (0,0) grid (5,3);		
	\draw[thick, color=red] (0,0)--(0,2)--(3,2)--(3,3)--(5,3);
\end{scope}

\begin{scope}[scale=0.2, xshift=-540, yshift=600]
	\draw[fill, color=gray!25] (0,0) rectangle (5,1);
	\draw[fill, color=gray!25] (1,1) rectangle (5,2);
	\draw[fill, color=gray!25] (3,2) rectangle (5,3);
	\draw[very thin, color=gray!100] (0,0) grid (5,3);		
	\draw[thick, color=red] (0,0)--(0,2)-- (2,2) --(2,3) -- (3,3) --(5,3);
\end{scope}

\begin{scope}[scale=0.2, xshift=-100, yshift=890]
	\draw[fill, color=gray!25] (0,0) rectangle (5,1);
	\draw[fill, color=gray!25] (1,1) rectangle (5,2);
	\draw[fill, color=gray!25] (3,2) rectangle (5,3);
	\draw[very thin, color=gray!100] (0,0) grid (5,3);		
	\draw[thick, color=red] (0,0)--(0,2)-- (1,2) --(1,3) -- (3,3) --(5,3);
\end{scope}

\begin{scope}[scale=0.2, xshift=460, yshift=210]
	\draw[fill, color=gray!25] (0,0) rectangle (5,1);
	\draw[fill, color=gray!25] (1,1) rectangle (5,2);
	\draw[fill, color=gray!25] (3,2) rectangle (5,3);
	\draw[very thin, color=gray!100] (0,0) grid (5,3);		
	\draw[thick, color=red] (0,0)--(0,1)-- (1,1) --(1,2) -- (2,2) -- (2,3) -- (5,3);
\end{scope}

\begin{scope}[scale=0.2, xshift=920, yshift=600]
	\draw[fill, color=gray!25] (0,0) rectangle (5,1);
	\draw[fill, color=gray!25] (1,1) rectangle (5,2);
	\draw[fill, color=gray!25] (3,2) rectangle (5,3);
	\draw[very thin, color=gray!100] (0,0) grid (5,3);		
	\draw[thick, color=red] (0,0)--(0,1)-- (1,1) --(1,3) -- (5,3);
\end{scope}

\begin{scope}[scale=0.2, xshift=520, yshift=1150]
	\draw[fill, color=gray!25] (0,0) rectangle (5,1);
	\draw[fill, color=gray!25] (1,1) rectangle (5,2);
	\draw[fill, color=gray!25] (3,2) rectangle (5,3);
	\draw[very thin, color=gray!100] (0,0) grid (5,3);		
	\draw[thick, color=red] (0,0)--(0,3)-- (5,3);
\end{scope}

\end{tikzpicture}
\caption{The $(3,5)$-associahedron with its faces indexed by $(3,5)$-Schr\"oder paths.}
\label{fig.35associahedron}
\end{figure}
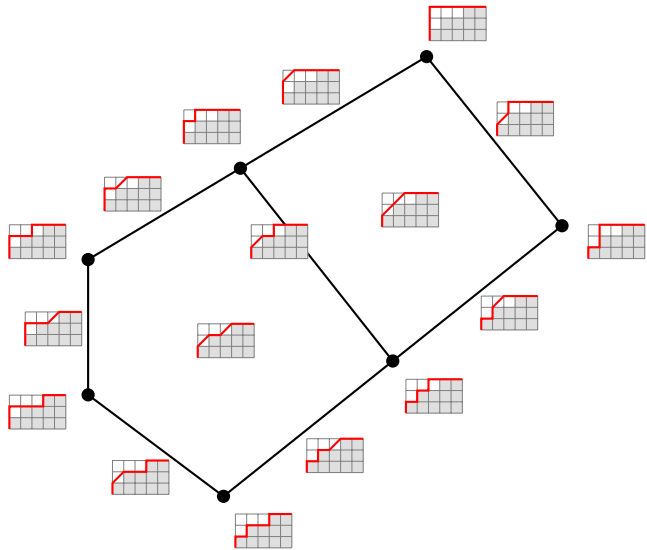

\bibliographystyle{plain}
\bibliography{schroeder}

\begin{thebibliography}{10}

\bibitem{AM06}
Marcelo Aguiar and Walter Moreira.
\newblock Combinatorics of the free {B}axter algebra.
\newblock {\em Electron. J. Combin.}, 13(1):Research Paper 17, 38, 2006.

\bibitem{ARW13}
Drew Armstrong, Brendon Rhoades, and Nathan Williams.
\newblock Rational associahedra and noncrossing partitions.
\newblock {\em Electron. J. Combin.}, 20(3):Paper 54, 27, 2013.

\bibitem{AB18}
Jean-Christophe Aval and Fran\c{c}ois Bergeron.
\newblock A note on: rectangular {S}chr\"{o}der parking functions
  combinatorics.
\newblock {\em S\'{e}m. Lothar. Combin.}, 79:Art. B79a, 13, 2018-2019.

\bibitem{BSS93}
Joseph Bonin, Louis Shapiro, and Rodica Simion.
\newblock Some {$q$}-analogues of the {S}chr\"{o}der numbers arising from
  combinatorial statistics on lattice paths.
\newblock {\em J. Statist. Plann. Inference}, 34(1):35--55, 1993.

\bibitem{CPS19}
Cesar Ceballos, Arnau Padrol, and Camilo Sarmiento.
\newblock Geometry of {$\nu$}-{T}amari lattices in types {$A$} and {$B$}.
\newblock {\em Trans. Amer. Math. Soc.}, 371(4):2575--2622, 2019.

\bibitem{CPS20}
Cesar Ceballos, Arnau Padrol, and Camilo Sarmiento.
\newblock The $\nu$-{T}amari lattice via $\nu$-trees, $\nu$-bracket vectors,
  and subword complexes.
\newblock {\em Electron. J. Comb}, 27(1), 2020.

\bibitem{Deu98}
Emeric Deutsch.
\newblock A bijection on {D}yck paths and its consequences.
\newblock {\em Discrete Math.}, 179(1-3):253--256, 1998.

\bibitem{Ges09}
Ira Gessel.
\newblock Schr\"oder numbers, large and small.
\newblock CanaDAM2009 talk slides, 2009.

\bibitem{Hag08}
James Haglund.
\newblock {\em The {$q$},{$t$}-{C}atalan numbers and the space of diagonal
  harmonics}, volume~41 of {\em University Lecture Series}.
\newblock American Mathematical Society, Providence, RI, 2008.
\newblock With an appendix on the combinatorics of Macdonald polynomials.

\bibitem{Hai84}
Mark Haiman.
\newblock Constructing the associahedron.
\newblock Unpublished manuscript, 1984.

\bibitem{Koz08}
Dmitry Kozlov.
\newblock {\em Combinatorial algebraic topology}, volume~21 of {\em Algorithms
  and Computation in Mathematics}.
\newblock Springer, Berlin, 2008.

\bibitem{Lee89}
Carl~W. Lee.
\newblock The associahedron and triangulations of the {$n$}-gon.
\newblock {\em European J. Combin.}, 10(6):551--560, 1989.

\bibitem{PV17}
Louis-Fran\c{c}ois Pr\'{e}ville-Ratelle and Xavier Viennot.
\newblock The enumeration of generalized {T}amari intervals.
\newblock {\em Trans. Amer. Math. Soc.}, 369(7):5219--5239, 2017.

\bibitem{Son05}
Chunwei Song.
\newblock The generalized {S}chr\"{o}der theory.
\newblock {\em Electron. J. Combin.}, 12:Research Paper 53, 10, 2005.

\end{thebibliography}

\end{document}